\documentclass[12pt,leqno]{article}  
\usepackage{amsmath,amssymb,bbm}
\usepackage{amsthm,array,mathdots}
\usepackage{titlesec} 
\usepackage[all,cmtip]{xy}
\usepackage{blkarray,arydshln} 

\textheight=22cm \textwidth=15.5cm \hoffset=-1cm \voffset=-1.5cm
\titlespacing{\section}{0cm}{3.5pc}{1.5pc}

\makeatletter
\def\@citex[#1]#2{\if@filesw\immediate\write\@auxout{\string\citation{#2}}\fi
  \def\@citea{}\@cite{\@for\@citeb:=#2\do
    {\@citea\def\@citea{\@citesep}\@ifundefined
       {b@\@citeb}{{\bf ?}\@warning
       {Citation `\@citeb' on page \thepage \space undefined}}%
{\csname b@\@citeb\endcsname}}}{#1}}
\def\@citesep{; }
\makeatother

\newtheoremstyle{Kang}{}{}{\itshape}{}{\bf}{}{.5em}{}
\theoremstyle{Kang}
\newtheorem{theorem}{Theorem}[section]
\renewcommand{\thetheorem}{\arabic{section}.\arabic{theorem}}
\newtheorem{lemma}[theorem]{Lemma}

\newtheorem{prop}[theorem]{Proposition}

\newtheoremstyle{Kremark}{}{}{}{}{\bf}{}{.5em}{}
\theoremstyle{Kremark}
\newtheorem*{remark}{Remark.}
\newtheorem{defn}[theorem]{Definition}

\newtheorem{other}{}

\newenvironment{Case}[1]{\medskip {\it Case #1.}}{}

\allowdisplaybreaks[1]  

\def\fn#1{\operatorname{#1}} 
\def\bm#1{\mathbbm{#1}}
\def\c#1{\mathcal{#1}}


\title{Class Numbers and Algebraic Tori}
\author{Akinari Hoshi$^{(1)}$, Ming-chang Kang$^{(2)}$ and Aiichi Yamasaki$^{(3)}$ \\[3mm]
\begin{minipage}{16cm} \begin{description} \itemsep=-1pt
\item[] $^{(1)}$Department of Mathematics, Niigata University,
Niigata, Japan,\\ E-mail: hoshi@math.sc.niigata-u.ac.jp \item[]
$^{(2)}$Department of Mathematics, National Taiwan University,
Taipei,\\ Taiwan, E-mail: kang@math.ntu.edu.tw \item[]
$^{(3)}$Department of Mathematics, Kyoto University, Kyoto,
Japan,\\ E-mail: aiichi.yamasaki@gmail.com
\end{description} \end{minipage}}
\date{}

\extrarowheight=1pt \BAextrarowheight=1pt
\begin{document}

\maketitle

\footnote{\textit{\!\!\! $2010$ Mathematics Subject
Classification}. 14E08, 11R33, 20C10, 11R29.}
\footnote{\textit{\!\!\! Keywords and phrases}. Class number,
cyclotomic fields, algebraic torus, rationality problem, integral
representation.} \footnote{\!\!\! This work was partially
supported by JSPS KAKENHI Grant Numbers 24540019, 25400027.}

\footnote{\!\!\! Parts of the work were finished when the
first-named author and the third-named author were visiting the
National Center for Theoretic Sciences (Taipei Office), whose
support is gratefully acknowledged.}

\begin{abstract}
{\noindent\bf Abstract.} Let $p$ be an odd prime number, $D_p$ be
the dihedral group of order $2p$, $h_p$ and $h^+_p$ be the class
numbers of $\bm{Q}(\zeta_p)$ and $\bm{Q}(\zeta_p+ \zeta_p^{-1})$
respectively. Theorem. $h_p^+=1$ if and only if, for any field $k$
admitting a $D_p$-extension, all the algebraic $D_p$-tori over $k$
are stably rational. A similar result for $h_p=1$ and $C_p$-tori
is valid also.
\end{abstract}

\newpage
\section{Introduction}
The initial goal of our investigation was to understand some
rationality problem of algebraic tori defined over a non-closed
field. Unexpectedly, we arrived at a result which related the
rationality problem to a  criterion of $h_p^{+}=1$ where $h_p^{+}$
is the second factor of the class number $h_p$. According to
\cite[page 420]{Wa}, it is notoriously difficult to determine
$h_p^{+}$. Before stating the main results, let's recall some
terminology first.

Let $k\subset L$ be a field extension. The field $L$ is rational
over $k$ (in short, $k$-rational) if, for some $n$, $L\simeq
k(X_1,\ldots,X_n)$, the rational function field of $n$ variables
over $k$. $L$ is called stably rational over $k$ (or, stably
$k$-rational) if the field $L(Y_1,\ldots,Y_m)$ is $k$-rational
where $Y_1,\ldots,Y_m$ are some elements algebraically independent
over $L$. When $k$ is an infinite field, $L$ is called retract
$k$-rational, if there exist an affine domain $A$ whose quotient
field is $L$ and $k$-algebra morphisms $\varphi: A\to
k[X_1,\ldots,X_n][1/f]$, $\psi: k[X_1,\ldots,X_n][1/f]\to A$
satisfying $\psi\circ\varphi=1_A$ the identity map on $A$ where
$k[X_1,\ldots,X_n]$ is a polynomial ring over $k$, $f\in
k[X_1,\ldots,X_n]\backslash \{0\}$ \cite[Definition 3.1; Ka2,
Definition 1.1]{Sa}.

It is known that ``$k$-rational" $\Rightarrow$ ``stably
$k$-rational" $\Rightarrow$ ``retract $k$-rational". Moreover, if
$k$ is an algebraic number field, retract $k$-rationality  of
$k(G)$ implies the inverse Galois problem for the field $k$ and
the group $G$ \cite [Ka2]{Sa} (see Definition \ref{d6.8} for the
definition of $k(G)$).

\medskip
Let $k$ be a field, $G$ be a finite group. We will say that the
field $k$ admits a $G$-extension if there is a Galois field
extension $K/k$ such that $G \simeq Gal(K/k)$.

Let $k$ be a non-closed field. An algebraic torus $T$ defined over
$k$ is an algebraic group such that $T\times_{\fn{Spec}(k)}
\fn{Spec}(K)\simeq \bm{G}_{m,K}^d$ for some finite separable
extension $K/k$, for some positive integer $d$ where
$\bm{G}_{m,K}$ is the 1-dimensional multiplicative group defined
over $K$; the field $K$ is called a splitting field of $T$.

\begin{defn} \label{d1.9}
Let $G$ be a finite group, $k$ be a field admitting a
$G$-extension. An algebraic torus $T$ over $k$ is called a
$G$-torus if it has a splitting field $K$ which is Galois over $k$
with $Gal(K/k) \simeq G$.
\end{defn}

An algebraic torus $T$ over $k$ is $k$-rational (resp. stably
$k$-rational, retract $k$-rational) if so is its function field
$k(T)$ over $k$.

The birational classification of algebraic tori was studied by
Voskresenskii, Endo and Miyata, Chistov, Kunyavskii,
Colliot-Th\'el\`ene and Sansuc, Klyachko, etc. For a survey, see
\cite{Vo,Ku4}.

In particular, Voskresenskii proves that all the 2-dimensional
algebraic tori are rational \cite[page 57]{Vo}, and the birational
classification of 3-dimensional algebraic tori was solved by
Kunyavskii \cite{Ku3}.

\begin{defn} \label{d1.3}
Let $n$ be a positive integer, $\zeta_n$ be a primitive $n$-th
root of unity. Let $h_n$ and $h_n^+$ be the class numbers of
$\bm{Q}(\zeta_n)$ and $\bm{Q}(\zeta_n+\zeta_n^{-1})$ respectively.
It is known that $h_n^+$ divides $h_n$ (see \cite[p.40, Theorem
4.14]{Wa}). Thus we may write $h_n=h_n^+\cdot h_n^-$ where $h_n^-$
is a positive integer. The integers $h_n^-$ and $h_n^+$ are called
the first factor and the second factor of $h_n$ respectively.
\end{defn}

\bigskip
In the sequel we denote by $C_n$ and $D_n$ the cyclic group of
order $n$ and the dihedral group of order $2n$ respectively.

\begin{theorem}[{\cite[page 64]{Vo}}] \label{t1.1}

\begin{enumerate}
\item[{\rm (1)}] Let $k$ be a field admitting a $C_p$-extension
where $p$ is a prime number with $h_p =1$. Then all the $C_p$-tori
defined over $k$ are $k$-rational. \item[{\rm (2)}] Let $k$ be a
field admitting a $C_4$-extension. Then all the $C_4$-tori over
$k$ are $k$-rational.
\end{enumerate}
\end{theorem}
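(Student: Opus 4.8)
\medskip
The plan is to translate the rationality question into the theory of $G$-lattices and reduce it to finitely many indecomposable building blocks. Recall the standard anti-equivalence $T \mapsto \widehat{T}$ between algebraic $k$-tori split by a fixed Galois extension $K/k$ and $\bm{Z}[G]$-lattices, where $G = Gal(K/k)$ and $\widehat{T}$ is the character lattice; a direct sum decomposition $\widehat{T} = M_1 \oplus M_2$ corresponds to a product $T = T_1 \times_k T_2$. First I would record the elementary fact that a product of $k$-rational tori is $k$-rational: if $T_2$ is $k$-rational then $T_2 \times_k \fn{Spec}\, k(T_1)$ is $k(T_1)$-rational, so $k(T_1 \times_k T_2) = k(T_1)(T_2)$ is purely transcendental over $k(T_1)$, and if $T_1$ is $k$-rational the total field is purely transcendental over $k$. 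Since every $\bm{Z}[G]$-lattice is a finite direct sum of indecomposables (existence of such a decomposition, not its uniqueness, is all that is needed), it suffices to prove $k$-rationality for the torus attached to each indecomposable $\bm{Z}[G]$-lattice.

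For part (1), take $G = C_p = \langle \sigma \rangle$ with $h_p = 1$. Here I would invoke the Diederichsen--Reiner classification: the number of indecomposable $\bm{Z}[C_p]$-lattices is $2h_p + 1$, so when $h_p = 1$ there are exactly three, namely the trivial lattice $\bm{Z}$, the group ring $\bm{Z}[C_p]$ (a nonsplit extension of $\bm{Z}$ by the augmentation ideal, hence indecomposable), and the rank $(p-1)$ lattice $\bm{Z}[\zeta_p] \cong I_{C_p}$, where $I_{C_p}$ is the augmentation ideal. The attached tori are the split torus $\bm{G}_m$, which is $k$-rational; the quasi-split torus $\fn{Res}_{K/k}\,\bm{G}_m$, which is an open subvariety of $\fn{Res}_{K/k}\,\bm{A}^1 \cong \bm{A}^p_k$ and therefore $k$-rational; and the torus $T_0$ with character lattice $\bm{Z}[\zeta_p]$. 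Dualizing the augmentation sequence $0 \to I_{C_p} \to \bm{Z}[C_p] \to \bm{Z} \to 0$ identifies $T_0$ with the quotient $\fn{Res}_{K/k}\,\bm{G}_m / \bm{G}_m$ by the central $\bm{G}_m$; choosing a $k$-basis of $K$ gives coordinates $x_1, \dots, x_p$ on which $\bm{G}_m$ acts by simultaneous scaling, so the invariant field is $k(x_2/x_1, \dots, x_p/x_1)$, purely transcendental of transcendence degree $p-1 = \dim T_0$. Hence $T_0$ is $k$-rational and part (1) follows. (Equivalently, the dual sequence $0 \to \bm{Z} \xrightarrow{N} \bm{Z}[C_p] \to \bm{Z}[\zeta_p] \to 0$, with $N = 1 + \sigma + \dots + \sigma^{p-1}$, identifies $T_0$ with the norm-one torus $R^{(1)}_{K/k}\bm{G}_m$, and Hilbert 90 gives the same parametrization.)

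For part (2), take $G = C_4$. The same reduction applies, and $\bm{Z}[C_4]$ is of finite lattice type with its indecomposable lattices explicitly classified, so only finitely many blocks must be checked. I would sort them by rank. Every indecomposable of rank $\le 2$ yields a torus of dimension $\le 2$, hence $k$-rational by Voskresenskii's theorem that all tori of dimension $\le 2$ are rational; in particular the faithful rank-$2$ lattice $\bm{Z}[i]$ (on which $\sigma$ acts as $i$) and the pulled-back regular $\bm{Z}[C_2]$-lattice are disposed of immediately. The rank-$4$ block $\bm{Z}[C_4]$ gives the Weil restriction $\fn{Res}_{K/k}\,\bm{G}_m$, which is $k$-rational. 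The only remaining blocks are the rank-$3$ indecomposables, glued from the trivial, the sign, and the faithful $\bm{Z}[i]$ pieces along the prime $2$; for each I would exhibit an explicit rational slice, again of $\bm{G}_m$-quotient-of-Weil-restriction type, to obtain $k$-rationality.

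The main obstacle, and the reason one cannot finish with a one-line cohomological computation, is that the conclusion asks for genuine $k$-rationality rather than stable or retract $k$-rationality. The flasque-resolution machinery of Endo--Miyata and Colliot-Th\'el\`ene--Sansuc detects only the flasque class and so delivers the weaker notions; for true rationality each building block must be given a concrete birational parametrization. For $C_p$ this is painless because there are only three blocks and the nontrivial one is a transparent $\bm{G}_m$-quotient. For $C_4$ the real work is the explicit treatment of the rank-$3$ glued lattices, where one must verify that the resulting $3$-dimensional tori fall among the rational ones in Kunyavskii's birational classification rather than among the exceptional non-rational examples; this case analysis is where I expect the effort to concentrate.
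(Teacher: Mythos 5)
Your part (1) is correct and complete. It reaches the same three building blocks as the paper (by Diederichsen--Reiner with $h_p=1$: $\bm{Z}$, $\bm{Z}[C_p]$, $\bm{Z}[\zeta_p]$), but disposes of $\bm{Z}[\zeta_p]$ geometrically, as $\fn{Res}_{K/k}\bm{G}_m/\bm{G}_m$ with invariant field generated by the ratios $x_i/x_1$. The paper proceeds differently: it first replaces $M$ by $M_0=\{x\in M:(\sum_{g\in G}g)\cdot x=0\}$, using Proposition \ref{p6.2} and Theorem \ref{t6.1} to show $K(M)^G$ is purely transcendental over $K(M_0)^G$, then observes $M_0\simeq \bm{Z}[\zeta_p]^{(m)}$ and linearizes the monomial action via the substitution $y_0=1+x_1+x_1x_2+\cdots+x_1\cdots x_{p-1}$, $y_i=x_1\cdots x_{i-1}/y_0$ --- which is exactly your passage to projective ratios in field-theoretic form. (One small blemish: ``nonsplit extension, hence indecomposable'' is not a valid inference in general, though indecomposability of $\bm{Z}[C_p]$ is part of the cited classification and is not actually needed for your argument.)

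Part (2), however, has a genuine gap, and you name it yourself: the rank-$3$ indecomposables are never treated. ``For each I would exhibit an explicit rational slice'' is a declaration of intent, not a proof, and those cases are precisely where the content of the theorem lies. Moreover your enumeration of blocks is asserted rather than checked against the classification: over $\bm{Z}[C_4]$ there exist nonsplit extensions of $\bm{Z}[C_4/\langle\sigma^2\rangle]$ by $\bm{Z}[i]$ (by Shapiro, $\fn{Ext}^1_{\bm{Z}[C_4]}(\fn{Ind}_{\langle\sigma^2\rangle}^{C_4}\bm{Z},\bm{Z}[i])\simeq H^1(\langle\sigma^2\rangle,\bm{Z}[i])\neq 0$), so whether $\bm{Z}[C_4]$ is the only rank-$4$ block must be read off from the list in Voskresenskii that you invoke but do not state. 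The paper's norm-kernel reduction sidesteps all of this: since any direct summand of $M_0$ is killed by $1+\sigma+\sigma^2+\sigma^3$, only three indecomposables survive --- $\bm{Z}_-$ (linearized by $y=(1+x)/(1-x)$, or $1/(1+x)$ in characteristic $2$), $\bm{Z}[i]$ (rank $2$, Voskresenskii), and the augmentation ideal (rank $3$, linearized by the same substitution as in the $C_p$ case). In particular the awkward nonsplit rank-$3$ lattices glued from $\bm{Z}[i]$ and $\bm{Z}$ or $\bm{Z}_-$ never arise, and no appeal to Kunyavskii's three-dimensional classification is needed for $C_4$. To complete your route you would have to either carry out the deferred rank-$3$ (and possible extra rank-$4$) parametrizations, or adopt the reduction to $M_0$.
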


\begin{theorem} \label{t1.2}
\begin{enumerate}
\item[{\rm (1)}] {\rm (Kunyavskii \cite{Ku1})} Let $G=C_2\times
C_2$, the Klein four group, and $k$ be a field admitting a
$G$-extension. Then a $G$-tori over $k$ is stably $k$-rational if
and only if it is $k$-rational. \item[{\rm (2)}] {\rm (Kunyavskii
\cite{Ku2})} Let $S_3$ be the symmetric group of degree 3, and $k$
be a field admitting an $S_3$-extension. Then all the $S_3$-tori
over $k$ are $k$-rational.
\end{enumerate}
\end{theorem}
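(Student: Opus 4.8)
The plan is to pass from tori to lattices. If $T$ is a $k$-torus split by a Galois extension $K/k$ with $\mathrm{Gal}(K/k)\simeq G$, its character module $\widehat T=\mathrm{Hom}(T_{K},\mathbb{G}_{m})$ is a $G$-lattice, and $T\mapsto\widehat T$ is a contravariant equivalence between $k$-tori split by $K$ and $G$-lattices, with $\dim T=\mathrm{rank}\,\widehat T$; because $K/k$ is a genuine $G$-extension the action on $\widehat T$ is faithful. Following Voskresenskii and Colliot-Th\'el\`ene--Sansuc I would fix a flasque resolution $0\to\widehat T\to P\to F\to 0$ with $P$ permutation and $F$ flasque, and use the two criteria that $T$ is stably $k$-rational iff $F$ is stably permutation (that is, $F\oplus P_{1}\simeq P_{2}$ for permutation lattices $P_{1},P_{2}$), and $T$ is retract $k$-rational iff $F$ is invertible. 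These criteria depend only on $\widehat T$ as a $G$-lattice and not on $k$, so the whole statement reduces to lattice arithmetic for the two groups $G=C_{2}\times C_{2}$ and $G=S_{3}$, uniformly over all admissible base fields.

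For part (2) I would exploit that $\mathbb{Z}[S_{3}]$ has finite representation type: the indecomposable $S_{3}$-lattices form a short explicit list. Since the Sylow subgroups of $S_{3}$ are cyclic, the Endo--Miyata criterion already gives that every flasque $S_{3}$-lattice is invertible, so every $S_{3}$-torus is at least retract rational; running through the finite list I would check that the flasque part $F$ of each resolution is in fact stably permutation, upgrading this to stable rationality. The remaining task is to promote stable rationality to genuine rationality. Here I would proceed by rank: $S_{3}$-tori of dimension $\le 2$ are rational by Voskresenskii, and for the finitely many higher-rank indecomposables I would use their explicit presentation to realize $\widehat T$ as built from permutation lattices by short exact sequences and then eliminate coordinates of $k(T)$ one at a time through the induced multiplicative/norm relations, exhibiting $k(T)$ as a rational function field.

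For part (1) only the implication stably rational $\Rightarrow$ rational has content, the converse being automatic. Here $\mathbb{Z}[C_{2}\times C_{2}]$ has infinite representation type, so enumeration is unavailable and I would instead work with the flasque class directly: the flasque class group of $C_{2}\times C_{2}$ is a small finite group computable from the Tate cohomology $\widehat H^{-1}$ of its three subgroups of order two, and ``$F$ stably permutation'' is exactly the vanishing of this class. I would classify $C_{2}\times C_{2}$-lattices up to this invariant into a short list of normal forms (not all such tori are stably rational, the biquadratic norm-one torus being the classical counterexample), and for each normal form whose flasque class vanishes I would write down an explicit rational parametrization of $k(T)$, thereby closing the gap between stable and genuine rationality for this group.

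The hard part in both cases is the final passage from stable to genuine rationality: the flasque class pins down stable and retract rationality exactly, but rationality itself admits no purely lattice-theoretic criterion, so one must produce concrete birational isomorphisms with affine space that are valid over every $k$ admitting the extension. The obstruction is sharpest for $C_{2}\times C_{2}$, where stably-rational-but-not-rational behaviour is a priori possible and must be excluded, and where the unbounded rank of $\widehat T$ forces the explicit elimination to be organized structurally rather than case by case.
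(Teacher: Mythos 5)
Your general framework---passing to the character lattice, flasque resolutions, Voskresenskii's theorem for tori of dimension at most $2$---is the same as the paper's, and the genuinely hard step is exactly where you locate it: upgrading stable rationality to rationality. But the plan as written does not close that step, and for $G=C_2\times C_2$ it would fail. Classifying lattices \emph{up to the flasque-class invariant} cannot yield rationality conclusions: the flasque class controls only stable and retract rationality, and since $\bm{Z}[C_2\times C_2]$ has infinite representation type there is no short list of normal forms for the lattices themselves. The device the paper uses to tame the unbounded rank is the anisotropic sublattice $M_0=\{x\in M:(\sum_{g\in G}g)\cdot x=0\}$: one shows $K(M)^G=K(M_0)^G(x_1,\ldots,x_m)$ with the $x_i$ fixed by $G$ (Proposition \ref{p6.2}), and the indecomposable $G$-lattices annihilated by the norm element \emph{do} form a finite list (Kunyavskii's enumeration). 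All of these have rank $\le 2$ except the augmentation kernel $N_1$ and its dual $N_2=\fn{Hom}(N_1,\bm{Z})$; $K(N_1)^G$ is $k$-rational while $K(N_2)^G$ is not even retract $k$-rational, so the hypothesis of stable rationality is used precisely to exclude $N_2$ as a direct summand of $M_0$, after which every summand is rational. Your proposal names the biquadratic norm-one torus as the classical counterexample but never makes this exclusion argument, which is the actual content of part (1).

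For $S_3\simeq D_3$ the same reduction to $M_0$ applies, and by Lee's classification (with $h_3^+=1$) one gets $M_0\simeq X^{(s_0)}\oplus R^{(s_1)}\oplus P^{(s_2)}\oplus\bm{Z}_-^{(t)}$. The summands $\bm{Z}_-$, $R$, $P$ are handled by linearization and Voskresenskii's rank-$2$ theorem, as you propose; but the rank-$3$ lattice $X$ sits in a non-split extension $0\to R\to X\to\bm{Z}_-\to 0$ whose quotient is \emph{not} a permutation lattice, so the coordinate-elimination lemma (Theorem \ref{t6.1}/Proposition \ref{p6.2}) does not apply and no straightforward ``norm relation'' elimination is available. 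The paper invokes Kunyavskii's classification of three-dimensional tori to conclude that $K(X)^G$ is $k$-rational; without that input (or an explicit birational map in its place) your argument for part (2) is incomplete at exactly the one lattice where the claim is not routine.
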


The main results of this paper are about the stable rationality of
$D_p$-tori.

\begin{theorem} \label{t1.4}
Let $p$ be an odd prime number, and $k$ be a field admitting a
$D_p$-extension. Then $h_p^+=1$ if and only if all the $D_p$-tori
over $k$ are stably $k$-rational.
\end{theorem}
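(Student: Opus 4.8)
The plan is to translate the geometric problem about stable rationality of $D_p$-tori into a purely arithmetic/combinatorial statement about lattices (integral representations) via the standard dictionary of the theory of algebraic tori, and then to identify the relevant cohomological invariant with the class number $h_p^+$.

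Let me recall the standard theory. An algebraic torus $T$ over $k$ with splitting field $K$ and Galois group $G = \mathrm{Gal}(K/k)$ corresponds to its character lattice $\hat{T} = X^*(T)$, which is a $\mathbb{Z}[G]$-lattice (a $G$-lattice, i.e. a finitely generated $\mathbb{Z}$-free module with a $G$-action). A $G$-torus in the sense of Definition 1.9 is one whose splitting field has Galois group exactly $G$, so its character lattice is a faithful $G$-lattice of rank equal to $\dim T$. The key invariant is the following. By work of Endo–Miyata, Colliot-Thélène–Sansuc, and Voskresenskii:
- retract rationality of $T$ is governed by whether $\hat{T}$ is an invertible (i.e. a direct summand of a permutation) $G$-lattice, equivalently by the vanishing of a certain flasque class;
- stable rationality of $T$ is equivalent to the flasque class $[\hat{T}]^{fl}$ being trivial, i.e. the torus being "stably permutation" / the flasque resolution having flasque part stably permutation.

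So the plan is:

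First, I would set up the flasque/coflasque resolution machinery. For a $G$-lattice $M$ one has an exact sequence $0 \to M \to P \to F \to 0$ with $P$ permutation and $F$ flasque (or the dual coflasque version); the class $[F]$ in the monoid of flasque lattices modulo permutation lattices is the fundamental birational invariant. The theorem of Endo–Miyata / Colliot-Thélène–Sansuc states that $T$ is stably $k$-rational iff $[F]$ is "stably permutation" (zero in the appropriate quotient), and retract $k$-rational iff $F$ is invertible. Since the statement quantifies over ALL $D_p$-tori over $k$ and over all fields $k$ admitting a $D_p$-extension, the geometric question becomes: are all faithful $D_p$-lattices stably permutation (equivalently, is every flasque $D_p$-lattice stably permutation)?

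Second — and this is where the number theory enters — I would compute the relevant invariant for $G = D_p$. The group ring $\mathbb{Z}[D_p]$ and its lattices are controlled by the arithmetic of $\mathbb{Z}[\zeta_p]$ and its maximal real subring $\mathbb{Z}[\zeta_p + \zeta_p^{-1}]$, precisely because the $p$-dimensional reflection/rational representation of $D_p$ decomposes using these rings. The classification of $D_p$-lattices (due to work going back to Lee, Reiner, and in this dihedral setting to Endo–Miyata and the integral representation theory of dihedral groups) shows that the obstruction group — the quotient of flasque classes modulo stably permutation ones — is naturally isomorphic to an ideal class group, and specifically to the class group of $\mathbb{Z}[\zeta_p + \zeta_p^{-1}]$, whose order is $h_p^+$. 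The heart of the argument is therefore to show: \emph{every faithful $D_p$-lattice is stably permutation (over $\mathbb{Z}[D_p]$) if and only if $h_p^+ = 1$.} The forward direction ("$h_p^+=1 \Rightarrow$ all stably rational") amounts to showing that when the real cyclotomic class group is trivial, the flasque part of any $D_p$-lattice resolution is forced to be a permutation lattice up to stabilization; the reverse direction requires \emph{constructing}, when $h_p^+ > 1$, an explicit $D_p$-torus whose character lattice has nontrivial flasque class, i.e. exhibiting a faithful $D_p$-lattice that is flasque but not stably permutation, built from a nonprincipal ideal of $\mathbb{Z}[\zeta_p + \zeta_p^{-1}]$.

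The main obstacle, I expect, is this second direction and the precise identification of the invariant. Two technical points need care. (a) One must pass from the class group of $\mathbb{Z}[\zeta_p + \zeta_p^{-1}]$ (or a Picard group / a $\widehat{H}^{-1}$ or $\widehat{H}^1$ Tate-cohomology computation) to the monoid of flasque $D_p$-lattices modulo stably permutation, checking that the map is an \emph{isomorphism} and not merely a surjection, and that stabilization does not kill the class — this is where the delicate comparison between "stably permutation" and "permutation" and the use of Tate cohomology $\widehat{H}^{-1}(G, F)$ for all subgroups $G$ lives. (b) One must verify that the field-of-definition hypothesis (any $k$ admitting a $D_p$-extension) does not restrict which lattices are realizable: given a faithful $D_p$-lattice $M$, I would construct an actual torus over $k$ with character module $M$ using the fixed $D_p$-extension $K/k$, so that the purely lattice-theoretic obstruction is genuinely realized geometrically over every such $k$. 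A further subtlety is handling the non-faithful or imprimitive sublattices (the action factoring through $C_2$ or the trivial quotient), since restriction to the subgroups $C_p$ and $C_2$ of $D_p$ produces the cyclotomic field $\mathbb{Q}(\zeta_p)$ and its real subfield, and one must check the flasque-class computation is compatible with these restrictions and isolates exactly the $h_p^+$ contribution rather than the full $h_p$. Assembling these — the resolution formalism, the integral representation theory of $D_p$, and the identification of the class-group obstruction with $h_p^+$ — yields the equivalence.
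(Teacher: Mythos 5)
Your strategic outline matches the paper's: reduce to the question of whether every flabby $D_p$-lattice is stably permutation via Theorem \ref{t2.6}, bring in the classification of indecomposable $D_p$-lattices, and locate the obstruction in the class group of $\bm{Z}[\zeta_p+\zeta_p^{-1}]$. But as written the proposal has two genuine gaps, which are exactly the two places where the real work of the paper happens. (Also, a small slip: ``are all faithful $D_p$-lattices stably permutation'' is not equivalent to ``is every flasque $D_p$-lattice stably permutation'' --- the former is false even for $h_p^+=1$, since e.g.\ $\bm{Z}_-$, $R$, $P$, $X$ are not even flabby; only the parenthetical version is the right question.)

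First, for the direction $h_p^+=1\Rightarrow$ stably rational, you assert that ``the obstruction group \dots is naturally isomorphic to the class group of $\bm{Z}[\zeta_p+\zeta_p^{-1}]$'' as though this drops out of the classification of $D_p$-lattices. It does not. Lee's list (Theorem \ref{t4.1}) gives the indecomposables; one must then (i) decide which are flabby (Proposition \ref{p4.8}), and (ii) prove that the flabby ones in the principal-ideal case --- in the paper's notation $V$, $Y_0$, $Y_1$, $Y_2$ --- are stably permutation. Step (ii) is not formal: $Y_0$ and $Y_1$ are \emph{not} permutation, and showing they are stably permutation requires exhibiting the explicit isomorphisms $\widetilde{M}_+\oplus\bm{Z}\simeq\bm{Z}[G/\langle\sigma\rangle]\oplus\bm{Z}[G/\langle\tau\rangle]$ and $\widetilde{M}_-\oplus\bm{Z}[G/\langle\tau\rangle]\simeq\bm{Z}[G]\oplus\bm{Z}$ (Theorems \ref{t3.4}, \ref{t3.5}), i.e.\ concrete failures of Krull--Schmidt--Azumaya found by direct construction. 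Nothing in your plan produces these; the alternative is to invoke Endo--Miyata's computation $T(D_p)\simeq C(\Omega_{\bm{Z}[D_p]})$ (the route of the appendix), but that is a substantial cited theorem, not a consequence of the resolution formalism. Second, for the converse you correctly say one must show stabilization does not kill the class of a lattice built from a nonprincipal ideal $\c{A}\subset\bm{Z}[\zeta_p+\zeta_p^{-1}]$, and you flag this as delicate, but you do not supply an invariant that does the job. The paper's device is the Steinitz class of the \emph{restriction to $C_p$}: every permutation $C_p$-lattice has trivial Steinitz class, the class is multiplicative in short exact sequences, and the natural map $C(\bm{Z}[\zeta_p+\zeta_p^{-1}])\to C(\bm{Z}[\zeta_p])$ is injective (Washington, Theorem 4.14), so $cl(V_{\c{A}})=[R\c{A}]\neq[R]$ obstructs $[V_{\c{A}}]^{fl}$ from being permutation. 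Without this (or an equivalent) invariant, and without the injectivity statement isolating $h_p^+$ inside $h_p$, the converse direction remains unproved.
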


According to Washington \cite[p.420]{Wa}, the calculation of
$h_p^+$ is rather sophiscated. It is known that $h_p^+=1$ if $p\le
67$; if the generalized Riemann hypothesis is assumed, then
$h_p^+=1$ if $p\le 157$, and $h_{163}^+=4$ \cite[page 421]{Wa}.

The proof of Theorem \ref{t1.4} will be given in Theorem
\ref{t4.9} and Theorem \ref{t5.4}. Using the same idea of the
proof of Theorem \ref{t5.4}, we may provide a supplement to Part
(1) of Theorem \ref{t1.1} as follows (see Theorem \ref{t6.6}).

\begin{theorem} \label{t1.5}
Let $p$ be a prime number, and $k$ be a field admitting a
$C_p$-extension. Then $h_p=1$ if and only if all the $C_p$-tori
over $k$ are $k$-rational (resp. stably $k$-rational).
\end{theorem}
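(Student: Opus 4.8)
The plan is to pass from tori to lattices. For $G=C_p$ and the given Galois extension $K/k$ with $\fn{Gal}(K/k)\cong G$, there is the standard anti-equivalence between $k$-tori split by $K$ and $\bm{Z}[G]$-lattices, sending $T$ to its character group $M:=\hat T$; a $C_p$-torus in the sense of Definition \ref{d1.9} is one for which $G$ acts faithfully on $M$. Under this dictionary the Voskresenskii--Endo--Miyata criterion identifies stable $k$-rationality of $T$ with $M$ being \emph{quasi-permutation}, i.e. fitting into an exact sequence $0\to M\to P\to Q\to 0$ of $G$-lattices with $P,Q$ permutation. The implication ``$h_p=1\Rightarrow$ all $C_p$-tori are $k$-rational, hence stably $k$-rational'' is Theorem \ref{t1.1}(1) followed by ``$k$-rational $\Rightarrow$ stably $k$-rational'' from the introduction, and needs no lattice theory. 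The whole remaining content is the reverse implication, which is where the criterion enters and which I would prove in contrapositive form: if $h_p>1$, then some $C_p$-torus over $k$ is not stably $k$-rational.

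So assume $h_p>1$, i.e. $R:=\bm{Z}[\zeta_p]$ is not a principal ideal domain, and fix a non-principal ideal $\mathfrak a\subseteq R$. Writing $G=\langle\sigma\rangle$ and letting $\sigma$ act on $R$ as multiplication by $\zeta_p$ makes $R$, and hence the sublattice $\mathfrak a$, a $\bm{Z}[G]$-lattice of rank $p-1$ with $\mathfrak a\otimes\bm{Q}\cong\bm{Q}(\zeta_p)$; this representation is faithful, so $\mathfrak a$ is the character lattice of an honest $C_p$-torus $T$ over $k$ with splitting field $K$. It thus suffices to prove that $\mathfrak a$ is not quasi-permutation when $\mathfrak a$ is non-principal. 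Note this is sharp: a principal $\mathfrak a\cong R$ \emph{is} quasi-permutation, since $0\to\bm{Z}\to\bm{Z}[G]\to R\to 0$ (with $1\mapsto\sum_{i}\sigma^{i}$, using $\bm{Z}[G]/(\sum_i\sigma^i)\cong R$) places $R$ between two permutation lattices. For orientation, the Diederichsen--Reiner classification shows the faithful indecomposable $C_p$-lattices of rank $p-1$ are exactly the ideals of $R$, indexed by the $h_p$ ideal classes, so this one family already detects the full class group.

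For the crux, suppose toward a contradiction that $0\to\mathfrak a\to P\to Q\to 0$ with $P,Q$ permutation. As the only subgroups of $C_p$ are $1$ and $C_p$, both $P$ and $Q$ are direct sums of copies of the trivial lattice $\bm{Z}$ and the regular lattice $\bm{Z}[G]$. I would now invert $p$ and apply the idempotent $e=1-\tfrac1p\sum_{i}\sigma^{i}$, which realizes the projection of $\bm{Z}[1/p][G]\cong\bm{Z}[1/p]\times R[1/p]$ onto its $R[1/p]$-factor: $e$ annihilates every trivial summand, sends each $\bm{Z}[G]$ to $R[1/p]$, and fixes $\mathfrak a[1/p]$. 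Since multiplication by an idempotent is exact, this produces an exact sequence of finitely generated projective $R[1/p]$-modules $0\to\mathfrak a[1/p]\to R[1/p]^{t}\to R[1/p]^{t'}\to 0$, in which all the trivial flanking summands have vanished and only free modules remain. Comparing Steinitz classes (top exterior powers), which are multiplicative in short exact sequences of projectives over the Dedekind domain $R[1/p]$, forces $[\mathfrak a]=0$ in $\fn{Cl}(R[1/p])$; and $\fn{Cl}(R[1/p])\cong\fn{Cl}(R)$ because the sole prime of $R$ over $p$ is $(1-\zeta_p)$, which is principal, so inverting $p$ destroys no ideal classes. Hence $\mathfrak a$ would be principal, contrary to assumption. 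Therefore $\mathfrak a$ is not quasi-permutation and $T$ is not stably $k$-rational.

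The step I expect to carry the real weight is this last localisation-and-determinant computation; once it is in place, the theorem follows from the torus--lattice dictionary and Theorem \ref{t1.1}. The one point requiring care is the precise convention in the quasi-permutation criterion (whether one must work with $\hat T$ or its $\bm{Z}$-dual). This is harmless here: the dual of $\mathfrak a$ is again a rank-one $R$-lattice, of class $-[\mathfrak a]\neq 0$, so the determinant obstruction---and hence the failure of stable rationality---persists under either convention.
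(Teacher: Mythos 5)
Your proof is correct, and its overall strategy coincides with the paper's: one direction is delegated to Theorem \ref{t1.1}(1), and for the other you realize a non-principal ideal $\mathfrak{a}\subset\bm{Z}[\zeta_p]$ as a $C_p$-lattice and show it cannot be quasi-permutation by a Steinitz-class (determinant) obstruction, exactly as in the proof of Theorem \ref{t6.6}(2). The one genuine difference is how the obstruction is extracted. The paper invokes Swan's Steinitz class for arbitrary $\bm{Z}[C_p]$-lattices (Definition \ref{d5.2}, Lemma \ref{l5.3}), whose multiplicativity in short exact sequences of not-necessarily-projective lattices is a nontrivial input from \cite{Sw1}; you instead invert $p$, split $\bm{Z}[1/p][C_p]\cong\bm{Z}[1/p]\times\bm{Z}[1/p][\zeta_p]$ by the central idempotent $1-\tfrac1p\sum_i\sigma^i$, and reduce to the classical Steinitz class of projective modules over the Dedekind domain $\bm{Z}[1/p][\zeta_p]$, where multiplicativity of the top exterior power is elementary. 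The price is the extra (easy) observation that $C(\bm{Z}[1/p][\zeta_p])\cong C(\bm{Z}[\zeta_p])$ because the unique prime $(1-\zeta_p)$ above $p$ is principal; the gain is a self-contained argument that does not need the Steinitz-class formalism of Section 5. Both computations detect the same invariant, and your treatment of the ``rational vs.\ stably rational'' readings of the statement is complete.
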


The main idea of our proof is to reduce the rationality problem of
a $G$-torus over a field $k$ to that of its function field. If $M$
is the character module of $T$, i.e.\ $M=\fn{Hom}(T\otimes_k
K,\bm{G}_{m,K})$ , then the function field of $T$ is $K(M)^G$
where $K/k$ is a Galois extension with $Gal(K/k) \simeq G$ and $M$
is a $G$-lattice from its definition (see \cite[page 36]{Sw2}). On
the other hand, by Theorem \ref{t2.6}, the fixed field $K(M)^G$ is
stably rational over $k$ if and only if the flabby class of $M$,
$[M]^{fl}$, is a permutation lattice. Thus it suffices to study
which flabby lattices are stably permutation. The proof consists
of two ingredients: the classification of integral representations
of $D_p$ and the Steinitz class of an integral representation of
$C_p$.

Theorem \ref{t3.4} and Theorem \ref{t3.5} show that the
Krull-Schmidt-Azumaya Theorem fails in the case of integral
representations \cite [page 128]{CR}. The failure is not so
desperate at first sight; on the contrary, it becomes a crucial
step in proving some $D_p$-lattices are stably permutation. Such a
phenomenon was observed for the case $p=3$ and $p=5$ when we
studied lower-rank lattices \cite{HY}. By painstaking computer
experiments, we are led to the most general case, which is
recorded in Theorem \ref{t3.4}, Theorem \ref{t3.5} and Theorem
\ref{t3.6}. The final result is summarized in Theorem \ref{t4.11}.

On the other hand, we use the Steinitz class to detect whether a
flabby lattice is stably permutation or not, when we regard a
$D_p$-lattice as a $C_p$-lattice by restriction (for the Steinitz
class, see Definition \ref{d5.2}). Thus the proof of Theorem
\ref{t1.4} is finished.

After a preprint of this article was posted in arXiv, Shizuo Endo
kindly informed us that he had another proof of Theoem \ref{t1.4}
by applying Theoem 3.3 of his joint paper with Miyata \cite{EM},
which is included in the appendix of this paper.

\bigskip
This paper is organized as follows. Section 2 contains the
preliminaries of $G$-lattices and the flabby class monoid of
$G$-lattices. In Section 3 we construct six $D_n$-lattices where
$n$ is an odd integer. Then we show that the rank $n+1$ lattices
are stably permutation. When $n=p$ is an odd prime number, these
lattices play an important role in Section 4. Section 4 begins
with the classification of indecomposable $D_p$-lattices due to
Myrna Pike Lee in \cite[page 752]{Le,CR}. Then we prove that, if
$h_p^+=1$, all the flabby $D_p$-lattices are stably permutation.
This finishes the proof of one direction of Theorem \ref{t1.4}. In
Section 5 we recall Diederichsen-Reiner's Theorem of integral
representations of $C_p$ and their invariants \cite[page
729]{Di,Re,CR}. Then we show that, the flabby lattices constructed
in Section 3 are the only indecomposable flabby $D_p$-lattices
which are stably permutation. Thus the proof of another direction
of Theorem \ref{t1.4} is finished. In the last section Theorem
\ref{t1.5} is proved.

\medskip
Terminology and notations.
In this paper, $C_n$ and $D_n$ denote the cyclic group of order $n$ and the dihedral group of order $2n$ respectively.
$\zeta_n$ denotes a primitive $n$-th root of unity.
$h_n$ and $h_n^+$ denote the class numbers of the $n$-th cyclotomic field $\bm{Q}(\zeta_n)$ and its real subfield $\bm{Q}(\zeta_n+\zeta_n^{-1})$ respectively.

If $R$ is a Dedekind domain, recall the definition of the (ideal)
class group $C(R)$. Let $Div(R)$ denote the set of all non-zero
fraction ideals of $R$, which is a group under the multiplication
of ideals; let $Prin(R)$ be the subgroup of $Div(R)$ consisting of
principal ideals. The class group $C(R)$ is the quotient group
$Div(R)/Prin(R)$; if $I$ is a fractional ideal, $[I]$ denotes the
image of $I$ in $C(R)$.

\section{\boldmath Preliminaries of $G$-lattices}

Throughout this section, $k$ is a field.

Let $\Gamma_k=\fn{Gal}(k_{\fn{sep}}/k)$. A $\Gamma_k$-lattice $M$
is a free abelian group of finite rank on which $\Gamma_k$ acts
continuously. It is known that the category of algebraic tori
defined over $k$ is anti-equivalent to the category of
$\Gamma_k$-lattices \cite[page 27; Sw2, page 36]{Vo}. If $T$ is a
torus, the $\Gamma_k$-lattice corresponding to $T$ is its
character module $M:=\fn{Hom}(T,\bm{G}_{m,k_{\fn{sep}}})$. Let
$\Gamma_0$ be an open subgroup of $\Gamma_k$ such that $\Gamma_0$
acts trivially on $M$. Consider $M$ as a $G$-lattice where
$G=\Gamma_k/\Gamma_0$ is a finite group. Thus we are led to the
following formulation.

Let $G$ be a finite group. Recall that a finitely generated
$\bm{Z}[G]$-module $M$ is called a $G$-lattice if it is
torsion-free as an abelian group. We define
$\fn{rank}_{\bm{Z}}M=n$ if $M$ is a free abelian group of
$\fn{rank} n$.

A $G$-lattice $M$ is called a permutation lattice if $M$ has a $\bm{Z}$-basis permuted by $G$.
A $G$-lattice $M$ is called stably permutation if $M\oplus P$ is a permutation lattice where $P$ is some permutation lattice.
$M$ is called an invertible lattice if it is a direct summand of some permutation lattice.
A $G$-lattice $M$ is called a flabby lattice if $H^{-1}(S,M)=0$ for any subgroup $S$ of $G$;
it is called coflabby if $H^1(S,M)=0$ for any subgroup $S$ of $G$.
For details, see \cite{CTS,Sw2,Lo}.

\medskip
Let $G$ be a finite group.
Two $G$-lattices $M_1$ and $M_2$ are similar,
denoted by $M_1\sim M_2$, if $M_1\oplus P_1 \simeq M_2 \oplus P_2$ for some permutation $G$-lattices $P_1$ and $P_2$.
The flabby class monoid $F_G$ is the class of all flabby $G$-lattices under the similarity relation.
In particular, if $M$ is a flabby lattice,
$[M]\in F_G$ denotes the equivalence class containing $M$;
we define $[M_1]+[M_2]=[M_1\oplus M_2]$ and thus $F_G$ becomes an abelian monoid \cite{Sw2}.

\begin{defn} \label{d2.3}
Let $G$ be a finite group, $M$ be any $G$-lattice. Then $M$ has a
flabby resolution, i.e. there is an exact sequence of
$G$-lattices: $0\to M\to P\to E\to 0$ where $P$ is a permutation
lattice and $E$ is a flabby lattice. The class $[E]\in F_G$ is
uniquely determined by the lattice $M$ \cite{Sw2}. We define
$[M]^{fl}=[E]\in F_G$, following the nomenclature in \cite[page
38]{Lo}. Sometimes we will say that $[M]^{fl}$ is permutation or
invertible if the class $[E]$ contains a permutation or invertible
lattice.
\end{defn}

\begin{defn} \label{d2.4}
Let $K/k$ be a finite Galois field extension with $G=\fn{Gal}(K/k)$.
Let $M=\bigoplus_{1\le i\le n} \bm{Z}\cdot e_i$ be a $G$-lattice.
We define an action of $G$ on $K(M)=K(x_1,\ldots,x_n)$,
the rational function field of $n$ variables over $K$,
by $\sigma\cdot x_j=\prod_{1\le i\le n} x_i^{a_{ij}}$ if $\sigma\cdot e_j=\sum_{1\le i\le n} a_{ij} e_i \in M$,
for any $\sigma \in G$ (note that $G$ acts on $K$ also).
The fixed field is denoted by $K(M)^G$.
\end{defn}

If $T$ is an algebraic torus over $k$ satisfying
$T\times_{\fn{Spec}(k)}\fn{Spec}(K)\simeq\bm{G}_{m,K}^n$ where
$\bm{G}_{m,K}$ is the one-dimensional multiplicative group over
$K$, then $M:=\fn{Hom}(T,\bm{G}_{m,K})$ is a $G$-lattice and the
function field of $T$ over $k$ is isomorphic to $K(M)^G$ by Galois
descent \cite[page 36]{Sw2}. Thus the stable rationality of $T$
over $k$ is equivalent to that of $K(M)^G$. Such a torus $T$ is
called a $G$-torus over $k$ (see Definition \ref{d1.9}).

\begin{defn} \label{d2.5}

We give a generalization of $K(M)^G$. Let $M=\bigoplus_{1\le i\le
n} \bm{Z}\cdot e_i$ be a $G$-lattice, $k'/k$ be a finite Galois
extension field such that there is a surjection $G\to
\fn{Gal}(k'/k)$. Thus $G$ acts naturally on $k'$ by
$k$-automorphisms. We define an action of $G$ on
$k'(M)=k'(x_1,\ldots,x_n)$ in a similar way as $K(M)$. The fixed
field is denoted by $k'(M)^G$. The action of $G$ on $k'(M)$ is
called a purely quasi-monomial action in \cite[Definition
1.1]{HKK}; it is possible that $G$ acts faithfully on $k'$ (the
case $k'=K$) or trivially on $k'$ (the case $k'=k$) .

\end{defn}

\begin{theorem} \label{t2.6}
Let $K/k$ be a finite Galois extension field, $G=\fn{Gal}(K/k)$
and $M$ be a $G$-lattice.
\begin{enumerate}
\item[{\rm (1)}] {\rm (Voskresenskii, Endo and Miyata
\cite[Theorem 1.2; Len, Theorem 1.7]{EM1})} $K(M)^G$ is stably
$k$-rational if and only if $[M]^{fl}$ is permutation, i.e.\ there
exists a short exact sequence of $G$-lattices $0\to M\to P_1\to
P_2\to 0$ where $P_1$ and $P_2$ are permutation $G$-lattices.
\item[{\rm (2)}] {\rm (Saltman \cite[Theorem 3.14; Ka2, Theorem
2.8]{Sa})} $K(M)^G$ is retract $k$-rational if and only if
$[M]^{fl}$ is invertible.
\end{enumerate}
\end{theorem}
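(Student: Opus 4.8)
The plan is to reduce both statements to lattice theory via the multiplicative action, the whole argument resting on one fibration principle together with a fiber-product trick. First I would record the base case: for a permutation $G$-lattice $P$ the field $K(P)^G$ is $k$-rational. Writing $P=\bigoplus_i \bm{Z}[G/H_i]$, the variables in each orbit are permuted by $G$ while $G$ acts on $K$ through the Galois action, and Galois descent (Speiser's theorem, the multiplicative form of Hilbert 90) identifies the fixed field with a rational function field over $k$. From this I would deduce the no-name lemma: for any $G$-lattice $M$ and any permutation lattice $P$, the field $K(M\oplus P)^G$ is rational over $K(M)^G$, the $P$-variables forming a permutation set over the base $K(M)^G$.

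The engine is the fibration lemma: for a short exact sequence $0\to A\to B\to C\to 0$ of $G$-lattices with $C$ permutation, $K(B)^G$ is rational over $K(A)^G$. I would prove it by passing to the dual tori, where the sequence becomes $1\to T_C\to T_B\to T_A\to 1$; since $C$ is permutation the kernel $T_C$ is quasi-trivial, so the $T_C$-torsor $T_B\to T_A$ is Zariski-locally trivial by Hilbert 90, and therefore $K(B)^G=k(T_B)$ is purely transcendental over $k(T_A)=K(A)^G$.

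The central consequence is a comparison principle: if $M$ and $M'$ admit flabby resolutions $0\to M\to P\to E\to 0$ and $0\to M'\to P'\to E\to 0$ with the \emph{same} flabby term $E$, then $K(M)^G$ and $K(M')^G$ are stably $k$-isomorphic. Forming the fiber product $P''=P\times_E P'$ yields exact sequences $0\to M\to P''\to P'\to 0$ and $0\to M'\to P''\to P\to 0$ with permutation quotients, so by the fibration lemma $K(P'')^G$ is rational over both $K(M)^G$ and $K(M')^G$; hence each becomes isomorphic to $K(P'')^G$ after adjoining independent variables. The no-name lemma absorbs the similarity relation, so $K(M)^G$ depends, up to stable $k$-isomorphism, only on $[M]^{fl}$. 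This settles the backward implication of (1): since permutation lattices represent the zero class of $F_G$, the condition that $[M]^{fl}$ be permutation is exactly the existence of a resolution $0\to M\to P_1\to P_2\to 0$ with $P_1,P_2$ permutation; comparing this with the trivial resolution $0\to 0\to P_2\to P_2\to 0$ of the zero lattice (same flabby term $P_2$) shows that $K(M)^G$ is stably $k$-isomorphic to $k$, i.e.\ stably $k$-rational.

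The converse of (1) and the whole of (2) are the hard part, for there a rationality property of the field must \emph{force} a lattice condition, and the constructions above do not suffice. For the converse of (1) I would invoke Voskresenskii's theorem that $[M]^{fl}$ is a stable $k$-birational invariant of $K(M)^G$---represented by the Picard lattice of a smooth equivariant compactification---so that stable $k$-rationality forces $[M]^{fl}=0$. For (2), the same fiber-product mechanism shows that when $[M]^{fl}$ is invertible the complementary summand of a permutation overlattice splits off the torsor obstruction, exhibiting $K(M)^G$ as a retract of a rational field; the converse is Saltman's characterization of retract rationality through generic splitting, which detects exactly invertibility of the flabby class. Thus the main obstacle is not the explicit fiber-product constructions but the birational-invariance input---Voskresenskii for stable, Saltman for retract rationality---required to run the converse directions.
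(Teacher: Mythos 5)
The paper does not prove Theorem \ref{t2.6}: it is quoted as a known result with attributions to Voskresenskii, Endo--Miyata, Lenstra and Saltman, so there is no internal proof to compare against (the closest internal material is Lemma A1 of the appendix, which sketches the same comparison principle you use). Your treatment of the ``if'' direction of (1) is correct and is the standard argument: $k$-rationality of $K(P)^G$ for permutation $P$ via Galois descent, the fibration lemma for exact sequences with permutation quotient (triviality of torsors under quasi-trivial tori over the function field of the base, by Shapiro plus Hilbert 90), and the fiber-product comparison showing that $K(M)^G$ depends up to stable $k$-isomorphism only on $[M]^{fl}$ --- this is precisely Lenstra's Theorem 1.7 and Swan's Lemma 8.8. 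For the ``only if'' direction of (1) and for both directions of (2) you are, as you acknowledge, citing the theorems being proved (Voskresenskii's Picard-lattice invariance, Saltman's characterization of retract rationality), which is also what the paper does, so this is acceptable. The one point where you overreach is the claim that ``the same fiber-product mechanism'' yields the implication ($[M]^{fl}$ invertible $\Rightarrow$ retract $k$-rational): the fibration lemma only produces rational extensions $K(B)^G/K(A)^G$ when the quotient $B/A$ is a permutation lattice, whereas in the flabby resolution $0\to M\to P\to E\to 0$ the quotient $E$ is merely invertible, and extracting the retract of a localized polynomial ring from a splitting $E\oplus E'\simeq Q$ requires Saltman's construction (or \cite[Theorem 2.8]{Ka2}), not just the fiber-product trick. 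Since you cite Saltman in any case, this is a presentational overstatement rather than a fatal gap.
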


\begin{theorem}[{Endo and Miyata \cite[Theorem 1.5; Sw3, Theorem 3.4; Lo, 2.10.1]{EM}}] \label{t2.7}
Let $G$ be a finite group. Then all the flabby $G$-lattices are
invertible if and only if all the Sylow subgroups of $G$ are
cyclic.
\end{theorem}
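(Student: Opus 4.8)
The plan is to reduce everything to a cohomological criterion for invertibility and then localize at the Sylow subgroups, so that the statement collapses to a dichotomy for $p$-groups. Throughout, $\widehat H^{i}(S,-)$ denotes Tate cohomology over a subgroup $S\le G$ (so flabby means $\widehat H^{-1}(S,-)=0$ and coflabby means $\widehat H^{1}(S,-)=0$ for all $S$). The backbone is the characterization that a $G$-lattice $M$ is invertible if and only if it is both flabby and coflabby. The implication ``invertible $\Rightarrow$ flabby and coflabby'' is the easy half: the permutation lattice $\bm{Z}[G/H]$ has $\widehat H^{\pm 1}(S,\bm{Z}[G/H])=0$ by Shapiro's lemma (as $\widehat H^{\pm1}(T,\bm{Z})=0$ for every finite group $T$), and these vanishings pass to direct summands. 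The converse is the technical heart: given $M$ flabby and coflabby, one dualizes a flabby resolution of $M^{\circ}=\fn{Hom}(M,\bm{Z})$ to get a coflabby resolution $0\to C\to P\to M\to 0$ with $P$ permutation and $C$ coflabby, and then splits it using the vanishing $\fn{Ext}^1_{\bm{Z}[G]}(A,B)=0$ whenever $A$ is flabby and $B$ is coflabby. I expect this last Ext-vanishing (proved by a dévissage over subgroups) to be the main obstacle of the argument.

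I would next record two localization facts. (a) Invertibility is detected on Sylow subgroups: $M$ is invertible over $G$ iff $\fn{Res}_{G_p}M$ is invertible over each Sylow subgroup $G_p$; the nontrivial direction follows by choosing a surjection $P\twoheadrightarrow M$ from a permutation lattice, splitting it over each $G_p$, and patching the local sections by corestriction, using $\gcd_p [G:G_p]=1$. (b) The property ``every flabby $G$-lattice is invertible'' is inherited by every subgroup $H\le G$: if $N$ is a flabby $H$-lattice, then $\fn{Ind}_H^G N$ is flabby over $G$ (Mackey plus Shapiro), hence invertible by hypothesis, and $N$ is a direct summand of $\fn{Res}_H\fn{Ind}_H^G N$, so $N$ is invertible over $H$.

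With these in hand the ``if'' direction is short. Suppose every Sylow subgroup of $G$ is cyclic and $M$ is flabby. Then each $\fn{Res}_{G_p}M$ is a flabby lattice over the cyclic group $G_p$, and for a cyclic group the $2$-periodicity of Tate cohomology gives $\widehat H^{-1}(S,-)\cong\widehat H^{1}(S,-)$ for every (necessarily cyclic) subgroup $S$; hence over $G_p$ flabby is equivalent to coflabby. Thus $\fn{Res}_{G_p}M$ is flabby and coflabby, so invertible by the characterization, and therefore $M$ is invertible over $G$ by the Sylow detection in (a).

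For the ``only if'' direction I would argue contrapositively. By the inheritance in (b) it suffices to show that a non-cyclic $p$-group $P$ carries a flabby lattice that is not invertible, and by the characterization it is enough to produce a flabby lattice that is not coflabby. Since $P$ is non-cyclic, $P^{\mathrm{ab}}$ is non-cyclic, so $P$ has two distinct maximal (index-$p$, normal) subgroups $H_1\neq H_2$ with $H_1H_2=P$. I would set $M=\ker\big(\bm{Z}[P/H_1]\oplus\bm{Z}[P/H_2]\xrightarrow{(\varepsilon_1,-\varepsilon_2)}\bm{Z}\big)$, the difference of the two augmentations. From the long exact Tate sequence of $0\to M\to\bm{Z}[P/H_1]\oplus\bm{Z}[P/H_2]\to\bm{Z}\to 0$ one gets $\widehat H^{1}(P,M)=\fn{coker}\big(\widehat H^{0}(H_1,\bm{Z})\oplus\widehat H^{0}(H_2,\bm{Z})\xrightarrow{\fn{cor}}\widehat H^{0}(P,\bm{Z})\big)$; identifying $\widehat H^{0}(T,\bm{Z})=\bm{Z}/|T|$ and noting that each corestriction sends a generator to $p$, the image lies in $p\,\bm{Z}/|P|$, so this cokernel surjects onto $\bm{Z}/p$ and $M$ is not coflabby. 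On the other hand the same sequence yields $\widehat H^{-1}(S,M)=\fn{coker}\big(\widehat H^{-2}(S,\bm{Z}[P/H_1]\oplus\bm{Z}[P/H_2])\to\widehat H^{-2}(S,\bm{Z})\big)$ for each $S\le P$, and this vanishes because $H_1H_2=P$ forces the relevant transfer maps onto $\widehat H^{-2}(S,\bm{Z})=S^{\mathrm{ab}}$ to be surjective; hence $M$ is flabby. This produces the required flabby, non-invertible $P$-lattice and finishes the contrapositive. After the structural Ext-vanishing of the first paragraph, the remaining work is the (careful but routine) Tate-cohomology bookkeeping confirming that this explicit $M$ is flabby while failing to be coflabby.
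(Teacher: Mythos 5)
The paper itself offers no proof of this theorem---it is quoted from Endo--Miyata---so your attempt must stand on its own, and unfortunately its backbone is false. The characterization ``invertible $\Leftrightarrow$ flabby and coflabby'' holds only in the direction you call easy; the converse fails in general, and your proposed route to it cannot work: if $\fn{Ext}^1_{\bm{Z}[G]}(A,B)=0$ held for every flabby $A$ and coflabby $B$, then splitting the dualized resolution $0\to C\to P\to M\to 0$ (with $P$ permutation, $C$ coflabby) would show that \emph{every} flabby lattice $M$ is invertible---the coflabbiness of $M$ is never used in that splitting---contradicting the very theorem you are proving. The true vanishing (Lenstra, Prop.~1.2, used repeatedly in this paper) requires $A$ to be permutation or invertible, not merely flabby. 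This leaves a real hole in your ``if'' direction: the Sylow detection of invertibility, the inheritance to subgroups via $\fn{Ind}$ and $\fn{Res}$, and the observation that flabby equals coflabby over cyclic groups by periodicity of Tate cohomology are all correct, but they only reduce the statement to ``every flabby lattice over a cyclic $p$-group is invertible,'' which is precisely the hard core of Endo--Miyata's theorem. That step is not formal; the known proofs go through the integral representation theory of $\bm{Z}[C_{p^n}]$ (projective ideals, fibre products, class groups of cyclotomic rings), in the spirit of Theorem \ref{t5.1} and Swan's work cited in the paper.

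The ``only if'' direction also breaks at a concrete point. Your lattice $M=\ker\bigl(\bm{Z}[P/H_1]\oplus\bm{Z}[P/H_2]\to\bm{Z}\bigr)$ is indeed not coflabby ($\widehat H^1(P,M)\cong\bm{Z}/p$, as you compute), but it is not flabby either. Take $P=\langle a\rangle\times\langle b\rangle\cong C_p\times C_p$, $H_1=\langle a\rangle$, $H_2=\langle b\rangle$, and $S=\langle ab\rangle$. Then $S\cap H_1=S\cap H_2=1$, so both $\bm{Z}[P/H_i]$ restrict to free $S$-modules and the defining sequence gives $\widehat H^{-1}(S,M)\cong\widehat H^{-2}(S,\bm{Z})\cong\bm{Z}/p\neq 0$. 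Your surjectivity claim for the transfer maps onto $S^{\mathrm{ab}}$ rests only on $H_1H_2=P$, which handles $S=P$ but says nothing about subgroups $S$ contained in neither $H_i$; for such $S$ the images of $(S\cap H_1)^{\mathrm{ab}}$ and $(S\cap H_2)^{\mathrm{ab}}$ need not generate $S^{\mathrm{ab}}$. So this $M$ is neither flabby nor coflabby and witnesses nothing. Exhibiting a flabby non-invertible lattice over a non-cyclic $p$-group genuinely requires a different construction---typically one takes the flabby class $[N]^{fl}$ of a suitable lattice $N$ (e.g.\ the dual augmentation ideal) and detects non-invertibility through $H^1(S,[N]^{fl])\neq 0$ or a $\widehat H^3(S,\bm{Z})$-type obstruction---and the generalized quaternion case must be handled as well.

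Correction to a typo in the last sentence: read $H^1(S,[N]^{fl})\neq 0$.
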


\section{\boldmath Some $D_n$-lattices}

Throughout this section, $G$ denotes the group
$G=\langle\sigma,\tau:\sigma^n=\tau^2=1,\tau\sigma\tau^{-1}=\sigma^{-1}\rangle$
where $n \ge 3$ is an odd integer, i.e.\ $G$ is the dihedral group
$D_n$. Define $H=\langle \tau \rangle$.

We will construct six $G$-lattices which will become
indecomposable $G$-lattices if $n=p$ is an odd prime number (to be
proved in Section 4).

\begin{defn} \label{d3.1}
Let $G=\langle\sigma,\tau\rangle$ be the dihedral group defined
before. Define $G$-lattices $M_+$ and $M_-$ by $M_+=\fn{Ind}^G_H
\bm{Z}$, $M_-=\fn{Ind}^G_H \bm{Z}_-$ the induced lattices where
$\bm{Z}$ and $\bm{Z}_-$ are $H$-lattices such that $\tau$ acts on
$\bm{Z}= \bm{Z}\cdot u$, $\bm{Z}_-=\bm{Z}\cdot u'$ by $\tau\cdot
u=u$, $\tau\cdot u'=-u'$ respectively (note that $u$ and $u'$ are
the generators of $\bm{Z}$ and $\bm{Z}_-$ as abelian groups). By
choosing a $\bm{Z}$-basis for $M_+$ corresponding to $\sigma^i
u\in \fn{Ind}^G_H\bm{Z}$ (where $0\le i\le n-1$), the actions of
$\sigma$ and $\tau$ on $M_+$ are given by the $n\times n$ integral
matrices
\[
\sigma\mapsto A=\begin{pmatrix}
0 & 0 & \cdots & 0 & 1 \\
1 & 0 & & & 0 \\
& \ddots & \ddots & & \vdots \\
& & 1 & 0 & 0 \\
& & & 1 & 0
\end{pmatrix}, \quad
\tau\mapsto B=\left(\begin{array}{@{}cccc;{3pt/2pt}c@{}}
& & & 1 & \\ & & \iddots & & \\ & 1 & & & \\ 1 & & & & \\ \hdashline[3pt/2pt] & & & & 1
\end{array}\right).
\]

Similarly, for a $\bm{Z}$-basis for $M_-$ corresponding to $\sigma^i u'$,
the actions of $\sigma$ and $\tau$ are given by
\[
\sigma\mapsto A=\begin{pmatrix}
0 & 0 & \cdots & 0 & 1 \\
1 & 0 & & & 0 \\
& \ddots & \ddots & & \vdots \\
& & 1 & 0 & 0 \\
& & & 1 & 0
\end{pmatrix}, \quad
\tau\mapsto -B=\left(\begin{array}{@{}cccc;{3pt/2pt}c@{}}
& & & -1 & \\ & & \iddots & & \\ & -1 & & & \\ -1 & & & & \\ \hdashline[3pt/2pt] & & & & -1
\end{array}\right).
\]
\end{defn}

\begin{defn} \label{d3.2}
As before, $G=\langle \sigma,\tau\rangle \simeq D_n$. Let
$f(\sigma)=1+\sigma+\cdots+\sigma^{n-1}\in \bm{Z}[G]$. Since
$\tau\cdot f(\sigma)=f(\sigma)\cdot \tau$, the left
$\bm{Z}[G]$-ideal $\bm{Z}[G]\cdot f(\sigma)$ is a two-sided ideal;
as an ideal in $\bm{Z}[G]$, we denote it by $\langle
f(\sigma)\rangle$. The natural projection $\bm{Z}[G]\to
\bm{Z}[G]/\langle f(\sigma)\rangle$ induces an isomorphism of
$\bm{Z}[G]/\langle f(\sigma)\rangle$ and the twisted group ring
$\bm{Z}[\zeta_n]\circ H$ (see \cite[p.589]{CR}). Explicitly, let
$\zeta_n$ be a primitive $n$-th root of unity. Then
$\bm{Z}[\zeta_n]\circ H=\bm{Z}[\zeta_n]\oplus \bm{Z}[\zeta_n]\cdot
\tau$ and $\tau\cdot \zeta_n=\zeta_n^{-1}$. If $n=p$ is an odd
prime number, $\bm{Z}[\zeta_p]\circ H$ is a hereditary order
\cite[pages 593--595]{CR}. Note that we have the following fibre
product diagram
\[
\xymatrix{\bm{Z}[G] \ar[r] \ar[d] & \bm{Z}[\zeta_n]\circ H \ar[d] \\
\bm{Z}[H] \ar[r] & \overline{\bm{Z}}[H]}
\]
where $\overline{\bm{Z}}=\bm{Z}/n\bm{Z}$ (compare with \cite[page 748, (34.43)]{CR}).
\end{defn}

Using the $G$-lattices $M_+$ and $M_-$ in Definition \ref{d3.1},
define $N_+=\bm{Z}[G]/\langle f(\sigma)\rangle \otimes_{\bm{Z}[G]} M_+ =M_+/f(\sigma)M_+$,
$N_-=\bm{Z}[G]/\langle f(\sigma)\rangle \otimes_{\bm{Z}[G]} M_-=M_-/f(\sigma) M_-$.

The $\bm{Z}[G]/\langle f(\sigma)\rangle$-lattices $N_+$ and $N_-$ may be regarded as $G$-lattices
through the $\bm{Z}$-algebra morphism $\bm{Z}[G]\to \bm{Z}[G]/\langle f(\sigma)\rangle$.
By choosing a $\bm{Z}$-basis for $N_+$ corresponding to $\sigma^i u$ where $1\le i\le n-1$,
the actions of $\sigma$ and $\tau$ on $N_+$ are given by the $(n-1)\times (n-1)$ integral matrices
\[
\sigma\mapsto A'=\begin{pmatrix}
0 & 0 & 0 & \cdots & 0 & -1 \\
1 & 0 & & & & -1 \\
& 1 & 0 & & & -1 \\
& & & \ddots & & \vdots \\
& & & & 0 & -1 \\
& & & & 1 & -1
\end{pmatrix}, \quad
\tau\mapsto B'=\begin{pmatrix}
& & & 1 \\ & & \iddots & \\ & 1 & & \\ 1 & & &
\end{pmatrix}.
\]

Similarly, the actions of $\sigma$ and $\tau$ on $N_-$ are given by
\[
\sigma\mapsto A'=\begin{pmatrix}
0 & 0 & 0 & \cdots & 0 & -1 \\
1 & 0 & & & & -1 \\
& 1 & 0 & & & -1 \\
& & & \ddots & & \vdots \\
& & & & 0 & -1 \\
& & & & 1 & -1
\end{pmatrix}, \quad
\tau\mapsto -B'=\begin{pmatrix}
& & & -1 \\ & & \iddots & \\ & -1 & & \\ -1 & & &
\end{pmatrix}.
\]

\begin{defn} \label{d3.3}
We will use the $G$-lattices $M_+$ and $M_-$ in Definition
\ref{d3.1} to construct $G$-lattices $\widetilde{M}_+$ and
$\widetilde{M}_-$ which are of rank $n+1$ satisfying the short
exact sequences of $G$-lattices
\begin{align*}
0 &\to M_+\to \widetilde{M}_+\to \bm{Z}\to 0 \\
0 &\to M_-\to \widetilde{M}_-\to \bm{Z}_-\to 0
\end{align*}
where the $\bm{Z}$-lattice structures of $\widetilde{M}_+$ and $\widetilde{M}_-$ will be described below and $\bm{Z}=\bm{Z}\cdot w$,
$\bm{Z}_-=\bm{Z}\cdot w'$ are $G$-lattices defined by $\sigma\cdot w=w$, $\tau\cdot w=w$, $\sigma\cdot w'=w'$, $\tau\cdot w'=-w'$.
\end{defn}

Let $\{w_i:0\le i\le n-1\}$ be the $\bm{Z}$-basis of $M_+$ in Definition \ref{d3.1}.
As a free abelian group, $\widetilde{M}_+=(\bigoplus_{0\le i\le n-1} \bm{Z}\cdot w_i) \oplus \bm{Z}\cdot w$.
Define the actions of $\sigma$ and $\tau$ on $\widetilde{M}_+$ by the $(n+1)\times (n+1)$ integral matrices
\[
\sigma\mapsto \tilde{A}=\left(\begin{array}{@{\quad}c@{\quad~};{3pt/2pt}c@{}}
 & \\ A & \\  & \\ \hdashline[3pt/2pt] & 1
\end{array}\right),\quad
\tau\mapsto \tilde{B}=\left(\begin{array}{@{\quad}c@{\quad~};{3pt/2pt}c@{}}
 & 1 \\ B & \vdots \\ & 1 \\ \hdashline[3pt/2pt] & -1
\end{array}\right).
\]

Similarly, let $\{w_i:0\le i\le n-1\}$ be the $\bm{Z}$-basis of $M_-$ in Definition \ref{d3.1},
and $\widetilde{M}_-=(\bigoplus_{0\le i\le n-1} \bm{Z} w_i)\oplus\bm{Z}\cdot w'$.
Define the actions of $\sigma$ and $\tau$ on $\widetilde{M}_-$ by
\[
\sigma\mapsto \tilde{A}=\left(\begin{array}{@{\quad}c@{\quad~};{3pt/2pt}c@{}}
 & \\ A & \\  & \\ \hdashline[3pt/2pt] & 1
\end{array}\right),\quad
\tau\mapsto -\tilde{B}=\left(\begin{array}{@{\quad}c@{\quad~};{3pt/2pt}c@{}}
 & -1 \\ -B & \vdots \\ & -1 \\ \hdashline[3pt/2pt] & 1
\end{array}\right).
\]

In the remaining part of this section, we will show that
$\widetilde{M}_+$ and $\widetilde{M}_-$ are stably permutation
$G$-lattices, and $\widetilde{M}_+ \oplus \widetilde{M}_-$ is a
permutation $G$-lattice.

\begin{theorem} \label{t3.4}
Let $G=\langle\sigma,\tau:\sigma^n=\tau^2=1,\tau\sigma\tau^{-1}=\sigma^{-1}\rangle \simeq D_n$ where $n$ is an odd integer.
Then $\widetilde{M}_+\oplus \bm{Z}\simeq \bm{Z}[G/\langle\sigma\rangle]\oplus\bm{Z}[G/\langle\tau\rangle]$.
\end{theorem}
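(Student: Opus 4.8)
The plan is to establish the isomorphism constructively, by producing an explicit $\bm{Z}$-basis of the left-hand lattice on which $G$ acts by the permutation pattern of the right-hand side. First I would unwind the two summands on the right. Since $[G:\langle\tau\rangle]=n$ and $\tau\cdot\sigma^i u=\sigma^{-i}u$, the permutation lattice $\bm{Z}[G/\langle\tau\rangle]$ is exactly $M_+=\fn{Ind}^G_H\bm{Z}$; and since $[G:\langle\sigma\rangle]=2$, the lattice $\bm{Z}[G/\langle\sigma\rangle]$ is the rank-$2$ permutation lattice on which $\sigma$ acts trivially and $\tau$ swaps the two basis vectors. So it suffices to split $L:=\widetilde{M}_+\oplus\bm{Z}$ internally as $P'\oplus Q'$ with $P'\simeq\bm{Z}[G/\langle\sigma\rangle]$ and $Q'\simeq M_+$.

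Denote by $\{w_0,\dots,w_{n-1}\}$ the basis of $M_+$, so that $\sigma w_i=w_{i+1}$ and $\tau w_i=w_{-i}$ (indices mod $n$); by $w$ the extra generator of $\widetilde{M}_+$, so that $\sigma w=w$ and, reading off the last column of $\tilde{B}$, $\tau w=s-w$ where $s:=\sum_i w_i$; and by $z$ the generator of the appended trivial $\bm{Z}$. The construction is to put $q_i:=w_i+z$ for $0\le i\le n-1$, together with $a:=w+\tfrac{n-1}{2}z$ and $b:=s-w+\tfrac{n-1}{2}z$. Here the oddness of $n$ enters decisively, as it is precisely what makes $\tfrac{n-1}{2}$ an integer.

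I would then check three points. (i) One has $\sigma q_i=q_{i+1}$ and $\tau q_i=q_{-i}$, so $Q':=\sum_i\bm{Z}q_i$ is $G$-stable with $\tau$-fixed cyclic generator $q_0$, whence $Q'\simeq\bm{Z}[G/\langle\tau\rangle]\simeq M_+$. (ii) A direct computation gives $\sigma a=a$, $\sigma b=b$, $\tau a=b$, $\tau b=a$, so $P':=\bm{Z}a\oplus\bm{Z}b\simeq\bm{Z}[G/\langle\sigma\rangle]$. (iii) The family $\{q_0,\dots,q_{n-1},a,b\}$ is a $\bm{Z}$-basis of $L$. The third point is the crux and the only place an obstruction can arise: I would compute the determinant of the transition matrix from $\{w_0,\dots,w_{n-1},w,z\}$, and after subtracting $\sum_i q_i$ from the $b$-row to clear its $w_i$-entries the matrix becomes block triangular with an $I_n$ block and the $2\times2$ block $\left(\begin{smallmatrix}1&(n-1)/2\\-1&(-n-1)/2\end{smallmatrix}\right)$ of determinant $-1$, so the transition determinant is $\pm1$. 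This unimodularity is exactly what the coefficient $\tfrac{n-1}{2}$ is engineered to force: the fixed sublattice $L^G=\bm{Z}s\oplus\bm{Z}z$ must be spanned by the two norms $a+b=s+(n-1)z$ and $\sum_i q_i=s+nz$, whose determinant $\det\left(\begin{smallmatrix}1&1\\n-1&n\end{smallmatrix}\right)=1$ is the real content of the computation. Granting (i)--(iii), $L=P'\oplus Q'$ yields the asserted isomorphism.

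If one prefers a structural argument, the same result follows from a Schanuel-type computation: form the pullback $X$ of $\widetilde{M}_+$ and $\bm{Z}[G/\langle\sigma\rangle]$ over their common quotient $\bm{Z}_-$, giving $0\to M_+\to X\to\bm{Z}[G/\langle\sigma\rangle]\to0$ and $0\to\bm{Z}\to X\to\widetilde{M}_+\to0$. The first splits because $\fn{Ext}^1_G(\bm{Z}[G/\langle\sigma\rangle],M_+)\cong H^1(\langle\sigma\rangle,\fn{Res}\,M_+)=0$, as $\fn{Res}_{\langle\sigma\rangle}M_+$ is the free module $\bm{Z}[\langle\sigma\rangle]$; the second splits because its class is the pullback of the nonsplit extension $0\to\bm{Z}\to\bm{Z}[G/\langle\sigma\rangle]\to\bm{Z}_-\to0$, which lies in the image of the connecting map of the first row and therefore vanishes after pullback. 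I expect the explicit basis above to be the cleaner thing to write down.
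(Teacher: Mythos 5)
Your proof is correct and takes essentially the same approach as the paper: an explicit unimodular change of basis verified by a determinant computation, except that you build a copy of $\bm{Z}[G/\langle\sigma\rangle]\oplus\bm{Z}[G/\langle\tau\rangle]$ inside $\widetilde{M}_+\oplus\bm{Z}$, whereas the paper goes in the opposite direction. Your elements $q_i=w_i+z$, $a=w+\tfrac{n-1}{2}z$, $b=s-w+\tfrac{n-1}{2}z$ do satisfy (i)--(iii) as claimed (the transition determinant is $-1$), so the main argument stands on its own; the structural Schanuel-type sketch is only a sketch, but it is not needed.
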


\begin{proof}
Let $u_0$, $u_1$ be the $\bm{Z}$-basis of
$\bm{Z}[G/\langle\sigma\rangle]$ correspond to 1, $\tau$. Then
$\sigma:u_0\mapsto u_0$, $u_1\mapsto u_1$,
$\tau:u_0\leftrightarrow u_1$.

Let $\{v_i:0\le i\le n-1\}$ be the $\bm{Z}$-basis of
$\bm{Z}[G/\langle\tau\rangle]$ correspond to $\sigma^i$ where
$0\le i\le n-1$. Then $\sigma: v_i\mapsto v_{i+1}$ (where the
index is understood modulo $n$), $\tau: v_i\mapsto v_{n-i}$ for
$0\le i\le n-1$.

It follows that $u_0,u_1,v_0,v_1,\ldots,v_{n-1}$ is a $\bm{Z}$-basis of $\bm{Z}[G/\langle\sigma\rangle]\oplus\bm{Z}[G/\langle\tau\rangle]$.

Define
\begin{gather*}
t = u_0+u_1+\sum_{0\le i\le n-1} v_i, \\
x = u_0+u_1+\sum_{1\le i\le n-1} v_i,\quad y= \frac{n-1}{2}u_0+\frac{n+1}{2}u_1+\frac{n-1}{2} \sum_{0\le i\le n-1}v_i.
\end{gather*}

Since $\sum_{0\le i\le n} \sigma^i\cdot (v_1+\cdots+v_{n-1})=(n-1)\sum_{0\le i\le n-1} v_i$,
it follows that $\tau\cdot y=-y+\sum_{0\le i\le n-1} \sigma^i (x)$.
Then it is routine to verify that
\[
\left(\bigoplus_{0\le i\le n-1} \bm{Z}\cdot \sigma^i(x)\right) \oplus \bm{Z}\cdot y \simeq \widetilde{M}_+, \quad \bm{Z}\cdot t\simeq \bm{Z}
\]
by checking the actions of $\sigma$ and $\tau$ on lattices in both sides.

Now we will show that $\sigma(x),
\sigma^2(x),\ldots,\sigma^{n-1}(x),x,y,t$ is a $\bm{Z}$-basis of
$\bm{Z}[G/\langle\sigma\rangle]\oplus\bm{Z}[G/\langle\tau\rangle]$.
Write the determinant of these $n+2$ elements with respect to the
$\bm{Z}$-basis $u_0,u_1,v_0,v_1,\ldots,v_{n-1}$. We get the
coefficient matrix $T$ as
\begin{gather*}
\renewcommand{\arraystretch}{1.15} \normalsize
T=\left(\begin{array}{@{\,}cccc;{3pt/2pt}cc@{\,}}
1 & 1 & \cdots & 1 & \frac{n-1}{2} & 1 \\
1 & 1 & \cdots & 1 & \frac{n+1}{2} & 1 \\[1mm] \hdashline[3pt/2pt]
1 & 1 & & 0 & \frac{n-1}{2} & 1 \\
0 & 1 & & 1 & \frac{n-1}{2} & 1 \\
1 & 0 & & 1 & \frac{n-1}{2} & 1 \\
\vdots & \vdots & & \vdots & \vdots & \vdots \\
1 & 1 & & 1 & \frac{n-1}{2} & 1
\end{array}\right) \hspace*{-44mm}
\begin{minipage}[c][0mm]{56mm}
$\begin{blockarray}{cl}
~ & ~ \\[19mm]
\begin{block}{c\}l}
~ & \\ & \\ & \text{$n$ rows} \\[2pt] & \\ & \\
\end{block}
~ & ~ \\[-10mm] \underbrace{\hspace*{20mm}}_{\text{\normalsize$n$ columns}}\hspace*{21mm} &
\end{blockarray}$
\end{minipage} \\[1mm]
\end{gather*}

The determinant of $T$ may be calculated as follows: Subtract the
last column from each of the first $n$ columns. Also subtract
$\frac{n-1}{2}$ times of the last column from the $(n+1)$-th
column. Then it is easy to see $\det (T)= 1$.
\end{proof}

\begin{theorem} \label{t3.5}
Let $G=\langle\sigma,\tau:\sigma^n=\tau^2=1,\tau\sigma\tau^{-1}=\sigma^{-1}\rangle\simeq D_n$ where $n$ is an odd integer.
Then $\widetilde{M}_-\oplus\bm{Z}[G/\langle\tau\rangle]\simeq \bm{Z}[G]\oplus\bm{Z}$.
\end{theorem}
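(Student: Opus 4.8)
### Proof Plan for Theorem 3.5

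The statement to prove is $\widetilde{M}_-\oplus\bm{Z}[G/\langle\tau\rangle]\simeq \bm{Z}[G]\oplus\bm{Z}$, an isomorphism of $G$-lattices of rank $n+1+n = 2n+1 = 2n+1$ on the left and $2n+1$ on the right. The plan is to mimic the strategy just used in the proof of Theorem 3.4: exhibit an explicit $\bm{Z}$-basis of the right-hand lattice $\bm{Z}[G]\oplus\bm{Z}$, produce from it a sublattice that is $G$-isomorphic to $\widetilde{M}_-$ and a complementary $G$-sublattice isomorphic to $\bm{Z}[G/\langle\tau\rangle]$, and then verify that together these vectors form a $\bm{Z}$-basis by computing that the change-of-basis determinant is $\pm 1$.

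First I would fix convenient $\bm{Z}$-bases. For $\bm{Z}[G]$, use the basis $\{\sigma^i,\ \sigma^i\tau : 0\le i\le n-1\}$ on which $\sigma$ and $\tau$ act by left multiplication; for the trivial summand $\bm{Z}=\bm{Z}\cdot t$ take $\sigma\cdot t=\tau\cdot t=t$. On the target side $\widetilde{M}_-$, recall from Definition 3.3 that $\widetilde{M}_-$ sits in $0\to M_-\to\widetilde{M}_-\to\bm{Z}_-\to 0$, where $M_-=\fn{Ind}^G_H\bm{Z}_-$ and $\tau$ acts on the extra generator $w'$ by $\tau\cdot w'=-w'$, $\sigma\cdot w'=w'$; and on $\bm{Z}[G/\langle\tau\rangle]$ the action is $\sigma:v_i\mapsto v_{i+1}$, $\tau:v_i\mapsto v_{n-i}$ as in Theorem 3.4. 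The essential point is that the sign twist in $\widetilde{M}_-$ must be matched against the regular representation $\bm{Z}[G]$, which already contains both the $+1$ and $-1$ eigenspaces of $\tau$; this is why $\bm{Z}[G]$ (rather than a sum of smaller permutation lattices) appears on the right.

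The core of the argument is to write down the correct vectors. Following the template of Theorem 3.4, I would look for elements $x,y\in\bm{Z}[G]\oplus\bm{Z}$ such that the $G$-orbit data $\{\sigma^i(x): 0\le i\le n-1\}$ together with $y$ realize $\widetilde{M}_-$ (using the matrices $\tilde A$ and $-\tilde B$ from Definition 3.3, where $\tau$ acts through $-B$ on the $M_-$ block and sends $w'\mapsto -w'+\sum_i\sigma^i$-type correction), and a single vector whose $G$-orbit realizes $\bm{Z}[G/\langle\tau\rangle]$. The natural candidates combine the $\tau$-antisymmetric combination $\sigma^i-\sigma^i\tau$ of $\bm{Z}[G]$ (which carries the $-1$ sign of $w'$) with integer multiples of $t$ to absorb the inhomogeneous terms appearing in $-\tilde B$, exactly as $y$ in Theorem 3.4 used coefficients $\tfrac{n\pm1}{2}$ to correct the $\tau$-action. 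I would then verify the $G$-equivariance of the claimed isomorphism by checking the action of $\sigma$ and $\tau$ on each proposed generator against the defining matrices.

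The main obstacle will be pinning down the precise coefficients of $y$ (and any auxiliary vector) so that, simultaneously, (i) the $\tau$-action produces the inhomogeneous term demanded by the last column of $-\tilde B$, and (ii) the full list of $2n+1$ vectors has determinant $\pm1$ against the standard basis. As in Theorem 3.4, the determinant computation itself should reduce—after subtracting suitable multiples of one distinguished column (the $t$-column) from the others—to a triangular or permutation-type matrix whose determinant is visibly a unit. I expect the half-integer coefficients $\tfrac{n-1}{2}$, $\tfrac{n+1}{2}$ to reappear, using that $n$ is odd so these are integers, and the final verification will again be \emph{routine but delicate}: the content of the theorem is the \emph{existence} of such a basis, witnessing that $\widetilde{M}_-$ is stably permutation even though it is not itself permutation.
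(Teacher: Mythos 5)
Your plan is the paper's plan --- exhibit explicit vectors in $\bm{Z}[G]\oplus\bm{Z}$ whose $\sigma$-orbits realize $\widetilde{M}_-$ and $\bm{Z}[G/\langle\tau\rangle]$, then check the change-of-basis determinant is $\pm1$ --- and you even anticipate the key structural choice: the $\tau$-antisymmetric elements of $\bm{Z}[G]$ carry the sign twist, and the trivial summand $t$ absorbs the inhomogeneous term in the $\tau$-action. But the proposal stops exactly where the mathematical content begins. You never write down the vectors, and you never perform the determinant computation; both are deferred as ``the main obstacle.'' Since the entire theorem \emph{is} the existence of this particular unimodular basis (the ranks and characters match trivially; the point is an integral isomorphism), a proof that does not produce the vectors has not proved anything. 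Concretely, the paper takes $x=u_0-v_0$, $y=\bigl(\sum_{0\le i\le n-1}u_i\bigr)+t$, and $z=\bigl(\sum_{1\le i\le (n-1)/2}u_i\bigr)+\bigl(\sum_{(n+1)/2\le j\le n-1}v_j\bigr)+t$ (with $u_i\leftrightarrow\sigma^i$, $v_j\leftrightarrow\sigma^j\tau$), and then reduces the $(2n+1)\times(2n+1)$ determinant to an $(n+1)\times(n+1)$ one by adding the $i$-th row to the $(n+i)$-th row, after which it is visibly $\pm1$.

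Your guess about the shape of the coefficients is also evidence the computation was not attempted: you predict the half-integers $\tfrac{n\pm1}{2}$ from Theorem 3.4 will reappear as coefficients, but in this theorem all three generating vectors have coefficients in $\{0,\pm1\}$; the fractions $\tfrac{n\pm1}{2}$ enter only as index bounds in the sum defining $z$. (You also have a sign slip in describing $\widetilde{M}_-$: under $-\tilde{B}$ the extra generator maps to $w'-\sum_i w_i$, i.e.\ with coefficient $+1$ on $w'$, not $-w'+\cdots$.) None of this is a wrong \emph{approach} --- it is the right approach, unexecuted --- but as written the proposal is a sketch of Theorem 3.4's proof with a promissory note where Theorem 3.5's data should be.
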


\begin{proof}
The idea of the proof is similar to that of Theorem \ref{t3.4}.

Let $u_0,u_1,\ldots,u_{n-1},v_0,v_1,\ldots,v_{n-1},t$ be a $\bm{Z}$-basis of $\bm{Z}[G]\oplus\bm{Z}$ where $u_i$,
$v_j$ correspond to $\sigma^i$, $\sigma^j\tau$ in $\bm{Z}[G]$.
The actions of $\sigma$ and $\tau$ are given by
\begin{align*}
\sigma &: u_i\mapsto u_{i+1},~ v_j\mapsto v_{j+1},~ t\mapsto t, \\
\tau &: u_i\leftrightarrow v_{n-i},~ t\mapsto t
\end{align*}
where the index of $u_i$ or $v_j$ is understood modulo $n$.

Define $x,y,z\in \bm{Z}[G]\oplus \bm{Z}$ by
\begin{gather*}
x = u_0-v_0, \quad y=\left(\sum_{0\le i\le n-1} u_i\right)+t, \\
z =\left(\sum_{1\le i\le \frac{n-1}{2}} u_i\right)+\left(\sum_{\frac{n+1}{2}\le j\le n-1} v_j\right)+t.
\end{gather*}

We claim that
\[
\left(\bigoplus_{0\le i\le n-1} \bm{Z}\cdot \sigma^i(x)\right)\oplus \bm{Z}\cdot y\simeq \widetilde{M}_-, \quad
\bigoplus_{0\le i\le n-1} \bm{Z}\cdot \sigma^i(z)\simeq \bm{Z}[G/\langle\tau\rangle].
\]

Since $\tau(x)=-x$, $\tau(z)=z$,
it follows that $\tau\cdot \sigma^i(x)=-\sigma^{n-i}(x)$, $\tau\cdot\sigma^i(z)=\sigma^{n-i}(z)$.
The remaining proof is omitted.

Now we will show that
$\sigma(x),\sigma^2(x),\ldots,\sigma^{n-1}(x),
x,y,\sigma(z),\ldots,\sigma^{n-1}(z)$, $z$ form a $\bm{Z}$-basis
of $\bm{Z}[G]\oplus \bm{Z}$. Write the coefficient matrix of these
elements with respect to the $\bm{Z}$-basis
$u_0,u_1,\ldots,u_{n-1},v_0,v_1,\ldots,v_{n-1},t$. We get
$\det(T_n)$ where $T_n$ is a $(2n+1)\times (2n+1)$ integral
matrix. For example,
\[
T_3=\left(\begin{array}{@{}ccc;{3pt/2pt}c;{3pt/2pt}ccc@{}}
0 & 0 & 1 & 1 & 0 & 0 & 1 \\
1 & 0 & 0 & 1 & 1 & 0 & 0 \\
0 & 1 & 0 & 1 & 0 & 1 & 0 \\ \hdashline[3pt/2pt]
0 & 0 & -1 & 0 & 0 & 1 & 0 \\
-1 & 0 & 0 & 0 & 0 & 0 & 1 \\
0 & -1 & 0 & 0 & 1 & 0 & 0 \\ \hdashline[3pt/2pt]
0 & 0 & 0 & 1 & 1 & 1 & 1
\end{array}\right).
\]

For any $n$, we evaluate $\det (T_n)$ by adding the $i$-th row to
$(i+n)$-th row of $T_n$ for $1\le i\le n$. We find that
$\det(T_n)=\pm\det(T')$ where $T'$ is an $(n+1)\times (n+1)$
integral matrix. Note that all the entries of the $i$-th row of
$T'$ (where $1\le i\le n$) are one except one position, because of
the definition of $z$ (and those of $\sigma^i(z)$ for $1\le i\le
n-1$). Subtract the last row from the $i$-th row where $1\le i\le
n$. We find $\det(T')=\pm 1$.
\end{proof}


Before proving Theorem \ref{t3.6}, we define the following matrix
first. Let
\begin{align*}
{\rm Circ}(c_0,c_1,\ldots,c_{n-1})=
\left(
\begin{array}{@{}ccccc@{}}
 c_0 & c_{n-1} & \cdots & c_2 & c_1 \\
 c_1 & c_0 & c_{n-1} &  & c_2 \\
 \vdots & c_1 & c_0 & \ddots & \vdots \\
 c_{n-2} &  & \ddots & \ddots & c_{n-1} \\
 c_{n-1} & c_{n-2} & \cdots & c_1 & c_0
\end{array}
\right)
\end{align*}
be the $n\times n$ circulant matrix whose determinant is
\begin{align*}
\det({\rm Circ}(c_0,c_1,\ldots,c_{n-1}))
&=\prod_{k=0}^{n-1}(c_0+c_1\zeta_n^k+\cdots+c_{n-1}\zeta_n^{(n-1)k}).
\end{align*}

\begin{lemma}\label{lemcirc}
Let $n\geq 3$ be an odd integer.\\
{\rm (1)}
$\displaystyle{\det({\rm Circ}(\overbrace{1,\ldots,1}^\frac{n-1}{2},
\overbrace{0,\ldots,0}^\frac{n+1}{2}))=\frac{n-1}{2}}$. \\
{\rm (2)}
$\det({\rm Circ}(\overbrace{-1,\ldots,-1}^\frac{n-1}{2},0,\overbrace{1,\ldots,1}^\frac{n-3}{2},0))=-1$.
\end{lemma}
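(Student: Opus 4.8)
The plan is to evaluate both determinants directly through the displayed circulant eigenvalue formula $\det(\mathrm{Circ}(c_0,\ldots,c_{n-1}))=\prod_{k=0}^{n-1}P(\zeta_n^k)$, where $P(x)=c_0+c_1x+\cdots+c_{n-1}x^{n-1}$ is the generating polynomial of the first column. In each part I would isolate the $k=0$ factor $P(1)$, which is a trivial integer count of the nonzero coefficients, and then show that the remaining product $\prod_{k=1}^{n-1}P(\zeta_n^k)$ over the nontrivial $n$-th roots of unity equals $1$, so that $\det=P(1)$. The whole argument rests on two elementary evaluations, $\prod_{k=1}^{n-1}(1-\zeta_n^k)=n$ and $\prod_{k=1}^{n-1}(1+\zeta_n^k)=1$ (the latter because $n$ is odd), together with the coprimality facts $\gcd(\tfrac{n-1}{2},n)=\gcd(\tfrac{n+1}{2},n)=1$.

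For part (1) the generating polynomial is $P(x)=1+x+\cdots+x^{m-1}$ with $m=\tfrac{n-1}{2}$, so $P(1)=\tfrac{n-1}{2}$, and for a root $\omega=\zeta_n^k$ with $k\neq 0$ one has $P(\omega)=\tfrac{\omega^{m}-1}{\omega-1}$. The key point is that $2m=n-1\equiv-1\pmod n$ forces $\gcd(m,n)=1$, so the map $k\mapsto mk$ permutes $\{1,\ldots,n-1\}$ modulo $n$; hence $\prod_{k=1}^{n-1}(\zeta_n^{mk}-1)=\prod_{k=1}^{n-1}(\zeta_n^{k}-1)$ and the quotient $\prod_{k=1}^{n-1}P(\zeta_n^k)$ collapses to $1$. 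This gives $\det=\tfrac{n-1}{2}$.

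Part (2) follows the same template but needs one algebraic simplification. Here $P(1)=-\tfrac{n-1}{2}+\tfrac{n-3}{2}=-1$, already producing the claimed sign. For $\omega=\zeta_n^k$ with $k\neq 0$ I would write $P(\omega)=-\sum_{j=0}^{m-1}\omega^j+\sum_{j=m+1}^{2m-1}\omega^j$ (with $m=\tfrac{n-1}{2}$, so $2m-1=n-2$ and $\omega^{2m}=\omega^{-1}$), convert both geometric sums to closed form, and factor the resulting numerator as $1+\omega^{-1}-\omega^{m}-\omega^{m+1}=(1+\omega)(1-\omega^{m+1})\omega^{-1}$, obtaining
\[
P(\omega)=\frac{(1+\omega)(1-\omega^{m+1})}{\omega(\omega-1)}.
\]
Then $\prod_{k=1}^{n-1}P(\zeta_n^k)$ splits into four standard factors, evaluated by $\prod_{k=1}^{n-1}(1+\zeta_n^k)=1$, $\prod_{k=1}^{n-1}\zeta_n^k=\zeta_n^{n(n-1)/2}=1$, $\prod_{k=1}^{n-1}(\zeta_n^k-1)=n$, and --- invoking $2(m+1)=n+1\equiv1\pmod n$, hence $\gcd(m+1,n)=1$ --- the reindexing $\prod_{k=1}^{n-1}(1-\zeta_n^{(m+1)k})=\prod_{k=1}^{n-1}(1-\zeta_n^{k})=n$. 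The two factors of $n$ cancel, the product is again $1$, and so $\det=-1$.

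The conceptual content is uniform and light; the only genuinely delicate step I anticipate is the correct closed-form factorization of $P(\omega)$ in part (2), where a sign error or an off-by-one in the exponent ranges would destroy the cancellation. I would therefore double-check that factorization by expanding $(1+\omega)(1-\omega^{m+1})\omega^{-1}$ back out, and sanity-check the final value against an explicit small case such as the $3\times 3$ or $5\times5$ circulant before committing to the general argument.
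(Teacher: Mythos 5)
Your proof is correct, and it follows the same overall skeleton as the paper's: expand the circulant determinant as $\prod_{k=0}^{n-1}P(\zeta_n^k)$, peel off the $k=0$ factor (which gives $\tfrac{n-1}{2}$, resp.\ $-1$), and show the product over the nontrivial roots of unity equals $1$. Where you diverge is in how that last product is evaluated. The paper shows each individual factor is a unit of $\bm{Z}[\zeta_n]$ by exhibiting an explicit inverse (for (1), the inverse of the cyclotomic unit $\frac{1-\zeta_n^{(n-1)/2}}{1-\zeta_n}$ is $-\zeta_n(1+\zeta_n^{(n-1)/2})$, and similarly a case-split formula for (2)); the rational integer $\prod_{k=1}^{n-1}P(\zeta_n^k)$ is then a unit of $\bm{Z}$, hence $\pm1$, with the sign pinned down (implicitly) by pairing complex-conjugate factors. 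You instead write each factor in closed form and reindex: since $\tfrac{n-1}{2}$ and $\tfrac{n+1}{2}$ are invertible mod $n$, the products $\prod_{k=1}^{n-1}(1-\zeta_n^{ak})$ collapse to $\prod_{k=1}^{n-1}(1-\zeta_n^{k})=n$, and everything cancels exactly. Your route avoids having to guess and verify explicit inverses and, more importantly, delivers the value $+1$ directly rather than only up to sign, so it is arguably the more self-contained argument; the paper's route has the side benefit of identifying the factors as cyclotomic units, which is in the spirit of the rest of the paper. The one step you flagged as delicate --- the factorization $1+\omega^{-1}-\omega^{m}-\omega^{m+1}=(1+\omega)(1-\omega^{m+1})\omega^{-1}$ with $m=\tfrac{n-1}{2}$ and $\omega^{2m}=\omega^{-1}$ --- checks out, as do the auxiliary evaluations $\prod_{k=1}^{n-1}(1+\zeta_n^k)=1$ (using $n$ odd) and $\prod_{k=1}^{n-1}\zeta_n^k=1$.
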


\begin{proof}
(1) follows from
\begin{align*}
\det({\rm Circ}(\overbrace{1,\ldots,1}^\frac{n-1}{2},
\overbrace{0,\ldots,0}^\frac{n+1}{2}))
&=\prod_{k=0}^{n-1}\left(1+\zeta_n^k+\cdots+\zeta_n^{\frac{n-3}{2}k}\right)\\
&=\frac{n-1}{2} \prod_{k=1}^{n-1}\left(1+\zeta_n^k+\cdots+\zeta_n^{\frac{n-3}{2}k}\right)\\
&=\frac{n-1}{2}
\end{align*}
because $1+\zeta_n+\cdots+\zeta_n^{\frac{n-3}{2}}=\frac{1-\zeta_n^{\frac{n-1}{2}}}{1-\zeta_n}$ is a cyclotomic unit with
\[
\left(\frac{1-\zeta_n^{\frac{n-1}{2}}}{1-\zeta_n}\right)^{-1}
=\frac{1-\zeta_n}{1-\zeta_n^{\frac{n-1}{2}}}
=\frac{1-\zeta_n}{1-\zeta_n^{n-1}}(1+\zeta_n^{\frac{n-1}{2}})
=-\zeta_n(1+\zeta_n^{\frac{n-1}{2}}).
\]
(2) follows from
\begin{align*}
&\det({\rm Circ}(\overbrace{-1,\ldots,-1}^\frac{n-1}{2},0,\overbrace{1,\ldots,1}^\frac{n-3}{2},0))\\
&=\prod_{k=0}^{n-1}\left(-1-\zeta_n^k-\cdots-\zeta_n^{\frac{n-3}{2}k}+\zeta_n^{\frac{n+1}{2}k}+\cdots+\zeta_n^{(n-2)k}\right)\\
&=(-1)\prod_{k=1}^{n-1}\left(-1-\zeta_n^k-\cdots-\zeta_n^{\frac{n-3}{2}k}+\zeta_n^{\frac{n+1}{2}k}+\cdots+\zeta_n^{(n-2)k}\right)\\
&=-1
\end{align*}
because
$-1-\zeta_n-\cdots-\zeta_n^\frac{n-3}{2}+\zeta_n^\frac{n+1}{2}+\cdots+\zeta_n^{n-2}$
is a unit with
\begin{align*}
&\big(-1-\zeta_n-\cdots-\zeta_n^\frac{n-3}{2}+\zeta_n^\frac{n+1}{2}+\cdots+\zeta_n^{n-2}\big)^{-1}\\
&=\begin{cases}
\sum_{k=1}^\frac{n+1}{2} (-1)^k \zeta_n^k & n \equiv 1 \pmod{4} \\
-\sum_{k=0}^\frac{n-3}{2} (-1)^k \zeta_n^{-k} & n \equiv 3 \pmod {4}.
\end{cases}
\end{align*}
\end{proof}

\begin{theorem} \label{t3.6}
Let $G=\langle\sigma,\tau: \sigma^n=\tau^2=1,\tau\sigma\tau^{-1}=\sigma^{-1}\rangle\simeq D_n$ where $n$ is an odd integer.
Then $\widetilde{M}_+\oplus \widetilde{M}_-\simeq \bm{Z}[G]\oplus \bm{Z}[G/\langle \sigma \rangle]$.
\end{theorem}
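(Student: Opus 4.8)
The plan is to follow the template of Theorems~\ref{t3.4} and~\ref{t3.5}: realize $\widetilde{M}_+$ and $\widetilde{M}_-$ \emph{simultaneously} as sublattices of the permutation lattice $P:=\bm{Z}[G]\oplus\bm{Z}[G/\langle\sigma\rangle]$ spanned by explicit generators, and then prove these generators form a $\bm{Z}$-basis of $P$ by checking that the associated $(2n+2)\times(2n+2)$ transition matrix has determinant $\pm1$. First I fix the permutation $\bm{Z}$-basis $u_0,\dots,u_{n-1},v_0,\dots,v_{n-1}$ of $\bm{Z}[G]$ (with $u_i\leftrightarrow\sigma^i$, $v_j\leftrightarrow\sigma^j\tau$, so $\sigma:u_i\mapsto u_{i+1},\,v_j\mapsto v_{j+1}$ and $\tau:u_i\leftrightarrow v_{n-i}$, exactly as in Theorem~\ref{t3.5}) together with the basis $a_0,a_1$ of $\bm{Z}[G/\langle\sigma\rangle]$ (with $\sigma$ fixing each and $\tau:a_0\leftrightarrow a_1$). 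Since $M_+=\fn{Ind}^G_H\bm{Z}$ is generated by a single $\tau$-fixed vector and $M_-=\fn{Ind}^G_H\bm{Z}_-$ by a single $\tau$-anti-invariant vector, I aim to produce one vector $x$ with $\tau x=x$ whose $\sigma$-orbit spans a copy of $M_+$, one vector $x'$ with $\tau x'=-x'$ whose $\sigma$-orbit spans a copy of $M_-$, and two further $\sigma$-fixed vectors $y,y'$ realizing the rank-$(n+1)$ extensions.

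The construction of $x,y,x',y'$ is the step needing care. Guided by Theorems~\ref{t3.4} and~\ref{t3.5}, the vectors $y,y'$ carry coefficients $\tfrac{n\pm1}{2}$, chosen so that $\sigma y=y$, $\sigma y'=y'$ and the $\tau$-images reproduce precisely the last columns of the defining matrices $\tilde A,\tilde B$ of $\widetilde{M}_+$ (resp.\ $\tilde A,-\tilde B$ of $\widetilde{M}_-$); the vectors $x,x'$ are built from the $u_i,v_j$ (and possibly $a_0,a_1$) so that $\tau x=x$ and $\tau x'=-x'$, whence $\tau\sigma^i x=\sigma^{n-i}x$ and $\tau\sigma^i x'=-\sigma^{n-i}x'$ as demanded. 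With these four vectors written down, verifying that $\{\sigma^i x\}_i\cup\{y\}$ and $\{\sigma^i x'\}_i\cup\{y'\}$ carry exactly the actions $(\tilde A,\tilde B)$ and $(\tilde A,-\tilde B)$ is routine, yielding a $G$-embedding $\widetilde{M}_+\oplus\widetilde{M}_-\hookrightarrow P$.

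The crux, and the expected main obstacle, is to show this embedding is onto, i.e.\ that the transition determinant is a unit. Because $x,x'$ occur in full $\sigma$-orbits while $y,y',a_0,a_1$ are $\sigma$-fixed, the transition matrix is block-circulant with respect to $\langle\sigma\rangle$, and its determinant factors through the characters of $\langle\sigma\rangle$ into a trivial-character contribution and a product over the nontrivial characters $k=1,\dots,n-1$. The latter take the form of the two circulant determinants of Lemma~\ref{lemcirc} with their $k=0$ factor removed: since in part (1) the $k=0$ factor is $\tfrac{n-1}{2}$ and the full product is $\tfrac{n-1}{2}$, and in part (2) the $k=0$ factor is $-1$ and the full product is $-1$, the products $\prod_{k=1}^{n-1}$ over the nontrivial characters are $1$ in both cases. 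This is precisely the arithmetic input that the relevant cyclotomic elements are units, and it is why Lemma~\ref{lemcirc} (rather than a naive determinant bound) is needed. The surviving trivial-character block is a small explicit matrix in the entries $\tfrac{n\pm1}{2},\pm1$, whose determinant is $\pm1$ by the same elementary row/column operations used in Theorems~\ref{t3.4} and~\ref{t3.5}. Multiplying, $\det=\pm1$, which gives the isomorphism.

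Finally, I note that one is tempted to deduce the statement formally by adding the isomorphisms of Theorems~\ref{t3.4} and~\ref{t3.5}, obtaining $\widetilde{M}_+\oplus\widetilde{M}_-\oplus\bm{Z}\simeq\bm{Z}[G]\oplus\bm{Z}[G/\langle\sigma\rangle]\oplus\bm{Z}$ and then cancelling the trivial summand $\bm{Z}$. I would deliberately avoid this shortcut: cancellation of $G$-lattices can fail (that failure, tied to class numbers, is the very theme of this paper), so the direct construction above is preferable and is what the placement of Lemma~\ref{lemcirc} points to.
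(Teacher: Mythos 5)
Your strategy is the paper's: embed $\widetilde{M}_+\oplus\widetilde{M}_-$ into $\bm{Z}[G]\oplus\bm{Z}[G/\langle\sigma\rangle]$ by exhibiting four generators (a $\sigma$-orbit generator and a $\sigma$-fixed vector for each summand) and then prove unimodularity of the $(2n+2)\times(2n+2)$ transition matrix, with Lemma~\ref{lemcirc} supplying the arithmetic input that the two circulant determinants are $\frac{n-1}{2}$ and $-1$; your closing observation that one cannot simply cancel $\bm{Z}$ (or $\bm{Z}[G/\langle\tau\rangle]$) from the sum of Theorems~\ref{t3.4} and~\ref{t3.5} is correct and well taken. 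But there is a genuine gap: you never produce the vectors $x,y,x',y'$. Their existence \emph{is} the theorem. The constraints you list (a $\tau$-eigenvector generating a free $\sigma$-orbit, two $\sigma$-fixed vectors with coefficients $\frac{n\pm 1}{2}$, compatibility with the last columns of $\tilde A$, $\pm\tilde B$) are satisfied by many quadruples that generate only a finite-index sublattice, and whether the index is $1$ depends entirely on the specific choice; the paper's own generators $x$, $y_0$, $z$, $y_1$ (built from carefully chosen half-length blocks of consecutive $u_i$'s and $v_j$'s together with $t_0,t_1$) were, as the introduction admits, found by computer experiment. A proof must exhibit them and then carry out the determinant computation for that specific choice.

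Two smaller points. First, your normalization ``$\tau x=x$'' is not what happens for the paper's generator: one finds $\tau(x)=\sigma^{n-2}(x)$, so the genuinely $\tau$-fixed element of the orbit is $\sigma^{n-1}(x)$ (this is harmless --- it amounts to a cyclic relabelling, visible in the paper's ordering $\sigma^{(n-3)/2}(x),\ldots,x,\ldots,\sigma^{(n-5)/2}(x)$ --- but it must be accounted for when matching against $\tilde B$). Second, your character-theoretic factorization of $\det Q$ is stated too optimistically: for each nontrivial character $\chi^k$ of $\langle\sigma\rangle$ the corresponding block is a $2\times 2$ matrix whose determinant is $\hat c_{11}(k)\hat c_{22}(k)-\hat c_{12}(k)\hat c_{21}(k)$, built from the Fourier coefficients of all four circulant blocks of the $u$--$v$ part; it equals a product of eigenvalues of the two circulants in Lemma~\ref{lemcirc} only after the block matrix has been made triangular. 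The paper achieves exactly this by the explicit column operations producing $Q_0$ and then $Q_1$, which replace the lower-right block by ${\rm Circ}(0,1,\ldots,1,0,-1,\ldots,-1)$ and kill the upper-right block; without specifying the generators and performing that reduction, the claim that the nontrivial-character contribution is $1$ does not follow.
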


\begin{proof}
Let $u_0,u_1,\ldots,u_{n-1},v_0,v_1,\ldots,v_{n-1}$ be
a $\bm{Z}$-basis of $\bm{Z}[G]$ where $u_i$,
$v_j$ correspond to $\sigma^i$, $\sigma^j\tau$ in $\bm{Z}[G]$.
The actions of $\sigma$ and $\tau$ are given by
\begin{align*}
\sigma &: u_i\mapsto u_{i+1},~ v_j\mapsto v_{j+1},\\
\tau &: u_i\leftrightarrow v_{n-i}
\end{align*}
where the index of $u_i$ or $v_j$ is understood modulo $n$. Let
$t_0$, $t_1$ be the $\bm{Z}$-basis of
$\bm{Z}[G]/\langle\sigma\rangle]$ correspond to 1, $\tau$. Then
$\sigma:t_0\mapsto t_0$, $t_1\mapsto t_1$,
$\tau:t_0\leftrightarrow t_1$.

Define $x,y_0,z,y_1\in \bm{Z}[G]\oplus \bm{Z}[G/\langle\sigma\rangle]$ by
\begin{align*}
x &=u_0+\left(\sum_{\frac{n+3}{2}\leq i\leq n-1}u_i\right)
+\left(\sum_{2\leq j\leq \frac{n+1}{2}}v_j\right)+t_0+t_1,\\
y_0&=\frac{n-1}{2}\left(\sum_{0\le j\le n-1} v_j\right)+t_0+(n-1)t_1,\\
z &=u_0+u_1+\left(\sum_{\frac{n+3}{2}\le i\le n-1} u_i\right)
-\left(\sum_{1 \le j\le \frac{n+1}{2}} v_j\right)+t_0-t_1,\\
y_1&=\left(\sum_{0\leq i\leq n-1}u_i\right)
-\frac{n-1}{2}\left(\sum_{0\le j\le n-1} v_j\right)+t_0-(n-1)t_1.
\end{align*}

It is easy to verify that
\[
\left(\bigoplus_{0\le i\le n-1} \bm{Z}\cdot \sigma^i(x)\right)\oplus \bm{Z}\cdot y_0\simeq \widetilde{M}_+, \quad
\left(\bigoplus_{0\le i\le n-1} \bm{Z}\cdot \sigma^i(z)\right)\oplus \bm{Z}\cdot y_1\simeq \widetilde{M}_-.
\]

It remains to show that $\sigma^{\frac{n-3}{2}}(x)$, $\ldots$,
$\sigma^{n-1}(x)$, $x$, $\sigma(x)$, $\ldots$,
$\sigma^{\frac{n-5}{2}}(x)$, $y_0$, $\sigma^{\frac{n-3}{2}}(z)$,
$\ldots$, $\sigma^{n-1}(z)$, $z$, $\sigma(z)$, $\ldots$,
$\sigma^{\frac{n-5}{2}}(z)$, $y_1$ form a $\bm{Z}$-basis of
$\bm{Z}[G]\oplus \bm{Z}[G/\langle\sigma\rangle]$. Let $Q$ be the
coefficient matrix of $\sigma^{\frac{n-3}{2}}(x)$, $\ldots$,
$\sigma^{n-1}(x)$, $x$, $\sigma(x)$, $\ldots$,
$\sigma^{\frac{n-5}{2}}(x)$, $y_0$, $\sigma^{\frac{n-3}{2}}(z)$,
$\ldots$, $\sigma^{n-1}(z)$, $z$, $\sigma(z)$, $\ldots$,
$\sigma^{\frac{n-5}{2}}(z)$, $y_1$ with respect to the
$\bm{Z}$-basis $u_0, u_1,\ldots,u_{n-1},v_0, v_1,\ldots,v_{n-1},
t_0,t_1$.

The matrix $Q$ is defined as
\begin{align*}
Q=
\left(\begin{array}{@{}ccc;{3pt/2pt}c;{3pt/2pt}ccc;{3pt/2pt}c@{}}
 & & & 0 & & & & 1\\
\multicolumn{3}{c;{3pt/2pt}}{{\rm Circ}(\overbrace{1,\ldots,1}^\frac{n-1}{2},
\overbrace{0,\ldots,0}^\frac{n+1}{2})} & \vdots &
\multicolumn{3}{c;{3pt/2pt}}{{\rm Circ}(\overbrace{1,\ldots,1}^\frac{n+1}{2},
\overbrace{0,\ldots,0}^\frac{n-1}{2})} & \vdots\\
 & & & 0 & & & & 1\\\hdashline[3pt/2pt]
 & & & \frac{n-1}{2} & & & & -\frac{n-1}{2}\\
\multicolumn{3}{c;{3pt/2pt}}{{\rm Circ}(\overbrace{0,\ldots,0}^\frac{n+1}{2},
\overbrace{1,\ldots,1}^\frac{n-1}{2})} & \vdots &
\multicolumn{3}{c;{3pt/2pt}}{{\rm Circ}(\overbrace{0,\ldots,0}^\frac{n-1}{2},
\overbrace{-1,\ldots,-1}^\frac{n+1}{2})} & \vdots\\
 & & & \frac{n-1}{2} & & & & -\frac{n-1}{2}\\\hdashline[3pt/2pt]
\multicolumn{3}{c;{3pt/2pt}}{1\qquad\quad \cdots\qquad\quad 1} & 1 & \multicolumn{3}{c;{3pt/2pt}}{1\qquad\quad \cdots\qquad\quad 1} & 1\\\hdashline[3pt/2pt]
\multicolumn{3}{c;{3pt/2pt}}{\underbrace{1\qquad\quad \cdots\qquad\quad 1}_{n}} & n-1 & \multicolumn{3}{c;{3pt/2pt}}{\underbrace{-1\qquad\ \ \cdots\qquad\ \ -1}_{n}}  & -(n-1)
\end{array}\right)
\begin{array}{c}
\Bigg\}\ n \\ \\ \\ \Bigg\}\ n \\ \\ \\ \\
\end{array}.
\end{align*}

For examples, when $n=3,5$, $Q$ is of the form
\[
\left(
\begin{array}{@{}ccc;{3pt/2pt}c;{3pt/2pt}ccc;{3pt/2pt}c@{}}
 1 & 0 & 0 & 0 & 1 & 0 & 1 & 1 \\
 0 & 1 & 0 & 0 & 1 & 1 & 0 & 1 \\
 0 & 0 & 1 & 0 & 0 & 1 & 1 & 1 \\\hdashline[3pt/2pt]
 0 & 1 & 0 & 1 & 0 & -1 & -1 & -1 \\
 0 & 0 & 1 & 1 & -1 & 0 & -1 & -1 \\
 1 & 0 & 0 & 1 & -1 & -1 & 0 & -1 \\\hdashline[3pt/2pt]
 1 & 1 & 1 & 1 & 1 & 1 & 1 & 1 \\\hdashline[3pt/2pt]
 1 & 1 & 1 & 2 & -1 & -1 & -1 & -2 \\
\end{array}
\right)
\]
and
\[
\left(
\begin{array}{@{}ccccc;{3pt/2pt}c;{3pt/2pt}ccccc;{3pt/2pt}c@{}}
 1 & 0 & 0 & 0 & 1 & 0 & 1 & 0 & 0 & 1 & 1 & 1 \\
 1 & 1 & 0 & 0 & 0 & 0 & 1 & 1 & 0 & 0 & 1 & 1 \\
 0 & 1 & 1 & 0 & 0 & 0 & 1 & 1 & 1 & 0 & 0 & 1 \\
 0 & 0 & 1 & 1 & 0 & 0 & 0 & 1 & 1 & 1 & 0 & 1 \\
 0 & 0 & 0 & 1 & 1 & 0 & 0 & 0 & 1 & 1 & 1 & 1 \\\hdashline[3pt/2pt]
 0 & 1 & 1 & 0 & 0 & 2 & 0 & -1 & -1 & -1 & 0 & -2 \\
 0 & 0 & 1 & 1 & 0 & 2 & 0 & 0 & -1 & -1 & -1 & -2 \\
 0 & 0 & 0 & 1 & 1 & 2 & -1 & 0 & 0 & -1 & -1 & -2 \\
 1 & 0 & 0 & 0 & 1 & 2 & -1 & -1 & 0 & 0 & -1 & -2 \\
 1 & 1 & 0 & 0 & 0 & 2 & -1 & -1 & -1 & 0 & 0 & -2 \\\hdashline[3pt/2pt]
 1 & 1 & 1 & 1 & 1 & 1 & 1 & 1 & 1 & 1 & 1 & 1 \\\hdashline[3pt/2pt]
 1 & 1 & 1 & 1 & 1 & 4 & -1 & -1 & -1 & -1 & -1 & -4 \\
\end{array}
\right).
\]

\ We will show that det$(Q)=-1$. For a given matrix, we denote by
$(Ci)$ its $i$-th column. When we say that, apply $(Ci)+(C1)$ on
the $i$-th column, we mean the column operation by adding the 1-st
column to the $i$-th column.

On the $(2n+2)$-th column, apply $C(2n+2)+C(n+1)$. On the
$(n+1)$-th column, apply $C(n+1)+\frac{n-1}{2}(C(2n+2))$. On the
$(n+1)$-th column, apply $C(n+1)-(C1)-\cdots-(Cn)$. Then all the
entries of the $(n+1)$-th column are zero except for the last
$(2n+2)$-th entry, which is $-1$. Hence it is enough to show
$\det(Q_0)=1$ where $Q_0$ is a $(2n+1)\times(2n+1)$ matrix defined
by
\begin{align*}
Q_0=\left(\begin{array}{@{}ccc;{3pt/2pt}ccc;{3pt/2pt}c@{}}
 & &  & & & & 1\\
\multicolumn{3}{c;{3pt/2pt}}{{\rm Circ}(\overbrace{1,\ldots,1}^\frac{n-1}{2},
\overbrace{0,\ldots,0}^\frac{n+1}{2})} &
\multicolumn{3}{c;{3pt/2pt}}{{\rm Circ}(\overbrace{1,\ldots,1}^\frac{n+1}{2},
\overbrace{0,\ldots,0}^\frac{n-1}{2})} & \vdots\\
 & & & & & & 1\\\hdashline[3pt/2pt]
 & & & & & & 0\\
\multicolumn{3}{c;{3pt/2pt}}{{\rm Circ}(\overbrace{0,\ldots,0}^\frac{n+1}{2},
\overbrace{1,\ldots,1}^\frac{n-1}{2})} &
\multicolumn{3}{c;{3pt/2pt}}{{\rm Circ}(\overbrace{0,\ldots,0}^\frac{n-1}{2},
\overbrace{-1,\ldots,-1}^\frac{n+1}{2})} & \vdots\\
 & & & & & & 0 \\\hdashline[3pt/2pt]
\multicolumn{3}{c;{3pt/2pt}}{\underbrace{1\qquad\quad\ \cdots\qquad\quad\ 1}_{n}}
 & \multicolumn{3}{c;{3pt/2pt}}{\underbrace{1\qquad\quad\ \cdots\qquad\quad\ 1}_{n}}  & 2
\end{array}\right)
\begin{array}{c}
 \Bigg\}\ n \\ \\ \\ \Bigg\}\ n \\ \\ \\
\end{array}.
\end{align*}

On the $(n+i)$-th column, apply $C(n+i)+C(f(\frac{n+1}{2}+i))$ for
$i=1,\ldots n$ where
\[
f(k)=
\begin{cases}
k & k \leq n \\
k-n & k>n.
\end{cases}
\]
On the $(n+i)$-th column, apply $C(n+i)-C(2n+1)$ for
$i=1,\ldots,n$. On the $(2n+1)$-th column, apply
$C(2n+1)-\frac{2}{n-1}\{(C1)+\cdots+(Cn)\}$. Thus we get
$\det(Q_0)=\det(Q_1)$ where
\begin{align*}
Q_1=\left(\begin{array}{@{}ccc;{3pt/2pt}ccc;{3pt/2pt}c@{}}
 & &  & & & & 0\\
\multicolumn{3}{c;{3pt/2pt}}{{\rm Circ}(\overbrace{1,\ldots,1}^\frac{n-1}{2},
\overbrace{0,\ldots,0}^\frac{n+1}{2})} &
&  & \mathbf{O} &\vdots\\
 & & & & & & 0\\\hdashline[3pt/2pt]
 & & & & & & 0\\
\multicolumn{3}{c;{3pt/2pt}}{{\rm Circ}(\overbrace{0,\ldots,0}^\frac{n+1}{2},
\overbrace{1,\ldots,1}^\frac{n-1}{2})} &
\multicolumn{3}{c;{3pt/2pt}}{{\rm Circ}(0,\overbrace{1,\ldots,1}^\frac{n-3}{2},0,
\overbrace{-1,\ldots,-1}^\frac{n-1}{2})} & \vdots\\
 & & & & & & 0 \\\hdashline[3pt/2pt]
\multicolumn{3}{c;{3pt/2pt}}{\underbrace{1\qquad\quad \cdots\qquad\quad 1}_{n}}
 & \multicolumn{3}{c;{3pt/2pt}}{\underbrace{0\qquad\qquad \cdots\qquad\qquad 0}_{n}}  & -\frac{2}{n-1}
\end{array}\right)
\begin{array}{c}
 \Bigg\}\ n \\ \\ \\ \Bigg\}\ n \\ \\ \\
\end{array}.
\end{align*}
Because of Lemma \ref{lemcirc} and
\[
\det({\rm Circ}(0,\overbrace{1,\ldots,1}^\frac{n-3}{2},0,
\overbrace{-1,\ldots,-1}^\frac{n-1}{2})) =\det({\rm
Circ}(\overbrace{-1,\ldots,-1}^\frac{n-1}{2},0,\overbrace{1,\ldots,1}^\frac{n-3}{2},0)),
\] we find $\det(Q_1)=1$.
\end{proof}


\begin{prop} \label{p3.7}
Let $G \simeq D_n$ where $n$ is an odd integer. The all the flabby
$G$-lattices are invertible. Consequently, if $k$ is a field
admitting a $D_n$-extension, then all the $G$-tori over $k$ are
retract $k$-rational.
\end{prop}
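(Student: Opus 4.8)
The plan is to derive the first assertion directly from the Endo--Miyata criterion of Theorem \ref{t2.7}, which asserts that every flabby $G$-lattice is invertible if and only if all Sylow subgroups of $G$ are cyclic. Thus the whole problem reduces to the purely group-theoretic verification that the Sylow subgroups of $D_n$ are cyclic whenever $n$ is odd.

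First I would note $|G|=2n$ and, using that $n$ is odd, write $2n=2\cdot\prod_i q_i^{a_i}$ with the $q_i$ distinct odd primes. The Sylow $2$-subgroup has order $2$, generated by a reflection such as $\tau$, hence is cyclic. For an odd prime $q\mid n$, any element of $q$-power order has odd order, and in $D_n$ every element of odd order lies in the rotation subgroup $\langle\sigma\rangle$, since each of the $n$ reflections has order $2$. Therefore a Sylow $q$-subgroup sits inside the cyclic group $\langle\sigma\rangle$ of order $n$ and is itself cyclic. With all Sylow subgroups cyclic, Theorem \ref{t2.7} yields that every flabby $G$-lattice is invertible.

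For the consequence, I would take a field $k$ admitting a $D_n$-extension and a $G$-torus $T$ over $k$; by Definition \ref{d1.9} there is a Galois extension $K/k$ with $\fn{Gal}(K/k)\simeq G$ splitting $T$, and the function field of $T$ is $K(M)^G$ for $M$ the character lattice of $T$. Since $[M]^{fl}\in F_G$ is represented by a flabby lattice, the first part shows this representative is invertible, so $[M]^{fl}$ is invertible; Theorem \ref{t2.6}(2) then gives that $K(M)^G$, and hence $T$, is retract $k$-rational. The argument has no real obstacle once Theorem \ref{t2.7} is granted: its entire content is the elementary observation above, and the only point requiring a moment's care is that the flabby class monoid $F_G$ is built from flabby lattices, so ``all flabby lattices invertible'' immediately delivers the invertibility of the representative required by Theorem \ref{t2.6}(2).
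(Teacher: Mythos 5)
Your proposal is correct and follows essentially the same route as the paper: both invoke Theorem \ref{t2.7} after observing that the Sylow subgroups of $D_n$ are cyclic when $n$ is odd, and then apply Theorem \ref{t2.6}(2) to the flabby class $[M]^{fl}$ of the character lattice to conclude retract $k$-rationality. Your write-up merely spells out the elementary Sylow verification that the paper leaves implicit.
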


\begin{proof}
Since all the Sylow subgroups of $G$ are cyclic, the flabby
$G$-lattices are invertible by Theorem \ref{t2.7}. For a $G$-torus
over $k$, its function field is $K(M)^G$ for some $G$-lattice $M$,
some $G$-extension $K/k$. Since $[M]^{fl}$ is invertible, we may
apply Theorem \ref{t2.6}.
\end{proof}

\section{\boldmath Integral representations of $D_p$}

Let $G=\langle\sigma,\tau:\sigma^p=\tau^2=1,\tau\sigma\tau^{-1}=\sigma^{-1}\rangle\simeq D_p$ where $p$ is an odd prime number.
Define $H=\langle\tau\rangle$.

Denote $\zeta_p$ a primitive $p$-th root of unity,
$R=\bm{Z}[\zeta_p]$, $R_0=\bm{Z}[\zeta_p+\zeta^{-1}_p]$, $h_p^+$
the class number of $R_0$, $P=\langle 1-\zeta_p\rangle$ the unique
maximal ideal of $R$ lying over $\langle p\rangle\subset \bm{Z}$.
We may regard $R$ as a $G$-lattice by defining, for any $\alpha\in
R$, $\sigma\cdot\alpha=\zeta_p\alpha$,
$\tau\cdot\alpha=\bar{\alpha}$ the complex conjugate of $\alpha$.
Note that $R$ is a $G$-lattice of rank $p-1$; it is even a lattice
over $\bm{Z}[G]/\Phi_p(\sigma)\simeq \bm{Z}[\zeta]\circ H$, the
twisted group ring defined in Definition \ref{d3.2}.

If $I\subset R$ is an ideal with $\sigma(I)\subset I$, $\tau(I)\subset I$,
then $I$ may be regarded as a $G$-lattice also.
In particular,
if $\c{A}\subset R_0$ is an ideal,
then $R\c{A}$, $P\c{A}$ are $G$-lattices of rank $p-1$.

A complete list of non-isomorphic indecomposable $G$-lattices was
proved by Myrna Pike Lee \cite{Le}. In the following we adopt the
reformulation of Lee's Theorem in \cite[page 752, Theorem
(34.51)]{CR}. In the following theorem $\bm{Z}_-$ is the
$G$-lattice on which $\sigma$ acts trivially, and $\tau$ acts as
multiplication by $-1$.

\begin{theorem}[{M.\ P.\ Lee \cite[page 752]{Le,CR}}] \label{t4.1}
Let $G\simeq D_p$ where $p$ is an odd prime number. Let $\c{A}$
range over a full set of representatives of the $h_p^+$ ideal
classes of $R_0$ where $R_0=\bm{Z}[\zeta_p+\zeta_p^{-1}]$,
$R=\bm{Z}[\zeta_p]$, $P=\langle 1-\zeta_p\rangle$. Then there are
precisely $7h_p^++3$ isomorphism classes of indecomposable
$G$-lattices, and there are represented by
\[
~ \bm{Z},~ \bm{Z}_-,~ \bm{Z}[H]\simeq
\bm{Z}[G/\langle\sigma\rangle]; ~ R\c{A},~ P\c{A};
\]
and the non-split extensions
\begin{align*}
0 &\to P\c{A} \to V_{\c{A}}\to \bm{Z}\to 0,& 0 &\to R\c{A}\to X_{\c{A}} \to \bm{Z}_-\to 0, \\
0 &\to R\c{A} \to (Y_0)_{\c{A}} \to \bm{Z}[H]\to 0,& 0 &\to P\c{A}\to (Y_1)_{\c{A}}\to \bm{Z}[H]\to 0, \\
0 &\to R\c{A}\oplus P \to (Y_2)_{\c{A}} \to \bm{Z}[H]\to 0.
\end{align*}
\end{theorem}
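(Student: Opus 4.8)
The statement is Lee's classification of indecomposable $D_p$-lattices, and the plan is to prove it by the conductor-square (Milnor patching) method that underlies the reformulation \cite[(34.51)]{CR}. The starting point is the fibre product diagram of Definition~\ref{d3.2}, which realizes $\bm{Z}[G]$ as the pullback of $\Lambda_1:=\bm{Z}[\zeta_p]\circ H$ and $\bm{Z}[H]$ over $\overline{\bm{Z}}[H]$ with $\overline{\bm{Z}}=\bm{Z}/p\bm{Z}$. The first thing I would record is the simplification coming from $p$ being odd: then $2$ is a unit modulo $p$, so $\overline{\bm{Z}}[H]\cong(\bm{Z}/p\bm{Z})\times(\bm{Z}/p\bm{Z})$ is semisimple, its two simple factors being the $\tau=+1$ and $\tau=-1$ eigenlines. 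Consequently any gluing datum over $\overline{\bm{Z}}[H]$ splits into two independent matchings on the symmetric and antisymmetric parts, which is what keeps the bookkeeping finite and explicit.

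Next I would invoke the lattice form of Milnor patching for this conductor square (the theory around \cite[(34.43)]{CR}): a $\bm{Z}[G]$-lattice $M$ is determined up to isomorphism by a triple $(M_1,M_2,\theta)$, where $M_1=M/\langle f(\sigma)\rangle M$ is a $\Lambda_1$-lattice, $M_2=M/(\sigma-1)M$ is a $\bm{Z}[H]$-lattice, and $\theta$ is an isomorphism $\overline{\bm{Z}}\otimes M_1\xrightarrow{\ \sim\ }\overline{\bm{Z}}\otimes M_2$ of $\overline{\bm{Z}}[H]$-modules; every compatible triple arises from some lattice. This reduces the problem to (i) classifying the indecomposable lattices over the two top corners, and (ii) enumerating the admissible patchings.

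For the corners, over $\bm{Z}[H]=\bm{Z}[C_2]$ the indecomposable lattices are the three classical ones $\bm{Z}$, $\bm{Z}_-$, $\bm{Z}[H]$. Over $\Lambda_1$ I would use that, for $p$ prime, $\Lambda_1$ is a hereditary order (Definition~\ref{d3.2}); since the twisted group ring has trivial cocycle, $\bm{Q}(\zeta_p)\circ H\cong M_2(\bm{Q}(\zeta_p+\zeta_p^{-1}))$ is split, so $\Lambda_1$ is a hereditary, non-maximal order in a $2\times 2$ matrix algebra over $R_0$. The indecomposable lattices over such an order form two families indexed by the ideal class group $C(R_0)$, realized here as $R\c{A}$ and $P\c{A}$ with $\c{A}$ ranging over class representatives; the prime $P$ appears precisely as the ramified prime at which $\Lambda_1$ fails to be maximal. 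This gives $2h_p^+$ building blocks on the $\Lambda_1$ side.

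Finally I would carry out the patching and match symbols. A trivial $\Lambda_1$-part glued to each of the three $\bm{Z}[H]$-indecomposables returns $\bm{Z}$, $\bm{Z}_-$, $\bm{Z}[H]$; a trivial $\bm{Z}[H]$-part with $R\c{A}$ or $P\c{A}$ returns those $2h_p^+$ lattices. The remaining, genuinely glued lattices come from nonzero $\theta$: tracking which simple factor $\tau=\pm1$ of $\overline{\bm{Z}}[H]$ is identified produces exactly the five extension families $V_{\c{A}}$, $X_{\c{A}}$, $(Y_0)_{\c{A}}$, $(Y_1)_{\c{A}}$, $(Y_2)_{\c{A}}$ of the statement, each indexed by $\c{A}\in C(R_0)$, for a total of $3+2h_p^++5h_p^+=7h_p^++3$ classes; the extensions are non-split exactly because $\theta\ne 0$. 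The hard part will be this last step: showing the list of admissible gluings is both complete and irredundant, i.e.\ that every indecomposable triple is isomorphic to one on the list and that distinct symbols give non-isomorphic lattices. This is where $C(R_0)$ genuinely enters, through the hereditary structure of $\Lambda_1$, and where a local analysis at $p$---the only prime where the three corners interact---is needed to pin down the five mixed families and to confirm that the $(Y_i)_{\c{A}}$ account for all the higher-rank indecomposables without overlap.
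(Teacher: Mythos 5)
First, a point of comparison: the paper does not prove Theorem \ref{t4.1} at all --- it is quoted from Lee \cite{Le} in the reformulation of \cite[(34.51)]{CR} --- so your proposal is competing with the cited literature rather than with an argument in the text. Your strategy is indeed the standard one behind that reformulation: the conductor square of Definition \ref{d3.2}, the splitting $\overline{\bm{Z}}[H]\cong \bm{Z}/p\bm{Z}\times\bm{Z}/p\bm{Z}$ into $\tau=\pm 1$ eigenlines (valid because $p$ is odd), the three indecomposables over $\bm{Z}[H]$, and the $2h_p^+$ indecomposables $R\c{A}$, $P\c{A}$ over the hereditary order $\bm{Z}[\zeta_p]\circ H$. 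Two corrections to the setup. The corner modules must be the images of $M$ in the two rational components, i.e.\ $M_1=M/M^{\langle\sigma\rangle}$ and $M_2=M/\{x\in M: f(\sigma)x=0\}$; the quotients $M/\langle f(\sigma)\rangle M$ and $M/(\sigma-1)M$ that you wrote can have torsion (already for $M=\bm{Z}$) and are not lattices over the top corners. Also, the gluing quotient of $R\c{A}$ is $R\c{A}/PR\c{A}$ with \emph{trivial} $\tau$-action, while $P\c{A}/P^2\c{A}$ carries the sign action; this eigenvalue bookkeeping is what forces, for instance, $V_{\c{A}}$ to be the pullback of $R\c{A}$ and $\bm{Z}$ over the $\tau=+1$ line, whence the kernel of $V_{\c{A}}\to\bm{Z}$ is $P\c{A}$ and not $R\c{A}$ --- exactly matching the extensions displayed in the statement. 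You should make this computation explicit rather than gesturing at ``tracking which simple factor is identified.''

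The genuine gap is the one you flag yourself and then do not close: completeness and irredundancy of the list of gluings. Milnor patching reduces an arbitrary lattice to a triple $(M_1,M_2,\theta)$ with $M_1\cong\bigoplus_i R\c{A}_i\oplus\bigoplus_j P\c{B}_j$ and $M_2\cong\bm{Z}^{(a)}\oplus\bm{Z}_-^{(b)}\oplus\bm{Z}[H]^{(c)}$, but the theorem is precisely the assertion that every such $\theta$ can be brought, by the action of $\fn{Aut}(M_1)\times\fn{Aut}(M_2)$, to a direct sum of the ten listed building blocks: one must show, e.g., that no indecomposable arises from gluing two copies of $R\c{A}$ to one copy of $\bm{Z}[H]$, that the glued lattice depends only on the ideal class of $\c{A}$ and not on the unit of $\bm{Z}/p\bm{Z}$ defining $\theta$ (this is why $\fn{Ext}^1_{\bm{Z}[G]}(\bm{Z},P)\cong\bm{Z}/p\bm{Z}$ yields only one indecomposable $V$, a point the paper has to invoke separately in Lemma \ref{l4.4}), and that the resulting lattices are pairwise non-isomorphic. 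That matrix-reduction argument, carried out locally at $p$ where the corners interact, is where essentially all of Lee's work lies; without it your outline establishes only that the listed $7h_p^++3$ lattices exist, not that they exhaust the indecomposables. As it stands the proposal is a correct reading guide to \cite[\S 34]{CR} rather than a proof.
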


\begin{remark}
In the above theorem, the words ``the non-split extensions $0 \to
P\c{A} \to V_{\c{A}}\to \bm{Z}\to 0$, ....." means that, if $M$ is
an indecomposable $G$-lattice satisfying that $0 \to P\c{A} \to M
\to \bm{Z}\to 0$, then $M \simeq V_{\c{A}}$ as $G$-lattices, i.e.
there is essentially a unique indecomposable lattice arising from
an extension of $\bm{Z}$ by $P\c{A}$. See \cite[pages 711-730]{CR}
and the proof in Lee's paper \cite{Le}.
\end{remark}

\begin{defn} \label{d4.2}
In Theorem \ref{t4.1}, when $\c{A}$ is a principal ideal in $R_0$,
we will write the corresponding $G$-lattices by $R$, $P$, $0\to
P\to V\to \bm{Z} \to 0$, $0\to R\to X\to \bm{Z}\to 0$, $0\to R\to
Y_0 \to \bm{Z}[H] \to 0$, $0\to P\to Y_1 \to \bm{Z}[H]\to 0$,
$0\to R\oplus P\to Y_2 \to \bm{Z}[H]\to 0$.

If $l$ is a prime number of $\bm{Z}$, denote by $\bm{Z}_l=\{m/n:
m,n\in\bm{Z}, l\nmid n\}$ the localization of $\bm{Z}$ at the
prime ideal $\langle l\rangle$. Since $(R_0)_l=\bm{Z}_l
[\zeta_p+\zeta_p^{-1}]$ is a semi-local principal ideal domain, we
find that $\c{A}_l$ is a principal ideal in $(R_0)_l$ for any
prime number $l$.

It follows that, if $\c{A}$ is any ideal in $R_0$, then $R$ and
$R\c{A}$, $P$ and $P\c{A}$, $V$ and $(V)_{\c{A}}$, $X$ and
$X_{\c{A}}$, ... belong to the same genus, i.e. they become
isomorphic after localization at any prime ideal $\langle
l\rangle$ of $\bm{Z}$ (see \cite[page 642]{CR}).

\end{defn}

We will show that $M_+$, $M_-$, $N_+$, $N_-$, $\widetilde{M}_+$,
$\widetilde{M}_-$ defined in Section 3 are isomorphic to $V$, $X$,
$R$, $P$, $Y_0$, $Y_1$ when $n=p$ is an odd prime number.

\begin{lemma} \label{l4.3}
Let $N_+$ and $N_-$ be the $G$-lattices with $G=\langle
\sigma,\tau:\sigma^n=\tau^2=1,\tau\sigma\tau^{-1}=\sigma^{-1}\rangle
\simeq D_n$ where $n$ is an odd integer. If $n=p$ is an odd prime
number, then $N_+\simeq R$ and $N_-\simeq P$.
\end{lemma}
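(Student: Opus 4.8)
The plan is to match the explicit integral representations on both sides. By construction $N_+$ and $N_-$ arise from $M_+$, $M_-$ by killing $f(\sigma)$, and on the basis $\{\sigma^i u : 1\le i\le p-1\}$ of Definition~\ref{d3.1} the matrices $A'$, $B'$ of Section~3 show that $\sigma$ acts on each of $N_+$, $N_-$ as the companion matrix of $\Phi_p$, while $\tau$ acts by the reversal $e_i\mapsto \pm e_{p-i}$, with sign $+$ for $N_+$ and $-$ for $N_-$. On the other side $R=\bm{Z}[\zeta_p]$ carries $\sigma$ as multiplication by $\zeta_p$ and $\tau$ as complex conjugation. The key observation is that $\{\zeta_p,\zeta_p^2,\ldots,\zeta_p^{p-1}\}$ is again a $\bm{Z}$-basis of $R$ (it is the image of the standard basis $1,\zeta_p,\ldots,\zeta_p^{p-2}$ under the unit $\zeta_p$), and with respect to it multiplication by $\zeta_p$ is precisely the companion matrix $A'$, while conjugation sends $\zeta_p^i\mapsto \zeta_p^{-i}=\zeta_p^{p-i}$, i.e.\ the reversal $B'$.

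First I would treat $N_+$. Define $\phi\colon N_+\to R$ on the above basis by $\phi(\sigma^i u)=\zeta_p^i$ for $1\le i\le p-1$. Since $\zeta_p$ is a unit of $R$, the images form a $\bm{Z}$-basis and $\phi$ is a $\bm{Z}$-module isomorphism. I then check $\sigma$- and $\tau$-equivariance against $A'$ and $B'$. For $\sigma$ the only nonroutine entry is the last column, where $\sigma\cdot\sigma^{p-1}u=-(\sigma u+\cdots+\sigma^{p-1}u)$ must map to $\zeta_p^p=1$; indeed $-(\zeta_p+\cdots+\zeta_p^{p-1})=1$ because $1+\zeta_p+\cdots+\zeta_p^{p-1}=0$. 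For $\tau$ one has $\phi(\tau\cdot\sigma^i u)=\phi(\sigma^{p-i}u)=\zeta_p^{p-i}=\overline{\zeta_p^{\,i}}=\tau\cdot\phi(\sigma^i u)$. This yields $N_+\simeq R$.

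For $N_-$ the cleanest route is to notice that $N_-$ differs from $N_+$ only by negating the $\tau$-action, i.e.\ $N_-\simeq N_+\otimes\bm{Z}_-\simeq R\otimes\bm{Z}_-$, which is $R$ with $\sigma$ unchanged and $\tau$ acting by $\alpha\mapsto-\overline{\alpha}$. To identify this twisted lattice with $P=\langle 1-\zeta_p\rangle$, I would multiply by a generator $c$ of the principal ideal $P$ that is \emph{purely imaginary}, $\overline{c}=-c$. Then $\alpha\mapsto c\alpha$ is $\sigma$-equivariant (multiplication by $\zeta_p$ commutes with multiplication by $c$) and intertwines $\alpha\mapsto-\overline{\alpha}$ on $R$ with conjugation on $P$, since $\overline{c\alpha}=\overline{c}\,\overline{\alpha}=-c\,\overline{\alpha}$; it is bijective onto $cR=P$. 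The candidate is $c=\zeta_p^{(p-1)/2}(1-\zeta_p)$: it generates $P$ because $\zeta_p^{(p-1)/2}$ is a unit, and using $\overline{1-\zeta_p}=-\zeta_p^{-1}(1-\zeta_p)$ together with $-(p+1)/2\equiv(p-1)/2\pmod p$ one verifies $\overline{c}=-c$. Hence $R\otimes\bm{Z}_-\simeq P$ and therefore $N_-\simeq P$.

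I expect the only genuine obstacle to be this last step, namely producing the anti-invariant generator $c$ of $P$. The $\sigma$-direction is automatic on both sides (companion matrix versus multiplication by $\zeta_p$, and $P$ is $\bm{Z}[\zeta_p]$-isomorphic to $R$ simply because it is principal), so all the content lies in matching the $\tau$-action; for $N_-$ this forces the search for a generator of $P$ on which conjugation acts as $-1$. Everything else is the routine change-of-basis and equivariance bookkeeping, which I would not write out in full.
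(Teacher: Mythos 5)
Your proposal is correct and is essentially the paper's own proof: the identification $N_+\simeq R$ via the basis $\{\zeta_p^i: 1\le i\le p-1\}$ is identical, and your anti-invariant generator $c=\zeta_p^{(p-1)/2}(1-\zeta_p)$ is exactly the element $v_0=\zeta_p^{(p-1)/2}-\zeta_p^{(p+1)/2}$ the paper uses to build its basis of $P$. The only difference is cosmetic (you phrase the second half as a twist by $\bm{Z}_-$ followed by multiplication by $c$, while the paper directly writes down the basis $v_i=\sigma^i(v_0)$), so the two arguments coincide.
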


\begin{proof}
By definition, when $n=p$, $N_+$ has a $\bm{Z}$-basis $\{u_i:1\le
i\le p-1\}$ with $\sigma: u_1\mapsto u_2\mapsto \cdots \mapsto
u_{p-1}\mapsto -(u_1+u_2+\cdots+u_{p-1})$,
$\tau:u_i\leftrightarrow u_{p-i}$. On the other hand,
$R=\sum_{0\le i\le p-1} \bm{Z}\cdot \zeta_p^i$ has a
$\bm{Z}$-basis $\{\zeta_p^i:1\le i\le p-1\}$ with
$\sigma:\zeta_p\mapsto \zeta_p^2 \mapsto\cdots\mapsto
\zeta_p^{p-1}\mapsto -(\zeta_p+\cdots+\zeta_p^{p-1})$,
$\tau:\zeta_p^i\leftrightarrow \zeta_p^{p-i}$. Hence the result.

For the proof of $N_-\simeq P$, note that
$P=R(1-\zeta_p)=\sum_{0\le i\le p-1}
\bm{Z}(\zeta_p^i-\zeta_p^{i+1})$. Define
$v_0=\zeta_p^{\frac{p-1}{2}} -\zeta_p^{\frac{p+1}{2}}$,
$v_i=\sigma^i(v_0)$ for $0\le i\le p-1$. Then
$\{v_1,v_2,\ldots,v_{p-1}\}$ is a $\bm{Z}$-basis of $P$ with
$\sigma: v_1\mapsto v_2\mapsto\cdots\mapsto v_{p-1}\mapsto
-(v_1+v_2+\cdots+v_{p-1})$ and $\tau: v_i\leftrightarrow -v_{p-i}$
because $\tau(\zeta_p)=\zeta_p^{-1}$. Thus $P\simeq N_-$.
\end{proof}


\begin{lemma} \label{l4.4}
Let $M_+$, $M_-$, $N_+$, $N_-$ be $G$-lattices with
$G=\langle\sigma,\tau:\sigma^n=\tau^2=1,
\tau\sigma\tau^{-1}=\sigma^{-1}\rangle\simeq D_n$ where $n$ is an
odd integer. Then there are non-split exact sequences of
$G$-lattices $0\to N_-\to M_+\to \bm{Z}\to 0$, $0\to N_+\to M_-\to
\bm{Z}_-\to 0$. When $n=p$ is an odd prime number, then $M_+\simeq
V$, $M_-\simeq X$.
\end{lemma}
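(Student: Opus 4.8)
The plan is to realise each of $M_+$ and $M_-$ as a non-split extension whose kernel is $N_-$ (resp.\ $N_+$) and whose quotient is $\bm{Z}$ (resp.\ $\bm{Z}_-$), and then, in the prime case, to match the middle term against Lee's list in Theorem \ref{t4.1}. First I would write down the two quotient maps. Let $w_i=\sigma^i u$ be the standard basis of $M_+$ and $w_i'=\sigma^i u'$ that of $M_-$ (Definition \ref{d3.1}). The augmentation $\varepsilon_+\colon M_+\to\bm{Z}$, $w_i\mapsto 1$, and its signed analogue $\varepsilon_-\colon M_-\to\bm{Z}_-$, $w_i'\mapsto u'$, are both $G$-equivariant and surjective, with kernels the sum-zero sublattices $J_+,J_-$. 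To exhibit isomorphisms $N_-\xrightarrow{\sim}J_+$ and $N_+\xrightarrow{\sim}J_-$ I would send the standard generators $z_1,\ldots,z_{n-1}$ of $N_\mp$ (on which $\sigma,\tau$ act by $A',\pm B'$) to the ``second differences'' $z_j\mapsto w_{j-1}-w_{j+1}$, resp.\ $w_{j-1}'-w_{j+1}'$. Verifying $G$-equivariance is direct: the shift relation gives $\sigma$-equivariance, the cyclic relation $\sigma z_{n-1}=-(z_1+\cdots+z_{n-1})$ matches a telescoping sum, and $\tau$-equivariance holds because the two sign conventions ($\tau z_j=\pm z_{n-j}$ together with $\tau w_j=w_{n-j}$ versus $\tau w_j'=-w_{n-j}'$) combine correctly. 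That the images form a $\bm{Z}$-basis of $J_\pm$ is exactly where $n$ odd enters: since $\gcd(2,n)=1$, the vectors $w_{j-1}-w_{j+1}$ trace a single $n$-cycle, so $n-1$ of them are independent and span the sum-zero lattice.

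Next I would prove non-splitting, which I expect to be the cleanest step. Since $M_+=\bm{Z}[G/\langle\tau\rangle]$ is a transitive permutation lattice, $(M_+)^G=\bm{Z}\cdot\nu$ with $\nu=\sum_i w_i$ and $\varepsilon_+(\nu)=n$; a splitting would furnish a $G$-fixed element with $\varepsilon_+$-value $1$, which is impossible for $n\ge 3$. The identical count for $M_-$, using $(M_-)^{\langle\sigma\rangle}=\bm{Z}\nu'$ with $\nu'=\sum_i w_i'$ and $\varepsilon_-(\nu')=n\,u'$, handles the second sequence. The very same observation shows there is no $G$-morphism $M_\pm\to\bm{Z}$ (resp.\ $\bm{Z}_-$) hitting a generator, so the companion norm sequences $0\to\bm{Z}\to M_+\to N_+\to 0$ and $0\to\bm{Z}_-\to M_-\to N_-\to 0$ do not split either; I will reuse this to get indecomposability below.

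For $n=p$ I would then invoke Lemma \ref{l4.3}, which identifies the kernels with the genuine ideals $N_+\simeq R$ and $N_-\simeq P$, both of \emph{trivial} ideal class. The sequences become non-split extensions $0\to P\to M_+\to\bm{Z}\to 0$ and $0\to R\to M_-\to\bm{Z}_-\to 0$. I expect the identification against Lee's list to be the main obstacle, so I would settle it in two stages. First, $M_\pm$ is indecomposable: rationally $\bm{Q}M_+\simeq\bm{Q}\oplus W$ and $\bm{Q}M_-\simeq(\bm{Z}_-\otimes\bm{Q})\oplus W$, where $W$ is the $(p-1)$-dimensional rational irreducible of $D_p$; since $W$ is irreducible, any proper decomposition would split off a rank-one trivial summand, necessarily $\bm{Z}\nu\subseteq(M_+)^G$, contradicting the non-splitting of the norm sequence established above (and symmetrically for $M_-$). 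Among the rank-$p$ indecomposables in Theorem \ref{t4.1} only $V_{\c{A}}$ and $X_{\c{A}}$ occur, and the rational type distinguishes them: trivial $+\,W$ forces $M_+\simeq V_{\c{A}}$, while sign $+\,W$ forces $M_-\simeq X_{\c{A}}$.

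It remains to pin the ideal class $\c{A}$, and this is the point I would treat most carefully. By the remark following Theorem \ref{t4.1}, the indecomposable middle term of a non-split extension $0\to P\c{A}\to \,\cdot\,\to\bm{Z}\to 0$ is $V_{\c{A}}$, and its embedded kernel is $P\c{A}$; comparing with the concrete kernel $N_-\simeq P$ of principal class shows $\c{A}$ is trivial, whence $M_+\simeq V$. The same comparison, with kernel $N_+\simeq R$ of principal class, gives $M_-\simeq X$. This finishes the proof, the only genuinely delicate ingredient being the correct bookkeeping of the $\tau$-signs in the kernel isomorphisms and the confirmation that indecomposability plus principal kernel isolate exactly the lattices $V$ and $X$ rather than their twists $V_{\c{A}},X_{\c{A}}$.
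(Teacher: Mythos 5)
Your proposal is correct and follows essentially the same route as the paper: exhibit the sum-zero sublattice of $M_\pm$ as a copy of $N_\mp$ via difference vectors (the paper uses the consecutive differences $x_{\frac{n-1}{2}+i}-x_{\frac{n+1}{2}+i}$ where you use $w_{j-1}-w_{j+1}$; both hinge on $n$ being odd), prove non-splitting, and then invoke Lee's uniqueness of the indecomposable lattice arising from a non-split extension of $\bm{Z}$ by $P$ (resp.\ of $\bm{Z}_-$ by $R$). The only real differences are cosmetic improvements on your side: your non-splitting argument (a $G$-fixed, resp.\ $\sigma$-fixed and $\tau$-anti-fixed, lift of the generator would have to lie in $\bm{Z}\cdot\sum_i w_i$, whose augmentation is $n\neq\pm1$) is cleaner than the paper's explicit coefficient equations, and your indecomposability check via the rational decomposition $\bm{Q}\oplus W$ makes the final matching with $V$ and $X$ self-contained where the paper leans directly on Lee's statement.
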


\begin{proof}
\begin{Case}{1} $M_+$. \end{Case}

By definition, choose a $\bm{Z}$-basis $\{x_i:0\le i\le n-1\}$ of
$M_+$ such that $\sigma: x_i\mapsto x_{i+1}$,
$\tau:x_i\leftrightarrow x_{n-i}$ where the index is understood
modulo $n$.

Define $u_0=x_{\frac{n-1}{2}}-x_{\frac{n+1}{2}}$,
$t=x_{\frac{n-1}{2}}$, $u_i=\sigma^i(u_0)$ for $0\le i\le n-1$.

It follows that $\sum_{0\le i\le n-1} u_i=0$ and
$\{u_1,u_2,\ldots,u_{n-1},t\}$ is a $\bm{Z}$-basis of $M_+$ with
$\sigma$ and $\tau$ acting by
\begin{align*}
\sigma :{}& u_1\mapsto u_2\mapsto \cdots \mapsto u_{n-1}\mapsto u_0=-(u_1+u_2+\cdots+u_{n-1}), \\
& t\mapsto t+u_1+u_2+\cdots+u_{n-1}, \\
\tau :{}& u_i\leftrightarrow-u_{n-i},~ t\mapsto
t+u_1+u_2+\cdots+u_{n-1}.
\end{align*}

Note that $\sum_{1\le i\le n-1} \bm{Z}\cdot u_i\simeq N_-$ and
$M_+/(\sum_{1\le i\le n-1} \bm{Z}\cdot u_i)\simeq \bm{Z}$. Hence
we get the sequence $0\to N_-\to M_+\to \bm{Z} \to 0$.

This sequence doesn't split. Otherwise, there is some element
$s\in M_+$ such that $\sigma(s)=\tau(s)=s$, and
$\{u_1,u_2,\ldots,u_{n-1},s\}$ is a $\bm{Z}$-basis of $M_+$.

Write $s=\sum_{1\le i\le n-1} a_i\cdot u_i+b\cdot t$ where $a_i,b
\in \bm{Z}$. Because $\{u_1,\ldots,u_{n-1},s\}$ is a
$\bm{Z}$-basis of $M_+$, we find that $b=\pm 1$.

Consider the case $b=-1$ (the situation $b=1$ can be discussed
similarly). Since $\tau(\sum_{1\le i\le n-1} a_i u_i-t)=\sum_{1\le
i\le n-1} a_iu_i-t$, we find that $a_i-1=a_{n-i}$ and
$a_{n-i}-1=a_i$ for all $1\le i\le n-1$. This is impossible.

Now assume that $n=p$ is an odd prime number. We will show that
$M_+\simeq V$.

By Lemma \ref{l4.3}, $N_-\simeq P$. Thus we have a non-split
extension $0\to P\to M_+\to \bm{Z}\to 0$. Then apply the remark
after Theorem \ref{t4.1}. More precisely, it is proved in
\cite[page 221]{Le} that, up to $G$-lattice isomorphisms, there is
precisely one indecomposable $G$-lattice arising from extensions
of $\bm{Z}$ by $P$, although $\fn{Ext}_{\bm{Z}[G]}^1
(\bm{Z},P)=\bm{Z}/p\bm{Z}$ by \cite[Lemma 2.1]{Le}. Since $0\to
P\to V\to \bm{Z}\to 0$ is a non-split extension by Theorem
\ref{t4.1}, we conclude that $M_+\simeq V$.

\begin{Case}{2} $M_-$. \end{Case}

The proof is similar to Case 1. Choose a $\bm{Z}$-basis $\{x_i:
0\le i\le n-1\}$ of $M_-$ with $\sigma: x_i\mapsto x_{i+1}$,
$\tau: x_i\mapsto -x_{n-i}$.

Define $u_0=x_{\frac{n-1}{2}}-x_{\frac{n+1}{2}}$,
$t=x_{\frac{n-1}{2}}$, $u_i=\sigma^i (u_0)$ for $0\le i\le n-1$.
We find that
\begin{align*}
\sigma :{}& u_1\mapsto u_2\mapsto \cdots \mapsto u_{n-1}\mapsto -(u_1+u_2+\cdots+u_{n-1}), \\
& t\mapsto t+u_1+u_2+\cdots+u_{n-1}, \\
\tau :{}& u_i\leftrightarrow u_{n-i},~ t\mapsto
-t-u_1-u_2-\cdots-u_{n-1}.
\end{align*}

Thus $\sum_{0\le i\le n-1} \bm{Z}\cdot u_i \simeq N_+$ and
$M_-/(\sum_{1\le i\le n-1} \bm{Z}u_i)\simeq \bm{Z}_-$.

Similarly, the sequence $0\to N_+\to M_-\to \bm{Z}_-\to 0$ doesn't
split.

The proof of $M_-\simeq X$ when $n=p$ is a prime number is the
same.
\end{proof}

\begin{lemma} \label{l4.5}
Let $N_+$, $N_-$, $\widetilde{M}_+$, $\widetilde{M}_-$ be
$G$-lattices with
$G=\langle\sigma,\tau:\sigma^n=\tau^2=1,\tau\sigma\tau^{-1}=\sigma^{-1}\rangle
\simeq D_n$ where $n$ is an odd integer. Then there are non-split
exact sequences of $G$-lattices $0\to N_+\to \widetilde{M}_- \to
\bm{Z}[G/\langle\sigma\rangle]\to 0$, $0\to N_-\to
\widetilde{M}_+\to \bm{Z}[G/\langle\sigma\rangle]\to 0$. When
$n=p$ is an odd prime number, then $\widetilde{M}_-\simeq Y_0$,
$\widetilde{M}_+\simeq Y_1$.
\end{lemma}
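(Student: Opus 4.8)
The plan is to realise each $\widetilde{M}_\pm$ as an extension of $\bm{Z}[G/\langle\sigma\rangle]$ by the rank $n-1$ lattice $N_\mp$, reusing the sublattices already produced in Lemma~\ref{l4.4}, and then to read off the prime case $n=p$ from Lemma~\ref{l4.3} together with Lee's classification. Concretely, by Definitions~\ref{d3.1} and \ref{d3.3}, $\widetilde{M}_+=M_+\oplus\bm{Z}\cdot w$ as an abelian group, with $\sigma$ cyclically shifting the basis $w_0,\dots,w_{n-1}$ of $M_+$ and fixing $w$, while $\tau\colon w_i\mapsto w_{-i}$ and $\tau w=\big(\sum_i w_i\big)-w$; similarly $\widetilde{M}_-=M_-\oplus\bm{Z}\cdot w'$ with $\tau\colon w_i\mapsto -w_{-i}$ and $\tau w'=w'-\sum_i w_i$. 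The proof of Lemma~\ref{l4.4} exhibits $N_-$ (resp.\ $N_+$) inside $M_+$ (resp.\ $M_-$) as the sublattice spanned by the consecutive differences $w_i-w_{i+1}$, i.e.\ as the augmentation sublattice $\Delta:=\{\sum_i a_iw_i:\sum_i a_i=0\}$. Composing with $M_\pm\hookrightarrow\widetilde{M}_\pm$ gives the inclusions $N_-\hookrightarrow\widetilde{M}_+$ and $N_+\hookrightarrow\widetilde{M}_-$, and it remains to identify the quotients and prove non-splitting.

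Next I would identify $\widetilde{M}_+/N_-$. It has rank $2$; writing $t$ for the common image of the $w_i$ (they all agree modulo $\Delta$) and $\bar w$ for the image of $w$, one finds $\sigma$ acts trivially while $\tau t=t$ and $\tau\bar w=nt-\bar w$ (since $\sum_i w_i\equiv nt$). Thus $G$ acts through $G/\langle\sigma\rangle\simeq C_2$, and the point is that $\tau\mapsto\left(\begin{smallmatrix}1&n\\0&-1\end{smallmatrix}\right)$ is \emph{not} diagonalisable over $\bm{Z}$: its $(-1)$-eigenvector is $-nt+2\bar w$, which together with $t$ spans a sublattice of index $2$ precisely because $n$ is odd. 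Hence $\widetilde{M}_+/N_-$ is an indecomposable rank $2$ lattice for $C_2$, so $\widetilde{M}_+/N_-\simeq\bm{Z}[G/\langle\sigma\rangle]$; the analogous computation with $\tau\mapsto\left(\begin{smallmatrix}-1&-n\\0&1\end{smallmatrix}\right)$ gives $\widetilde{M}_-/N_+\simeq\bm{Z}[G/\langle\sigma\rangle]$. This yields the two short exact sequences.

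For non-splitting I would use the $\sigma$-fixed sublattice. Because $\sigma$ permutes the $w_i$ cyclically and fixes $w$, the fixed sublattice is $\widetilde{M}_+^{\sigma}=\bm{Z}\cdot N\oplus\bm{Z}\cdot w$ of rank $2$, where $N=\sum_i w_i$, while $N_-^{\sigma}=0$ (the only $\sigma$-fixed vectors of $M_+$ are multiples of $N$, and $N\notin\Delta$). If the sequence split, the image $L$ of a $G$-section would satisfy $L\subseteq\widetilde{M}_+^{\sigma}$ (as $\sigma$ acts trivially on $\bm{Z}[G/\langle\sigma\rangle]$), would have rank $2$, and $\widetilde{M}_+/L\simeq N_-$ being torsion-free would force $L=\widetilde{M}_+^{\sigma}$, i.e.\ $\widetilde{M}_+=N_-\oplus\widetilde{M}_+^{\sigma}$. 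But $N_-+\widetilde{M}_+^{\sigma}=\{\sum_i a_iw_i:\sum_i a_i\equiv 0\ (\mathrm{mod}\ n)\}\oplus\bm{Z}\cdot w$ has index $[M_+:\Delta+\bm{Z}N]=n>1$ in $\widetilde{M}_+$, a contradiction. The identical index count settles $\widetilde{M}_-$.

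Finally, for $n=p$ prime, Lemma~\ref{l4.3} gives $N_-\simeq P$ and $N_+\simeq R$, so the two sequences become non-split extensions $0\to P\to\widetilde{M}_+\to\bm{Z}[H]\to 0$ and $0\to R\to\widetilde{M}_-\to\bm{Z}[H]\to 0$ (using $\bm{Z}[G/\langle\sigma\rangle]\simeq\bm{Z}[H]$). By Lee's theorem in the form of Theorem~\ref{t4.1} and the remark following it, up to isomorphism there is a unique indecomposable lattice arising from a non-split extension of $\bm{Z}[H]$ by $P$ (resp.\ by $R$), namely $Y_1$ (resp.\ $Y_0$); comparing with the non-split defining extensions of $Y_1$ and $Y_0$ yields $\widetilde{M}_+\simeq Y_1$ and $\widetilde{M}_-\simeq Y_0$, exactly as in the identification $M_+\simeq V$ carried out in Lemma~\ref{l4.4}. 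I expect the main obstacle to be the quotient identification: ruling out the split alternative $\bm{Z}\oplus\bm{Z}_-$ and pinning the quotient to $\bm{Z}[G/\langle\sigma\rangle]$ rests entirely on the parity of $n$ through the index-$2$ eigenvector computation, with the non-splitting index count as the second essential ingredient; the passage to $Y_0,Y_1$ is then a soft appeal to Lemma~\ref{l4.3} and Lee's uniqueness.
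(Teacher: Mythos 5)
Your proof is correct, and while it follows the same skeleton as the paper's (embed $N_\mp$ into $\widetilde{M}_\pm$ as the augmentation sublattice of $M_\pm$, identify the rank-two quotient, rule out splitting, then invoke Lee's uniqueness of the indecomposable non-split extension to get $Y_0,Y_1$), you execute the two nontrivial steps by genuinely different and more structural arguments. For the quotient, the paper produces an explicit new basis $w_0=-\tfrac{n-1}{2}t+w$, $w_1=\tfrac{n+1}{2}t-w$ whose images are visibly permuted by $\tau$, whereas you observe that the induced $\tau$-matrix $\left(\begin{smallmatrix}1&n\\0&-1\end{smallmatrix}\right)$ has eigenlattice sum of index $2$ when $n$ is odd, hence is non-diagonalisable over $\bm{Z}$, and conclude by the classification of rank-two $C_2$-lattices; your version isolates exactly where the parity of $n$ enters, at the cost of quoting that classification. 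For non-splitting, the paper writes a putative $\sigma$- and $\tau$-invariant section in coordinates, reduces to $b_0^2-b_1^2=\pm1$, and derives the contradiction $na_{n-1}=-(n-1)^2/2$ from a recursion; your argument that a section must land in $\widetilde{M}_+^{\sigma}=\bm{Z}\cdot(\sum_i w_i)\oplus\bm{Z}\cdot w$, must equal it by torsion-freeness of $N_-$, and that $N_-+\widetilde{M}_+^{\sigma}$ has index $n$ in $\widetilde{M}_+$ is shorter, coordinate-free, and makes the obstruction (the index $n$) transparent. Both treatments of the prime case coincide. The one point to state explicitly if you write this up is the classification fact you lean on, namely that every $\bm{Z}[C_2]$-lattice decomposes uniquely into copies of $\bm{Z}$, $\bm{Z}_-$ and $\bm{Z}[C_2]$, so that a rank-two lattice with $\tau$ of determinant $-1$ and non-split eigenlattice decomposition must be $\bm{Z}[C_2]$.
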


\begin{proof}
\begin{Case}{1} $\widetilde{M}_+$. \end{Case}

We adopt the same notations $x_0,x_1,\ldots,x_{n-1},u_1,\ldots,u_{n-1}$ in the proof of Lemma \ref{l4.4}.
Write $\widetilde{M}_+=(\bigoplus_{0\le i\le n-1} \bm{Z}\cdot x_i)\oplus \bm{Z}\cdot w$ with
\begin{align*}
\sigma &: x_i\mapsto x_{i+1}, ~ w\mapsto w, \\
\tau &: x_i \mapsto x_{n-i}, ~ w\mapsto -w+x_0+x_1+\cdots+x_{n-1}
\end{align*}
where the index is understood modulo $n$.

Define $u_0=x_{\frac{n-1}{2}} - x_{\frac{n+1}{2}}$, $t=x_{\frac{n-1}{2}}$, $u_i=\sigma^i (u_0)$ for $0\le i\le n-1$.

We claim that $\sum_{0\le i\le n-1} x_i=nt-(n-1)u_0-(n-2)u_1-\cdots-u_{n-2}$.

Since $x_{\frac{n-1}{2}}=t$, $x_{\frac{n+1}{2}}=t-u_0$, we find
that $x_{\frac{n+3}{2}}=x_{\frac{n+1}{2}}-u_1=t-u_0-u_1$. By
induction, we may find similar formulae for $x_{\frac{n+5}{2}},
\ldots,x_{n-1},x_0,\ldots,x_{\frac{n-3}{2}}$. In particular,
$x_{\frac{n-3}{2}}=t-u_0-u_1-\cdots -u_{n-2}$. Thus the formula of
$\sum_{0\le i\le n-1} x_i$ is found.

Note that $\{u_1,\ldots,u_{n-1},t,w\}$ is a $\bm{Z}$-basis of $\widetilde{M}_+$ and
\begin{align*}
\sigma &: u_1\mapsto u_2\mapsto \cdots \mapsto u_{n-1}\mapsto -(u_1+u_2+\cdots+u_{n-1}), ~ t\mapsto t+\sum_{1\le i\le n-1} u_i,~ w\mapsto w, \\
\tau &: u_i\mapsto -u_{n-i},~ t\mapsto t+\sum_{1\le i\le n-1} u_i,~ w\mapsto -w+u_1+2u_2+\cdots+(n-1)u_{n-1}+nt.
\end{align*}

Define $w_0=-\frac{n-1}{2}t+w$, $w_1=\frac{n+1}{2}t-w$.
Then $\{u_1,\ldots,u_{n-1},w_0,w_1\}$ is also a $\bm{Z}$-basis of $\widetilde{M}_+$ with
\begin{align*}
\sigma:{}& w_0\mapsto w_0-\frac{n-1}{2}\!\sum_{1\le i\le n-1}\! u_i,~w_1\mapsto w_1+\frac{n+1}{2}\!\sum_{1\le i\le n-1}\! u_i, \\
\tau:{}& w_0\mapsto w_1-\tfrac{n-3}{2}u_1-\tfrac{n-5}{2}u_2-\cdots-u_{\frac{n-3}{2}}+u_{\frac{n+1}{2}}+2u_{\frac{n+3}{2}}+\cdots+\tfrac{n-1}{2}u_{n-1}, \\
& w_1\mapsto w_0+\tfrac{n-1}{2}u_1+\tfrac{n-3}{2}u_2+\cdots+u_{\frac{n-1}{2}}-u_{\frac{n+3}{2}}-2u_{\frac{n+5}{2}}-\cdots-\tfrac{n-3}{2}u_{n-1}.
\end{align*}

Note that $\sum_{1\le i\le n-1}\bm{Z}\cdot u_i \simeq N_-$ and $\widetilde{M}_+/(\sum_{1\le i\le n-1} \bm{Z}\cdot u_i)\simeq \bm{Z}[G/\langle\sigma\rangle]$.
It follows that we get an exact sequence $0\to N_-\to \widetilde{M}_+\to \bm{Z}[G/\langle\sigma\rangle]\to 0$.

\medskip
We will show that this exact sequence doesn't split.

Suppose not.
Then there exists $s\in \widetilde{M}_+$ such that $\sigma(s)=s$ and $\{u_1,\ldots,u_{n-1},s,\tau(s)\}$ is a $\bm{Z}$-basis of $\widetilde{M}_+$.

Write $s=\sum_{1\le i\le n-1} a_iu_i+b_0w_0+b_1w_1$ where $a_i,b_j\in \bm{Z}$.
Since $\tau(s)=\sum_{1\le i\le n-1}a'_i u_i+b_1w_0+b_0w_1$ for some integers $a'_i\in \bm{Z}$,
it follows that $b_0^2-b_1^2=\pm 1$ (remember that $\{u_1,\ldots,u_{n-1},s,\tau(s)\}$ is a $\bm{Z}$-basis of $\widetilde{M}_+$).
It follows that the only solutions for the pair $(b_0,b_1)$ are $(b_0,b_1)=(\pm 1,0)$, $(0,\pm 1)$.

We consider the situation $(b_0,b_1)=(1,0)$ (the other situations
may be discussed similarly). Write $s=\sum_{1\le i\le
n-1}a_iu_i+w_0$ as before. Since $\sigma(s)=s$, we find an
identity of the ordered $(n-1)$-tuples :
$(a_1,a_2,\ldots,a_{n-1})=(0,a_1,a_2,\ldots,a_{n-2})-a_{n-1}(1,1,\ldots,1)-\frac{n-1}{2}(1,1,\ldots,1)$.
Solve $a_1,a_2,\ldots,a_{n-1}$ inductively in terms of $a_{n-1}$.
We find $a_1=-(a_{n-1}+\frac{n-1}{2})$,
$a_2=-2(a_{n-1}+\frac{n-1}{2})$, $\ldots$,
$a_{n-1}=-(n-1)(a_{n-1}+\frac{n-1}{2})$. Hence
$na_{n-1}=-(n-1)^2/2$. But $\fn{gcd}\{n,n-1\}=1$. Thus we find a
contradiction.

In conclusion, $0\to N_-\to \widetilde{M}_+\to \bm{Z}[G/\langle\sigma\rangle]\to 0$ doesn't split.

When $n=p$ is an odd prime number, we get a non-split extension
$0\to P\to \widetilde{M}_+\to \bm{Z}[G/\langle\sigma\rangle]\to
0$. It is proved by Lee (see the last paragraph of \cite[page
221]{Le}) that the non-split extensions of
$\bm{Z}[G/\langle\sigma\rangle]$ by $P$ give rise to precisely one
indecomposable $G$-lattice, although $\fn{Ext}^1_{\bm{Z}[G]}
(\bm{Z}[G/\langle\sigma\rangle],P)\simeq \bm{Z}/p\bm{Z}$ by
\cite[Lemma 2.1]{Le}. Since $0\to P\to Y_1\to
\bm{Z}[G/\langle\sigma\rangle]\to 0$ is also a non-split
extension, we conclude that $\widetilde{M}_+\simeq Y_1$.

\medskip
\begin{Case}{2} $\widetilde{M}_-$. \end{Case}

The proof is similar.
We adopt the notations $x_0,x_1,\ldots,x_{n-1},u_1,\ldots,u_{n-1}$ in the proof of Case 1.
Write $\widetilde{M}_- =(\bigoplus_{0\le i\le n-1} \bm{Z}\cdot x_i)\oplus \bm{Z}\cdot w$ such that
\begin{align*}
\sigma &: x_i\mapsto x_{i+1},~ w\mapsto w, \\
\tau &: x_i\mapsto -x_{n-i},~ w\mapsto w-\sum_{0\le i\le n-1} x_i
\end{align*}
where $0\le i\le n-1$ and the index is understood modulo $n$.

Define $u_0=x_{\frac{n-1}{2}}-x_{\frac{n+1}{2}}$, $t=x_{\frac{n-1}{2}}$, $u_i=\sigma^i(u_0)$ for $0\le i\le n-1$.
Then $\{u_1,\ldots,u_{n-1},t,w\}$ is a $\bm{Z}$-basis of $\widetilde{M}_-$ and
\begin{align*}
\sigma:{}& u_1\mapsto u_2\mapsto\cdots\mapsto u_{n-1}\mapsto -(u_1{+}\cdots{+}u_{n-1}),\\
& t\mapsto t+u_1+u_2+\cdots+u_{n-1},\, w\mapsto w, \\
\tau:{}& u_i\mapsto u_{n-i},~ t\mapsto -t-\left(\sum_{1\le i\le n-1} u_i\right),\\
& w\mapsto w-u_1-2u_2-\cdots-(n-1)u_{n-1}-nt.
\end{align*}

Define $w_0=\frac{n-1}{2}t-w$, $w_1=\frac{n+1}{2}-w$.
We find that
\begin{align*}
\sigma:{}& w_0\mapsto w_0+\frac{n-1}{2}\sum_{1\le i\le n-1} u_i,~ w_1\mapsto w_1+\frac{n+1}{2}\sum_{1\le i\le n-1}u_i, \\
\tau:{}& w_0\mapsto w_1-\tfrac{n-3}{2}u_1-\tfrac{n-5}{2}u_2-\cdots-u_{\frac{n-3}{2}}+u_{\frac{n+1}{2}}+2u_{\frac{n+3}{2}}+\cdots+\tfrac{n-1}{2}u_{n-1}, \\
& w_1\mapsto w_0-\tfrac{n-1}{2}u_1-\tfrac{n-3}{2}u_2-\cdots-u_{\frac{n-1}{2}}+u_{\frac{n+3}{2}}+2u_{\frac{n+5}{2}}+\cdots+\tfrac{n-3}{2}u_{n-1}.
\end{align*}

The remaining proof is similar and is omitted.
\end{proof}

\begin{lemma} \label{l4.6}
Let $N_+$, $N_-$, be $G$-lattices with
$G=\langle\sigma,\tau:\sigma^n=\tau^2=1,\tau\sigma\tau^{-1}=\sigma^{-1}\rangle
\simeq D_n$ where $n$ is an odd integer. Then there are non-split
exact sequences of $G$-lattices $0\to N_+ \oplus N_- \to
\bm{Z}[G]\to \bm{Z}[G/\langle\tau\rangle] \to 0$. When $n=p$ is an
odd prime number, then $Y_2\simeq \bm{Z}[G]$.
\end{lemma}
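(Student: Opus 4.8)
The plan is to realise the stated sequence as the kernel--cokernel sequence of the canonical projection of the regular lattice, and then to exploit the indecomposability of $\bm{Z}[G]$. First I would fix the basis of the regular lattice as in Theorems~\ref{t3.5}--\ref{t3.6}: write $u_i=\sigma^i$, $v_j=\sigma^j\tau$, so that $\sigma:u_i\mapsto u_{i+1},\ v_j\mapsto v_{j+1}$ and $\tau:u_i\leftrightarrow v_{n-i}$ (indices mod $n$). Since $\langle\sigma\rangle\triangleleft G$, the canonical map $\pi:\bm{Z}[G]\to\bm{Z}[G/\langle\sigma\rangle]$, $g\mapsto g\langle\sigma\rangle$, is a $G$-equivariant surjection onto the rank-two permutation lattice $\bm{Z}[G/\langle\sigma\rangle]\simeq\bm{Z}[H]$; a rank count $2n=(2n-2)+2$ shows this rank-two lattice is exactly the cokernel occurring in the statement. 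Thus I study $0\to\ker\pi\to\bm{Z}[G]\xrightarrow{\pi}\bm{Z}[H]\to 0$.

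The decisive step is to identify $\ker\pi$ with $N_+\oplus N_-$. Because $\bm{Z}[G]=\fn{Ind}_{\langle\sigma\rangle}^{G}\bm{Z}[\langle\sigma\rangle]$ and $\pi$ is the functor $\fn{Ind}_{\langle\sigma\rangle}^{G}$ applied to the augmentation $\bm{Z}[\langle\sigma\rangle]\to\bm{Z}$, exactness of induction gives $\ker\pi\cong\fn{Ind}_{\langle\sigma\rangle}^{G} I$, where $I$ is the augmentation ideal and $I\cong\bm{Z}[\sigma]/(f(\sigma))$ is precisely the restriction of $N_+$ (and of $N_-$) to $\langle\sigma\rangle$ via the matrix $A'$. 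As $f(\sigma)$ is central, $\fn{Ind}_{\langle\sigma\rangle}^{G} I\cong\bm{Z}[G]/\langle f(\sigma)\rangle\cong\bm{Z}[\zeta_n]\circ H$ of Definition~\ref{d3.2}. What remains is the claim $\bm{Z}[\zeta_n]\circ H\cong N_+\oplus N_-$, which I would verify by the paper's own device: write down explicit preimages in $\ker\pi$ of the $N_+$- and $N_-$-bases whose $\sigma,\tau$-actions reproduce $A',\pm B'$, adjoin a lift of the quotient basis, and check that the resulting $(2n)\times(2n)$ coefficient matrix has determinant $\pm1$, exactly as in Theorems~\ref{t3.4}--\ref{t3.6}.

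To see the extension is non-split, I would use that $\bm{Z}[G]$ is an indecomposable $G$-lattice. Indeed $\fn{End}_{\bm{Z}[G]}(\bm{Z}[G])\cong\bm{Z}[G]^{\mathrm{op}}$, and the integral group ring of a finite group has no idempotents besides $0,1$: if $e=\sum_g a_g g$ is idempotent with coefficient $a_1$ at the identity, then $|G|\,a_1=\fn{tr}_{\mathrm{reg}}(e)=\fn{rank}(e)\in\{0,\dots,|G|\}$, forcing $a_1\in\{0,1\}$ and $e\in\{0,1\}$. A split sequence would exhibit $\bm{Z}[G]\cong N_+\oplus N_-\oplus\bm{Z}[H]$, a nontrivial decomposition, a contradiction. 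When $n=p$ is an odd prime, Lemma~\ref{l4.3} gives $N_+\cong R$ and $N_-\cong P$ (consistently, $\bm{Z}[\zeta_p]\circ H$ is then the hereditary order whose two indecomposable projective lattices are $R$ and $P$, so $\ker\pi\cong R\oplus P$), so the sequence reads $0\to R\oplus P\to\bm{Z}[G]\to\bm{Z}[H]\to 0$, a non-split extension with indecomposable middle term. By the remark following Theorem~\ref{t4.1}, up to isomorphism there is a unique indecomposable $G$-lattice arising as a non-split extension of $\bm{Z}[H]$ by $R\oplus P$, namely $Y_2$; therefore $\bm{Z}[G]\cong Y_2$.

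The main obstacle is the identification $\ker\pi\cong N_+\oplus N_-$, equivalently $\bm{Z}[\zeta_n]\circ H\cong N_+\oplus N_-$: this is exactly the point where the Krull--Schmidt--Azumaya theorem fails, so the basis and determinant bookkeeping must be carried out by hand, and the naive attempt to split the induced module via the projection formula does not work because $\bm{Z}[G/\langle\sigma\rangle]$ is itself indecomposable. Conceptually, for $n=p$ the decomposition is the splitting of the hereditary order $\bm{Z}[\zeta_p]\circ H$ into its two projective indecomposables $R$ and $P$; once it is in hand, the indecomposability of $\bm{Z}[G]$ and the appeal to Lee's uniqueness are routine.
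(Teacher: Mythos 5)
Your architecture coincides with the paper's own proof: the paper likewise studies the projection $\varphi:\bm{Z}[G]\to\bm{Z}[H]$, $\sigma^i\mapsto t_0$, $\sigma^i\tau\mapsto t_1$ (so you were right to read the cokernel in the statement as the rank-two lattice $\bm{Z}[G/\langle\sigma\rangle]\simeq\bm{Z}[H]$ of Theorem \ref{t4.1} --- the statement's $\bm{Z}[G/\langle\tau\rangle]$ is a typo, as your rank count $2n=(2n-2)+2$ shows); it identifies $\ker\varphi\simeq N_+\oplus N_-$ by an explicit basis-and-determinant computation; it derives non-splitness from the indecomposability of $\bm{Z}[G]$ (citing Swan, where you supply the standard trace/idempotent argument, which is correct); and it concludes $\bm{Z}[G]\simeq Y_2$ from Lemma \ref{l4.3} and Lee's uniqueness of the indecomposable non-split extension of $\bm{Z}[H]$ by $R\oplus P$. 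Your front-end reduction, $\ker\varphi\simeq\fn{Ind}_{\langle\sigma\rangle}^G I\simeq\bm{Z}[G]/\langle f(\sigma)\rangle$ by exactness of induction, is a clean structural framing the paper does not make explicit, and it is correct.

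The genuine gap is that you stop exactly at the lemma's content: ``write down explicit preimages \dots and check that the coefficient matrix has determinant $\pm 1$'' is a promissory note, and producing the elements is the one non-formal step. The obvious candidates fail: with the paper's generators $u_0=\sigma^{(n-1)/2}-\sigma^{(n+1)/2}$, $v_0=\sigma^{(n+1)/2}\tau-\sigma^{(n-1)/2}\tau$, $u_i=\sigma^i(u_0)$, $v_i=\sigma^i(v_0)$, the symmetric/antisymmetric choice $u_i+v_i$, $u_i-v_i$ has the required $\sigma,\tau$-actions (matching $A',B'$ and $A',-B'$) but spans a sublattice of index $2^{n-1}$, since $(u_i+v_i)+(u_i-v_i)=2u_i$. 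The paper's proof succeeds only because of the index shift $y_i=u_{i-1}-v_{i+1}$ paired with $x_i=u_i+v_i$, a choice whose integral fullness exploits the oddness of $n$ and is verified by the multi-stage determinant reduction of Steps 3--6 (the matrices $P\to P_0\to P_1\to P_2$ and the size-induction). Your parenthetical ``consistency'' remark for $n=p$ does not fill this gap either: hereditariness of $\bm{Z}[\zeta_p]\circ H$ gives that $\ker\varphi$, being the regular module, splits into two projective lattices of rank $p-1$, but a priori these could be $R\c{A}\oplus P\c{B}$ with nonprincipal classes; that both classes are trivial is essentially equivalent to the assertion being proved, and, as you yourself note, the failure of Krull--Schmidt--Azumaya blocks any soft argument. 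So: right plan, the same as the paper's, but the decisive construction is missing.
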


\begin{proof}
This lemma was proved by Lee for the case when $n=p$ is an odd
prime number in (i) of Case 1 of \cite[pages 222--224]{Le}. There
was also a remark in the first paragraph of \cite[page 229,
Section 4]{Le}.

Here is a proof when $n$ is an odd integer. Once the first part is
proved, we may deduce the second part when $n=p$ is an odd prime
number because $N_+ \simeq R$, $N_- \simeq P$ (by Lemma
\ref{l4.3}) and there is a unique indecomposable $G$-lattice
arising from non-split extensions of
$\bm{Z}[G/\langle\tau\rangle]$ by $R \oplus P$ (see \cite[page
222]{Le}). Hence $\bm{Z}[G] \simeq Y_2$.

Now we start to prove the first part with
$G=\langle\sigma,\tau:\sigma^n=\tau^2=1,\tau\sigma\tau^{-1}=\sigma^{-1}\rangle
\simeq D_n$ where $n$ is an odd integer.

{}From now on till the end of the proof, denote $\zeta=\zeta_n$ a
primitive $n$-th root of unity, $R=\bm{Z}[\zeta]$,
$R_0=\bm{Z}[\zeta+\zeta^{-1}]$, $H=\langle\tau\rangle$.

\bigskip
Step 1. Let $\{\sigma^i,\sigma^i \tau: 0\le i\le n-1\}$ be a
$\bm{Z}$-basis of $\bm{Z}[G]$.

Let $\{t_0,t_1\}$ be a $\bm{Z}$-basis of $\bm{Z}[H]$ with
$\sigma(t_i)=t_i$, $\tau:t_0\leftrightarrow t_1$.

Define a $G$-lattice surjection $\varphi:\bm{Z}[G]\to \bm{Z}[H]$
by $\varphi(\sigma^i)=t_0$, $\varphi(\sigma^i \tau)=t_1$. Define a
$G$-lattice $M$ by $M=\fn{Ker}(\varphi)$. We will prove that $M
\simeq N_+ \oplus N_-$ (note that $\bm{Z}[G]$ is indecomposable
\cite{Sw4}).

Define $u_i, v_i \in M$ as follows. Define $u_0=
\sigma^{(n-1)/2}-\sigma^{(n+1)/2}$, $v_0=\sigma^{(n+1)/2}
\tau-\sigma^{(n-1)/2} \tau$, and $u_i= \sigma^i(u_0)$, $v_i=
\sigma^i(v_0)$ for $0 \le i \le n-1$.

It follows that $\sum_{0 \le i \le n-1} u_i = \sum_{0 \le i \le
n-1} v_i=0$, and $\{u_i,v_i : 1\le i\le n-1\}$ is a $\bm{Z}$-basis
of $M$. Moreover, it is easy to see that $\sigma: u_i \mapsto
u_{i+1},v_i \mapsto v_{i+1}$, $\tau: u_i \mapsto v_{n-i},v_i
\mapsto u_{n-i}$ where the index is understood modulo $n$.

\bigskip
Step 2

Define $x_i=u_i+v_i$, $y_i=u_{i-1}-v_{i+1}$ where $0\le i \le
n-1$. Clearly $\sum_{0 \le i \le n-1}x_i=\sum_{0 \le i \le
n-1}y_i=0$. We claim that $\{x_i, y_i: 1 \le i \le n-1 \}$ is a
$\bm{Z}$-basis of $M$.

Assume the above claim. Define $M_1=\oplus_{1 \le i \le n-1}
\bm{Z} \cdot x_i$, $M_2=\oplus_{1 \le i \le n-1} \bm{Z} \cdot
y_i$. It is easy to verify that $M_1 \simeq N_+$ and $M_2 \simeq
N_-$. Hence the proof that $M \simeq N_+ \oplus N_-$ is finished.

\bigskip
Step 3

We will prove that $\{x_i, y_i: 1 \le i \le n-1 \}$ is a
$\bm{Z}$-basis of $M$.

Let $Q$ be the coefficient matrix of $x_1, x_2,
\ldots,x_{n-1},y_1, \ldots,y_{n-1}$ with respect to the
$\bm{Z}$-basis $u_1, u_2, \ldots,u_{n-1},v_1, \ldots,v_{n-1}$. For
the sake of visual convenience, we will consider the matrix $P$
which is the transpose of $Q$. We will show that det$(P)= 1$.


The matrix $P$ is defined as
\[
P=\left(\begin{array}{@{}cccc;{3pt/2pt}ccccc@{}}
1 &  &  &  & 1 &  &  & \\
 & \ddots &  &  &  &  \ddots & & \\
 &  & \ddots &  &  &  & \ddots & \\
 &  &  & 1 &  &  & & 1 \\\hdashline[3pt/2pt]
-1 & \cdots & -1 & -1 & 0 & -1 & & \\
1 &  &  & 0 & \vdots &  & \ddots & \\
 &  \ddots &  & \vdots & 0 &  &  & -1 \\
 &   & 1 &  0 & 1 & 1 &  \cdots & 1
\end{array}\right).
\]

For examples, when $n=3, 5$, it is of the form

\begin{align*}
P&=\left(\begin{array}{@{}cc;{3pt/2pt}ccc@{}}
1 & 0 & 1 & 0\\
0 & 1 & 0 & 1\\\hdashline[3pt/2pt]
-1 & -1 & 0 & -1\\
1 &  0 & 1 & 1
\end{array}\right),\\
P&=\left(\begin{array}{@{}cccc;{3pt/2pt}ccccc@{}}
1 & 0 & 0 & 0 & 1 & 0 & 0 & 0\\
0 & 1 & 0 & 0 & 0 & 1 & 0 & 0\\
0 & 0 & 1 & 0 & 0 & 0 & 1 & 0\\
0 & 0 & 0 & 1 & 0 & 0 & 0 & 1 \\\hdashline[3pt/2pt]
-1 & -1 & -1 & -1 & 0 & -1 & 0 & 0\\
1 & 0 & 0 & 0 & 0 & 0 & -1 & 0\\
0 & 1 & 0 & 0 & 0 & 0 & 0 & -1 \\
0 & 0 & 1 &  0 & 1 & 1 &  1 & 1
\end{array}\right).
\end{align*}

In the case $n=3,5$, it is routine to show that det$(P)= 1$. When
$n \ge 7$, we will apply column operations on the matrix $P$ and
then expand the determinant along a row. Thus we are reduced to
matrices of smaller size.

For a given matrix, we denote by $(Ci)$ 
its $i$-th column.
When we say that, apply $(Ci)+(C1)$
on the $i$-th column, we mean the column operation by adding the
1-st column to the $i$-th column.

\bigskip
Step 4

We will prove det$(P)= 1$ where $P$ is the $(2n-2)\times(2n-2)$
integral matrix defined in Step 3. Suppose $n \ge 7$.

Apply column operations on the matrix $P$. On the $(n+i)$-th
column where $0 \le i \le n-2$, apply $C(n+i)-C(i+1)$.

Thus all the entries of the right upper part of the resulting
matrix vanish. We get det$(P)=$ det$(P_0)$ where $P_0$ is an
$(n-1)\times(n-1)$ integral matrix defined as

\begin{align*}
P_0&=\left(\begin{array}{@{}cccccccc@{}}
1 & 0 & 1 & 1 & 1 & 1 & \cdots & 1\\
-1 & 0 & -1 & & & & &  \\
& -1 & 0 & -1 & & & &  \\
& & -1 & 0 & -1 & & &  \\
& & & -1 & 0 & -1 & &  \\
& & &  & \ddots & \ddots & \ddots & \\
& & & &  & -1 & 0 & -1 \\
1 & 1 & 1 & \cdots& 1 & 1 & 0 & 1
\end{array}\right).
\end{align*}

\bigskip
Step 5

Apply column operations on $P_0$. On the 3rd column, apply
$(C3)-(C1)$. Then, on the 4th column, apply $(C4)-(C2)$.

In the resulting matrix, each of the 2nd row and the 3rd row have
only one non-zero entry.

Thus det$(P_0) = $ det$(P_1)$ where $P_1$ is an $(n-3)\times(n-3)$
integral matrix defined as

\begin{align*}
P_1&=\left(\begin{array}{@{}cccccccc@{}}
0 & 1 & 1 & 1 & 1 & \cdots & 1\\
-1 & 0 & -1 & & & & &  \\
& -1 & 0 & -1 & & & &  \\
& & -1 & 0 & -1 & & &  \\
 & &  & \ddots & \ddots & \ddots & \\
& & & &  -1 & 0 & -1 \\
0 & 0 & 1 & \cdots& 1 & 0 & 1
\end{array}\right).
\end{align*}

\bigskip
Step 6

Apply column operations on $P_1$. On the 3rd column, apply
$(C3)-(C1)$. On the 4th column, apply $(C4)-(C2)$.

Then expand the determinant along the 2nd row and the 3rd row. We
get det$(P_1) = $ det$(P_2)$ where $P_2$ is an $(n-5)\times(n-5)$
integral matrix defined as

\begin{align*}
P_2&=\left(\begin{array}{@{}ccccc@{}}
1 & 0 & 1 & \cdots & 1 \\
-1 & 0 & -1 &  &  \\
 & \ddots & \ddots & \ddots & \\
 &  & -1 & 0 & -1 \\
1 & \cdots & 1 & 0 & 1
\end{array}\right).
\end{align*}

But the matrix $P_2$ looks the same as $P_0$ except the size.
Done.

\end{proof}

\begin{lemma} \label{l4.7}
Let $M_1$ and $M_2$ be $G$-lattices belonging to the same genus.
Then $M_1$ is flabby if and only if so is $M_2$.
\end{lemma}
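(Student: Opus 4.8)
The plan is to recast flabbiness as a condition that can be checked one prime at a time, and then to use that ``same genus'' means exactly ``locally isomorphic at every prime''. Recall that $M$ is flabby precisely when $H^{-1}(S,M)=0$ for every subgroup $S\le G$, where $H^{-1}(S,M)$ is the Tate cohomology group $\{m\in M:N_S m=0\}/I_S M$, with $N_S=\sum_{s\in S}s$ the norm and $I_S M$ the image of the augmentation ideal of $\bm{Z}[S]$ acting on $M$. For a finite group $S$ and a lattice $M$ this group is finitely generated and annihilated by $|S|$, hence finite. A finite abelian group $A$ is the direct sum of its $l$-primary components $A\otimes_{\bm{Z}}\bm{Z}_l$ over the primes $l$, so $A=0$ if and only if $A\otimes_{\bm{Z}}\bm{Z}_l=0$ for every prime $l$. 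Thus $M$ is flabby if and only if $H^{-1}(S,M)\otimes_{\bm{Z}}\bm{Z}_l=0$ for every subgroup $S\le G$ and every prime $l$.

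The key step is that Tate cohomology commutes with localization. Since $\bm{Z}_l$ is flat over $\bm{Z}$, and $H^{-1}(S,M)$ is the homology at one spot of a complex of finitely generated $\bm{Z}[S]$-modules obtained by tensoring a complete resolution of $\bm{Z}$ over $\bm{Z}[S]$ with $M$, flatness lets $-\otimes_{\bm{Z}}\bm{Z}_l$ pass through the homology, giving a natural isomorphism
\[
H^{-1}(S,M)\otimes_{\bm{Z}}\bm{Z}_l \;\simeq\; H^{-1}\!\left(S,\,M\otimes_{\bm{Z}}\bm{Z}_l\right)=H^{-1}(S,M_l),
\]
where $M_l$ denotes the localization viewed as a $\bm{Z}_l[S]$-lattice. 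Combining this with the first paragraph, $M$ is flabby if and only if $H^{-1}(S,M_l)=0$ for all $S\le G$ and all primes $l$. I expect this localization isomorphism to be the only delicate point: one must be sure the complete resolution used to compute $H^{-1}$ is finitely generated in each degree (automatic for finite $S$) so that tensoring with the flat module $\bm{Z}_l$ commutes with taking homology. An alternative that sidesteps resolutions is to apply the exact functor $-\otimes_{\bm{Z}}\bm{Z}_l$ directly to the short exact sequences defining $\ker(N_S)$ and the quotient by $I_S M$; flatness preserves exactness, yielding the same isomorphism.

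Finally I invoke the hypothesis. By the definition of belonging to the same genus (Definition \ref{d4.2}), $(M_1)_l\simeq(M_2)_l$ as $\bm{Z}_l[G]$-lattices for every prime $l$. Restricting the action to any subgroup $S\le G$, this remains an isomorphism of $\bm{Z}_l[S]$-lattices, so $H^{-1}(S,(M_1)_l)\simeq H^{-1}(S,(M_2)_l)$ for all $S$ and all $l$. By the local criterion established above, $M_1$ is flabby exactly when all the groups $H^{-1}(S,(M_1)_l)$ vanish, and likewise for $M_2$ with the $(M_2)_l$; since the two families are isomorphic term by term, one family vanishes if and only if the other does. Therefore $M_1$ is flabby if and only if $M_2$ is flabby.
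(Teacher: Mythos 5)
Your proof is correct and follows essentially the same route as the paper's own argument: reduce flabbiness to the vanishing of the finite groups $H^{-1}(S,M)$, use that localization at each prime $l$ commutes with Tate cohomology, and conclude from the definition of genus that the localized lattices (hence their cohomology) agree. The paper also records an alternative proof via Roiter's Theorem ($M_1\oplus L\simeq M_2\oplus\bm{Z}[G]$ for a rank-one projective $L$), but your localization argument matches its primary proof.
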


\begin{proof}
For any subgroup $S$ of $G$, $H^{-1}(S,M_i)$ is a finite abelian group.
If $H^{-1}(S,M_1)\allowbreak\ne 0$ for some subgroup $S$,
then there is some prime number $l$ such
that $H^{-1}(S,\allowbreak\bm{Z}_l[G]\otimes_{\bm{Z}[G]}M_1)\ne 0$ because localization commutes with taking Tate cohomology.
Thus $H^{-1}(S,M_2)\ne 0$
because $\bm{Z}_l[G]\otimes_{\bm{Z}[G]} M_1\simeq \bm{Z}_l[G]\otimes_{\bm{Z}[G]}M_2$.

We thank Kunyavskii for providing the following alternative proof.
By Roiter's Theorem \cite[page 660]{CR}, there is a rank-one
projective module $L$ over $\bm{Z}[G]$ such that $M_1 \oplus L
\simeq M_2 \oplus \bm{Z}[G]$. Hence the result.
\end{proof}

\begin{prop} \label{p4.8}
Let the notations be the same as in Theorem \ref{t4.1}.
Then the indecomposable $G$-lattices which are flabby are
\[
\bm{Z},~\bm{Z}[H],~V_{\c{A}},~ (Y_0)_{\c{A}},~ (Y_1)_{\c{A}},~ (Y_2)_{\c{A}},
\]
while the remaining ones are not flabby.
\end{prop}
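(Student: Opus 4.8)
The plan is to combine the genus-invariance of flabbiness (Lemma \ref{l4.7}) with the structural identifications of Lee's lattices obtained in Lemmas \ref{l4.4}--\ref{l4.6} and the stable-permutation results of Section 3. First, by Definition \ref{d4.2} each of $R\c{A}$, $P\c{A}$, $V_{\c{A}}$, $X_{\c{A}}$, $(Y_i)_{\c{A}}$ lies in the same genus as its principal representative $R$, $P$, $V$, $X$, $Y_i$; hence by Lemma \ref{l4.7} it is flabby if and only if that representative is. Thus it suffices to decide flabbiness for the finite list $\bm{Z},\bm{Z}_-,\bm{Z}[H],R,P,V,X,Y_0,Y_1,Y_2$.

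For the flabby direction I would compute no cohomology directly, but instead recognise each of the six asserted lattices as permutation or stably permutation, using that $H^{-1}(S,-)$ is additive, vanishes on permutation lattices (by Shapiro's lemma, since a permutation lattice restricted to $S$ is a sum of lattices $\fn{Ind}_{S'}^S\bm{Z}$ and $H^{-1}(S',\bm{Z})=0$), and therefore vanishes on stably permutation lattices as well. Concretely, $\bm{Z}=\bm{Z}[G/G]$ and $\bm{Z}[H]=\bm{Z}[G/\langle\sigma\rangle]$ are permutation; by Lemma \ref{l4.4}, $V\simeq M_+=\fn{Ind}_H^G\bm{Z}=\bm{Z}[G/\langle\tau\rangle]$ is permutation; by Lemma \ref{l4.6}, $Y_2\simeq\bm{Z}[G]$ is permutation; and by Lemma \ref{l4.5}, $Y_1\simeq\widetilde{M}_+$ and $Y_0\simeq\widetilde{M}_-$, which are stably permutation by Theorems \ref{t3.4} and \ref{t3.5}. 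This disposes of all six flabby cases at once.

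For the non-flabby direction I would exhibit, for each of $\bm{Z}_-,R,P,X$, a single subgroup on which $H^{-1}$ is nonzero. Since the proper nontrivial subgroups of $D_p$ are $\langle\sigma\rangle\simeq C_p$ and the mutually conjugate order-two subgroups, and since $H^{-1}(\{1\},-)=0$ while $H^{-1}$ is conjugation-invariant, it is enough to test $S\in\{C_p,H,G\}$. For $\bm{Z}_-$ one computes over $H=\langle\tau\rangle$ that $H^{-1}(H,\bm{Z}_-)=\ker(1+\tau)/(\tau-1)\bm{Z}_-=\bm{Z}_-/2\bm{Z}_-\cong\bm{Z}/2$. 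For $R$, restricting to $C_p$ the norm $1+\sigma+\cdots+\sigma^{p-1}$ acts as $0$, so $H^{-1}(C_p,R)=R/(\sigma-1)R=R/P\cong\bm{Z}/p$; and since multiplication by $1-\zeta_p$ is a $\sigma$-equivariant isomorphism $R\to P$, the $C_p$-lattices $R$ and $P$ are isomorphic, whence $H^{-1}(C_p,P)\cong\bm{Z}/p$ too. Finally, by Lemma \ref{l4.4} $X\simeq M_-=\fn{Ind}_H^G\bm{Z}_-$, so Shapiro's lemma gives $H^{-1}(G,X)\cong H^{-1}(H,\bm{Z}_-)\cong\bm{Z}/2$.

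The main obstacle is not computational but organisational: the proof hinges entirely on correctly matching Lee's lattices $R,P,V,X,Y_0,Y_1,Y_2$ with the explicit lattices $M_\pm,\widetilde{M}_\pm,\bm{Z}[G]$ of Section 3, so that each flabby case collapses to ``(stably) permutation''. The only genuine cohomology to carry out is the four short computations above; among these the one requiring care is the identity $(\sigma-1)R=P$ together with the $C_p$-isomorphism $R\simeq P$, and the appeal to Shapiro's lemma for $X$, which reduces a global computation to the trivial $C_2$-computation for $\bm{Z}_-$.
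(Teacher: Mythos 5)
Your proof is correct. The reduction via genus (Lemma \ref{l4.7}) and the treatment of the six flabby lattices --- identifying them as permutation or stably permutation via Lemmas \ref{l4.4}--\ref{l4.6} and Theorems \ref{t3.4}, \ref{t3.5} --- coincide with what the paper does. Where you genuinely diverge is in the non-flabby half. The paper does not compute $H^{-1}$ directly for $R$, $P$, $X$, $\bm{Z}_-$: it first invokes Theorem \ref{t2.7} (all Sylow subgroups of $D_p$ are cyclic, so flabby $\Rightarrow$ invertible $\Rightarrow$ coflabby), and then rules out $\bm{Z}_-$ and $X$ by showing $H^1\ne 0$ (via Hochschild--Serre for $\bm{Z}_-$, via restriction to $\langle\tau\rangle$ for $X\simeq M_-$), and rules out $R$ and $P$ by showing they are not invertible (if they were, the augmentation sequences $0\to R\to\bm{Z}[S]\to\bm{Z}\to 0$ and $0\to P\to\bm{Z}[G/\langle\tau\rangle]\to\bm{Z}\to 0$ would split by Lenstra's Proposition 1.2, contradicting indecomposability). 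Your route instead verifies non-flabbiness straight from the definition: $\hat H^{-1}(H,\bm{Z}_-)=\bm{Z}/2$, $\hat H^{-1}(C_p,R)=R/(\zeta_p-1)R=\bm{Z}/p$, the same for $P$ via the $\sigma$-equivariant isomorphism $R\xrightarrow{\,1-\zeta_p\,}P$, and $\hat H^{-1}(G,X)\cong\hat H^{-1}(H,\bm{Z}_-)$ by Shapiro. All four computations check out, and your argument is self-contained: it needs neither Theorem \ref{t2.7} nor the splitting criterion, at the modest cost of four explicit Tate cohomology calculations. Both approaches are valid; yours is the more elementary, the paper's illustrates the invertibility machinery it uses elsewhere.
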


\begin{proof}
Remember that $V_{\c{A}}$ and $V$ belonging to the same genus (see Definition \ref{d4.2}).
Apply Lemma \ref{l4.7}.
It suffices to check whether the following ten lattices
\[
\bm{Z},~\bm{Z}[H],~ V,~ Y_0,~ Y_1,~ Y_2,~ \bm{Z}_-,~ R,~ P,~ X
\]
are flabby lattices.

Since $\bm{Z}$, $\bm{Z}[H]$ are permutation lattices, they are flabby.

By Lemma \ref{l4.4}, Lemma \ref{l4.5} and Lemma \ref{l4.6},
we find that $V\simeq M_+$, $Y_0\simeq \widetilde{M}_-$, $Y_1\simeq \widetilde{M}_+$, $Y_2\simeq \bm{Z}[G]$.
$M_+$ and $\bm{Z}[G]$ are permutation lattices.
Hence they are flabby.
As to $\widetilde{M}_-$ and $\widetilde{M}_+$.
Applying Theorem \ref{t3.4} and Theorem \ref{t3.5},
we find that they are stably permutation.
Thus they are flabby also.

Now we turn to the non-flabby cases.
Since $G=D_p$, a flabby lattices is necessarily an invertible lattice by Theorem \ref{t2.7}; thus it is also coflabby.
In summary, if we want to show that a $G$-lattice $M$ is not flabby,
we may show that it is not coflabby or it is not invertible.

\medskip
For $\bm{Z}_-$, $H^1(G,\bm{Z}_-)=\bm{Z}/2\bm{Z}$. For, write
$\bm{Z}_-=\bm{Z}\cdot w$ with $\sigma(w)=w$, $\tau(w)=-w$. Apply
the Hochschild-Serre spectral sequence $0\to H^1(\langle
\tau\rangle, \bm{Z}_-^{\langle\sigma\rangle})\to H^1(G,\bm{Z}_-)
\to H^1(\langle\sigma\rangle,\bm{Z}_-)^{\langle\tau\rangle}$.
Because $H^1(\langle\sigma\rangle,\bm{Z}_-)=0$ and
$H^1(\langle\tau\rangle,\bm{Z}^{\langle\sigma\rangle}_-)=\bm{Z}/2\bm{Z}$,
we find that $H^1(G,\bm{Z}_-)=\bm{Z}/2\bm{Z}\ne 0$.

\medskip
For $R$, if $R$ is a flabby $G$-lattice, then it is invertible.
Restricted to the subgroup $S=\langle\sigma\rangle$, $R$ become an
invertible $S$-lattice.

{}From the short exact sequence of $S$-lattices $0\to R\to \bm{Z}[S]
\xrightarrow{\varepsilon} \bm{Z}\to 0$ where $\varepsilon$ is the
augmentation map, since $R$ is $S$-invertible and $\bm{Z}$ is
$S$-permutation, it follows that the sequence splits
\cite[Proposition 1.2]{Len}. Thus $\bm{Z}[S]\simeq R\oplus \bm{Z}$
is not indecomposable as an $S$-lattice. This leads to a
contradiction.

\medskip
For $P$, if $P$ is a flabby $G$-lattice, then it is
invertible. From the short exact sequence of $G$-lattices $0\to P\to
\bm{Z}[G/\langle\tau\rangle]\xrightarrow{\varepsilon} \bm{Z}\to 0$
where $\varepsilon$ is the augmentation map, since $P$ is
$G$-invertible and $\bm{Z}$ is $G$-permutation, the sequence
splits by \cite[Proposition 1.2]{Len}. Thus
$\bm{Z}[G/\langle\tau\rangle]$ is a decomposable $G$-lattice. But
$\bm{Z}[G/\langle\tau\rangle]=\fn{Ind}^G_{\langle\tau\rangle}\bm{Z}=M_+$
by Definition \ref{d3.1}. We find a contradiction again.

\medskip
For $X$, we know that $X\simeq M_-$ by Lemma \ref{l4.3}. Let
$S=\langle\tau\rangle$. Regard $M$ as an $S$-lattice. By
Definition \ref{d3.1}, as an $S$-lattice, $M_-\simeq N\oplus
\bm{Z}$ where $N$ is isomorphic to $\frac{p-1}{2}$ copies of
$\bm{Z}[S]$. Thus $H^1(S,M_-)=H^1(S,N\oplus \bm{Z}_-)\simeq
H^1(S,\bm{Z}_-)=\bm{Z}/2\bm{Z}\ne 0$. Hence $X\simeq M_-$ is not
coflabby.
\end{proof}

\begin{theorem} \label{t4.9}
Let $G=\langle\sigma,\tau:\sigma^p=\tau^2=1,\tau\sigma\tau^{-1}=\sigma^{-1}\rangle\simeq D_p$ where $p$ is an odd prime number.
Assume that $h_p^+=1$.
\begin{enumerate}
\item[{\rm (1)}]
Then there are precisely ten non-isomorphic indecomposable $G$-lattices
\[
\bm{Z},~\bm{Z}_-,~\bm{Z}[H],~R,~P,~V,~X,~Y_0,~Y_1,~Y_2.
\]

Among these lattices, $\bm{Z}$, $\bm{Z}[H]$, $V$, $Y_2$ are
permutation $G$-lattices, $Y_0$ and $Y_1$ are stably permutation
$G$-lattices, $\bm{Z}_-$, $R$, $P$, $X$ are not flabby
$G$-lattices. \item[{\rm (2)}] If $k$ is a field admitting a
$G$-extension, then any $G$-torus defined over $k$ is stably
$k$-rational. In other words, if $K/k$ is a Galois extension field
with $\fn{Gal}(K/k)\simeq G$ and $M$ is any $G$-lattices, then
$K(M)^G$ is stably $k$-rational.

\end{enumerate}
\end{theorem}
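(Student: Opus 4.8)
The plan is to assemble Lee's classification (Theorem \ref{t4.1}) with the lattice identifications of Lemmas \ref{l4.4}--\ref{l4.6} and the stable-permutation computations of Section 3, and then to invoke Theorem \ref{t2.6}(1) to translate everything into rationality.

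For Part (1), I would begin from Theorem \ref{t4.1}, which gives exactly $7h_p^++3$ indecomposable $G$-lattices; the hypothesis $h_p^+=1$ forces this count to be $10$. Since $h_p^+=1$ means every ideal class of $R_0$ is principal, each $\c{A}$ may be taken principal, so by the conventions of Definition \ref{d4.2} the families $R\c{A}$, $P\c{A}$, $V_{\c{A}}$, $X_{\c{A}}$, $(Y_i)_{\c{A}}$ collapse to the single lattices $R,P,V,X,Y_0,Y_1,Y_2$, and together with $\bm{Z},\bm{Z}_-,\bm{Z}[H]$ these are the ten lattices claimed. The permutation/stably-permutation/non-flabby trichotomy is then read off from earlier results: $\bm{Z}$ and $\bm{Z}[H]=\bm{Z}[G/\langle\sigma\rangle]$ are permutation by definition; Lemma \ref{l4.4} gives $V\simeq M_+=\bm{Z}[G/\langle\tau\rangle]$ and Lemma \ref{l4.6} gives $Y_2\simeq\bm{Z}[G]$, both permutation; Lemma \ref{l4.5} gives $Y_0\simeq\widetilde{M}_-$ and $Y_1\simeq\widetilde{M}_+$, which are stably permutation by Theorems \ref{t3.5} and \ref{t3.4}; and the non-flabbiness of $\bm{Z}_-,R,P,X$ is exactly Proposition \ref{p4.8}.

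For Part (2), I would apply Theorem \ref{t2.6}(1): it suffices to show that $[M]^{fl}$ is a permutation class for every $G$-lattice $M$. Choosing a flabby resolution $0\to M\to P\to E\to 0$ (Definition \ref{d2.3}), we have $[M]^{fl}=[E]$ with $E$ flabby. Now decompose $E$ into indecomposable $G$-lattices; since $G=D_p$, the only indecomposables available are the ten of Part (1). A direct summand of a flabby lattice is again flabby, because $H^{-1}(S,-)$ is additive and hence $H^{-1}(S,E_i)$ is a direct summand of $H^{-1}(S,E)=0$ for every subgroup $S$; therefore each indecomposable summand of $E$ is one of the flabby indecomposables $\bm{Z},\bm{Z}[H],V,Y_0,Y_1,Y_2$ singled out in Proposition \ref{p4.8}. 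By Part (1) each of these is stably permutation, so $E$ is stably permutation and $[E]$ is a permutation class. Theorem \ref{t2.6}(1) then yields that $K(M)^G$ is stably $k$-rational, which is the assertion for an arbitrary $G$-torus over $k$.

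The main obstacle, already dispatched in Section 3, is the claim that $Y_0$ and $Y_1$ are stably permutation. The general theory of Theorem \ref{t2.7} (all Sylow subgroups of $D_p$ are cyclic) only guarantees that flabby $D_p$-lattices are invertible, which gives retract rationality (Proposition \ref{p3.7}) but not stable rationality; everything would collapse at this weaker level were it not possible to upgrade invertibility to stable permutation. That upgrade is precisely the content of the explicit isomorphisms $\widetilde{M}_+\oplus\bm{Z}\simeq\bm{Z}[G/\langle\sigma\rangle]\oplus\bm{Z}[G/\langle\tau\rangle]$ and $\widetilde{M}_-\oplus\bm{Z}[G/\langle\tau\rangle]\simeq\bm{Z}[G]\oplus\bm{Z}$, whose verification rests on the determinant computations of Theorems \ref{t3.4} and \ref{t3.5}. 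Once those identities and Lee's classification are in hand, the remainder of the argument is bookkeeping, since the failure of Krull--Schmidt is harmless here: no matter how $E$ decomposes, every indecomposable flabby summand lies on the stably-permutation list.
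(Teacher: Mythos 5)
Your proposal is correct and follows essentially the same route as the paper: Part (1) is read off from Theorem \ref{t4.1} together with the identifications in Lemmas \ref{l4.4}--\ref{l4.6}, Theorems \ref{t3.4}--\ref{t3.5} and Proposition \ref{p4.8}, and Part (2) proceeds by taking a flabby resolution, decomposing the flabby cokernel into indecomposables (each necessarily flabby, hence on the stably-permutation list), and invoking Theorem \ref{t2.6}(1). Your added remark that additivity of $H^{-1}(S,-)$ makes direct summands of flabby lattices flabby is a correct justification of a step the paper leaves implicit.
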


\begin{proof}
(1) follows from Theorem \ref{t4.1} and the proof of Proposition \ref{p4.8}.

For the proof of (2), apply Theorem \ref{t2.6}. It suffices to
show that $[M]^{fl}$ is permutation for any $G$-lattice $M$.

Choose any exact sequence of $G$-lattices $0\to M\to Q\to E\to 0$
where $Q$ is a permutation $G$-lattice and $E$ is a flabby
$G$-lattice. Write $E$ as a direct sum of indecomposable
$G$-lattices $E=\bigoplus_{1\le i\le m} E_i$ where each $E_i$ is
indecomposable. It is necessary that each $E_i$ is flabby. Apply
the result in (1), we find that $E$ is stably permutation, i.e.\
there is an permutation $G$-lattice $Q'$ such that $E\oplus Q'$ is
a permutation $G$-lattice. Thus we get an exact sequence $0\to
M\to Q\oplus Q'\to E\oplus Q'\to 0$ where $Q\oplus Q'$ and
$E\oplus Q'$ are permutation $G$-lattices, i.e.\ $[M]^{fl}$ is
permutation.

\end{proof}

\begin{theorem} \label{t4.10}
Let $G=\langle\sigma,\tau:\sigma^p=\tau^2=1,\tau\sigma\tau^{-1}=\sigma^{-1}\rangle\simeq D_p$ where $p$ is an odd prime number.
Let $M$ be a $G$-lattice such that $[M]^{fl}$ is a direct sum of $\bm{Z}$, $\bm{Z}[H]$,
 $V$, $Y_0$, $Y_1$, $Y_2$ (up to a direct summand of some permutation $G$-lattice).
If $K/k$ is a Galois field extension with $\fn{Gal}(K/k)\simeq G$,
then $K(M)^G$ is stably $k$-rational.
\end{theorem}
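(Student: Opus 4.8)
The plan is to reduce the whole statement to Theorem \ref{t2.6} (1), which asserts that $K(M)^G$ is stably $k$-rational if and only if $[M]^{fl}$ is permutation in the flabby class monoid $F_G$. Thus the theorem will follow once I show that each of the six indecomposable flabby lattices $\bm{Z}$, $\bm{Z}[H]$, $V$, $Y_0$, $Y_1$, $Y_2$ appearing in the hypothesis is stably permutation; for then any direct sum of copies of them (modulo a permutation direct summand) is again stably permutation, so its class in $F_G$ coincides with that of a permutation lattice, giving $[M]^{fl}$ permutation.

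First I would dispose of the four lattices that are outright permutation lattices. The lattice $\bm{Z}$ is the trivial permutation lattice, and by the identification in Theorem \ref{t4.1} one has $\bm{Z}[H]\simeq\bm{Z}[G/\langle\sigma\rangle]$, the permutation lattice on the two-element $G$-set $G/\langle\sigma\rangle$. By Lemma \ref{l4.4}, $V\simeq M_+=\fn{Ind}^G_H\bm{Z}=\bm{Z}[G/\langle\tau\rangle]$, again a permutation lattice; and by Lemma \ref{l4.6}, $Y_2\simeq\bm{Z}[G]$, the regular representation, which is permutation. Hence these four summands contribute nothing beyond permutation lattices.

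The two remaining cases, $Y_0$ and $Y_1$, are where the stable (as opposed to honest) permutation property is genuinely needed, and here the constructions of Section 3 do the work. By Lemma \ref{l4.5} we have $Y_0\simeq\widetilde{M}_-$ and $Y_1\simeq\widetilde{M}_+$. Theorem \ref{t3.5} gives $\widetilde{M}_-\oplus\bm{Z}[G/\langle\tau\rangle]\simeq\bm{Z}[G]\oplus\bm{Z}$, and Theorem \ref{t3.4} gives $\widetilde{M}_+\oplus\bm{Z}\simeq\bm{Z}[G/\langle\sigma\rangle]\oplus\bm{Z}[G/\langle\tau\rangle]$; in both identities the right-hand side and the summand added on the left are permutation lattices, so $Y_0$ and $Y_1$ are stably permutation.

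Assembling these, I would write $[M]^{fl}=[E]$ with $E$ a direct sum of copies of the six lattices together with some permutation lattice. Since each summand is stably permutation and the stably-permutation property is closed under direct sums and under adjoining permutation lattices, $E$ itself is stably permutation, so $[E]$ is the class of a permutation lattice in $F_G$; hence $[M]^{fl}$ is permutation and Theorem \ref{t2.6} (1) finishes the argument. I do not expect a serious obstacle: the theorem is essentially a bookkeeping assembly of the structural results already established, the only point requiring a moment's care being the correct permutation identifications of $V$ and $\bm{Z}[H]$ through their induced and quotient descriptions, so that the conclusion is closed under the direct sums allowed in the hypothesis.
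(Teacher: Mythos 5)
Your proposal is correct and follows essentially the same route as the paper, which simply refers back to the proof of part (2) of Theorem \ref{t4.9}: identify the six summands as permutation ($\bm{Z}$, $\bm{Z}[H]$, $V\simeq M_+$, $Y_2\simeq\bm{Z}[G]$) or stably permutation ($Y_0\simeq\widetilde{M}_-$, $Y_1\simeq\widetilde{M}_+$ via Theorems \ref{t3.5} and \ref{t3.4}), conclude that $[M]^{fl}$ is permutation, and invoke Theorem \ref{t2.6}(1). You have merely unwound the cross-references that the paper leaves implicit; no gap.
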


\begin{proof}
Follow the proof of (2) of Theorem \ref{t4.9}.
\end{proof}

\begin{theorem} \label{t4.11}
Let $G\simeq D_p$ where $p$ is a prime number. If $h_p^{+} =1$ and
$M$ is a $G$-lattice which is flabby and coflabby, then $M$ is
stably permutation.

In fact, when $h_p^{+} =1$ and $p$ is an odd prime number, then a
$G$-lattice is flabby (resp. flabby and coflabby) if and only if
it is stably permutation.
\end{theorem}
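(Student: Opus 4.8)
The plan is to read off the conclusion from the explicit list of indecomposable flabby $D_p$-lattices supplied by Proposition~\ref{p4.8} and Theorem~\ref{t4.9}, combined with the stable-permutation identities of Section~3. I would organize the argument into the easy converse, the reduction ``flabby $=$ flabby and coflabby'', the main direction for odd $p$, and finally the degenerate case $p=2$.

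First, the converse. Every permutation $G$-lattice $P$ is flabby and coflabby: for each subgroup $S\le G$, Shapiro's lemma reduces $H^{-1}(S,P)$ and $H^{1}(S,P)$ to $H^{-1}(T,\bm{Z})$ and $H^{1}(T,\bm{Z})$ taken over finite subgroups $T$, and both of these vanish. Since $H^{-1}(S,-)$ and $H^{1}(S,-)$ are additive, these vanishings pass to direct summands and are unaffected by adjoining a permutation lattice. Hence any stably permutation lattice is flabby and coflabby, which supplies the ``only if'' direction of both equivalences in the statement.

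Next, for $p$ an odd prime I would note that, for $D_p$, the predicates ``flabby'' and ``flabby and coflabby'' single out the same lattices: all Sylow subgroups of $D_p$ are cyclic, so by Theorem~\ref{t2.7} every flabby $G$-lattice is invertible, i.e.\ a direct summand of a permutation lattice, and as such it is coflabby by the previous paragraph. Thus it suffices to show that a flabby $G$-lattice $M$ is stably permutation. I would decompose $M=\bigoplus_i E_i$ into indecomposable $G$-lattices; each $E_i$ is a direct summand of the flabby lattice $M$, hence is itself flabby. Because $h_p^{+}=1$ forces every ideal $\c{A}$ of $R_0$ to be principal, Proposition~\ref{p4.8} (sharpened by Theorem~\ref{t4.9}) tells us that each $E_i$ is isomorphic to one of the six lattices $\bm{Z},\ \bm{Z}[H],\ V,\ Y_0,\ Y_1,\ Y_2$. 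Each of these is stably permutation: $\bm{Z}$, $\bm{Z}[H]$, $V\simeq M_+=\bm{Z}[G/\langle\tau\rangle]$ and $Y_2\simeq\bm{Z}[G]$ are outright permutation, while $Y_1\simeq\widetilde{M}_+$ and $Y_0\simeq\widetilde{M}_-$ are stably permutation by Theorems~\ref{t3.4} and~\ref{t3.5} respectively. A finite direct sum of stably permutation lattices is again stably permutation, so $M$ is stably permutation; this proves the remaining implication, and hence both equivalences, for odd $p$, and in particular the first assertion for odd $p$.

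Finally, when $p=2$ we have $G\simeq C_2\times C_2$, whose Sylow $2$-subgroup is not cyclic, so Theorem~\ref{t2.7} is unavailable and the coflabby hypothesis can no longer be dropped; this is where I expect the main obstacle to lie. Since $C_2\times C_2$ is of finite representation type, I would fall back on the explicit finite list of its indecomposable lattices, verifying directly that each indecomposable lattice which is both flabby and coflabby is stably permutation (the relevant Klein four analysis being that of Kunyavskii, cf.\ Theorem~\ref{t1.2}(1)); a summand of a flabby and coflabby lattice is again flabby and coflabby, so an arbitrary such $M$ decomposes into these indecomposables and is stably permutation. The conceptual crux of the whole argument, already carried out upstream in Theorems~\ref{t3.4} and~\ref{t3.5}, is that the indecomposable flabby lattices $Y_0$ and $Y_1$ are not themselves permutation yet become permutation after adding a single permutation summand---the controlled failure of Krull--Schmidt. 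The hypothesis $h_p^{+}=1$ is indispensable precisely because it confines the roster of indecomposable flabby lattices to these six stably permutation types; for larger $h_p^{+}$ the lattices $(Y_0)_{\c{A}},(Y_1)_{\c{A}}$ attached to non-principal $\c{A}$ would enter and destroy the conclusion.
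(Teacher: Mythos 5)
For odd $p$ your argument is correct and is essentially the paper's: Theorem \ref{t2.7} collapses ``flabby'' and ``flabby and coflabby'' into a single condition (a flabby lattice is invertible, hence a summand of a permutation lattice, hence coflabby), and then the hypothesis $h_p^+=1$ together with Proposition \ref{p4.8} and Theorems \ref{t3.4}, \ref{t3.5} shows that every indecomposable flabby summand is one of $\bm{Z}$, $\bm{Z}[H]$, $V$, $Y_0$, $Y_1$, $Y_2$, each stably permutation. This is exactly the content of Theorem \ref{t4.9}, which the paper's proof simply invokes.

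The $p=2$ case, however, contains a genuine error. You assert that $C_2\times C_2$ is of finite integral representation type and propose to verify the claim on the finite list of its indecomposable lattices. This premise is false: a group ring $\bm{Z}[G]$ has only finitely many indecomposable lattices when every Sylow $p$-subgroup is cyclic of order at most $p^2$ (Jones' theorem), and the Sylow $2$-subgroup of the Klein four group is not cyclic; Nazarova's classification exhibits infinitely many indecomposable $\bm{Z}[C_2\times C_2]$-lattices. So the proposed finite case-check cannot be carried out, and the reduction to summands does not rescue it, since you have no a priori bound on which indecomposables can occur as summands of a flabby and coflabby lattice. Moreover, the result you cite in its place, Kunyavskii's Theorem \ref{t1.2}(1) (a $(C_2\times C_2)$-torus is stably rational if and only if it is rational), is a different statement and does not yield the implication ``flabby and coflabby $\Rightarrow$ stably permutation.'' The paper instead quotes Colliot-Th\'el\`ene and Sansuc \cite[Proposition 4]{CTS}, who prove precisely this implication for the Klein four group by a direct argument rather than by classifying indecomposables; note that for $p=2$ the hypothesis $h_2^+=1$ is vacuous, so this external input is genuinely needed.
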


\begin{proof}
If $p$ is an odd prime number, then a $G$-lattice is flabby if and
only if it is both flabby and coflabby by Theorem \ref{t2.7}. Then
apply Theorem \ref{t4.9}.

If $p=2$, i.e. $G=C_2\times C_2$ is the Klein-four group, by
Colliot-Th\'el\`ene and Sansuc's result \cite[Proposition 4]{CTS},
a $G$-lattice which is flabby and coflabby is necessarily stably
permutation.
\end{proof}

\begin{remark}
A similar result also due to Colliot-Th\'el\`ene and Sansuc is
that, if $G$ is the quaternion group of order 8, an invertible
$G$-lattice is stably permutation \cite[R5, page 187]{CTS}. See
\cite{EM} for related results.
\end{remark}

\section{Steinitz classes}

Let $p$ be an odd prime number. Suppose that $h_p^+\ge 2$ and
$\c{A}$ is a non-principal ideal in
$R_0=\bm{Z}[\zeta_p+\zeta_p^{-1}]$. In this section we will show
that $V_{\c{A}}$, $(Y_0)_{\c{A}}$, $(Y_1)_{\c{A}}$,
$(Y_2)_{\c{A}}$ are not stably permutation $G$-lattices by
applying the Steinitz classes of these $G$-lattices.

Recall the integral representations of cyclic groups of prime order.

\begin{theorem}[{Diederichsen and Reiner \cite[page 729, Theorem 34.31; Sw1, page 74, Theorem 4.19]{Di,Re,CR}}] \label{t5.1}
Let $S=\langle\sigma\rangle\simeq C_p$ the cyclic group of prime
order $p$, $h_p$ be the class number of $\bm{Q}(\zeta_p)$. Let
$\c{B}$ range over a full set of representatives of the $h_p$
ideal classes of $\bm{Z}[\zeta_p]$. Then there are precisely
$2h_p+1$ isomorphism classes of indecomposable $S$-lattices, and
there are represented by
\[
\bm{Z},~ \c{B}
\]
and the non-split extensions
\[
0\to \c{B}\to W_{\c{B}} \to \bm{Z} \to 0.
\]
\end{theorem}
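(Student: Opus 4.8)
The plan is to reconstruct the Diederichsen--Reiner classification from the arithmetic of $\Lambda=\bm{Z}[S]=\bm{Z}[\sigma]/(\sigma^p-1)$, using the factorization $\sigma^p-1=(\sigma-1)\Phi_p(\sigma)$ with $\Phi_p(\sigma)=1+\sigma+\cdots+\sigma^{p-1}$. First I would record the conductor square
\[
\xymatrix{\Lambda \ar[r] \ar[d] & \bm{Z}[\zeta_p] \ar[d] \\ \bm{Z} \ar[r] & \bm{F}_p}
\]
where $\Lambda\to\bm{Z}[\zeta_p]$ is $\sigma\mapsto\zeta_p$ (reduction modulo $\Phi_p(\sigma)$), $\Lambda\to\bm{Z}$ is the augmentation $\sigma\mapsto 1$, and both $\bm{Z}[\zeta_p]$ and $\bm{Z}$ surject onto $\bm{F}_p=\bm{Z}/p\bm{Z}$ by $\zeta_p\mapsto 1$, $1\mapsto 1$. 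This is the $C_p$-analogue of the fibre product in Definition \ref{d3.2}; one checks it is a genuine pullback of rings, and since the bottom ring $\bm{F}_p$ is a field the relevant patching of modules over the square (in the style of Milnor patching) is available.

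Next I would attach patching data to a lattice. Rationally $\bm{Q}[S]\simeq\bm{Q}\times\bm{Q}(\zeta_p)$, so for an $S$-lattice $M$ I split $M\otimes\bm{Q}=V_0\oplus V_1$ into its trivial part $V_0$ and cyclotomic part $V_1$ and set $M_0=M\cap V_0$, $M_1=M\cap V_1$. Here $M_0\simeq\bm{Z}^a$ carries the trivial action and $M_1$ is a $\bm{Z}[\zeta_p]$-lattice, because $\sigma-1$ kills $V_0$ while $\Phi_p(\sigma)$ kills $V_1$. A direct check shows $M_0\oplus M_1$ has finite index in $M$ and that $T=M/(M_0\oplus M_1)$ is annihilated by $p$ (indeed $\Phi_p(\sigma)M\subseteq M_0$ and $(p-\Phi_p(\sigma))M\subseteq M_1$, so $pm=\Phi_p(\sigma)m+(p-\Phi_p(\sigma))m\in M_0\oplus M_1$). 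Reducing by $\Phi_p(\sigma)$ and by $\sigma-1$ then embeds $T$ as a subspace both of $M_0/pM_0$ and of $M_1/PM_1$, where $P=\langle 1-\zeta_p\rangle$ and $\bm{Z}[\zeta_p]/P\simeq\bm{F}_p$. Thus $M$ is exactly the pullback reconstructed from $(M_0,M_1)$ together with the gluing subspace $T$, and classifying $M$ reduces to classifying this gluing datum over the field $\bm{F}_p$.

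The key step is to decompose this gluing over $\bm{F}_p$. Each copy of $\bm{Z}$ in $M_0$ contributes a one-dimensional piece of $M_0/pM_0$, and, by the Steinitz theory of lattices over the Dedekind domain $\bm{Z}[\zeta_p]$, a splitting $M_1\simeq\bm{Z}[\zeta_p]^{r-1}\oplus\c{B}$ breaks $M_1$ into rank-one pieces each contributing a one-dimensional piece of $M_1/PM_1$. The gluing is therefore a bipartite matching of one-dimensional spaces over a field and splits into three elementary blocks: an unglued $\bm{Z}$-piece, giving the trivial lattice $\bm{Z}$; an unglued rank-one $\bm{Z}[\zeta_p]$-piece, which up to isomorphism is a single fractional ideal $\c{B}$ determined by its ideal class, giving $\c{B}$; and a block matching one $\bm{Z}$ to one $\c{B}$ along a one-dimensional identification, which yields precisely a non-split extension $0\to\c{B}\to W_{\c{B}}\to\bm{Z}\to 0$. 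A short computation gives $\fn{Ext}^1_{\bm{Z}[S]}(\bm{Z},\c{B})=H^1(S,\c{B})=\c{B}/P\c{B}\simeq\bm{F}_p$, and the cyclotomic units $\tfrac{1-\zeta_p^j}{1-\zeta_p}\equiv j\pmod P$ (as in Lemma \ref{lemcirc}) show $\fn{Aut}_{\bm{Z}[S]}(\c{B})=\bm{Z}[\zeta_p]^\times$ surjects onto $\bm{F}_p^\times$; hence all $p-1$ non-split classes give isomorphic $W_{\c{B}}$, so $W_{\c{B}}$ depends only on the class of $\c{B}$.

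Finally I would confirm completeness and count. The three families $\bm{Z}$, $\c{B}$, $W_{\c{B}}$ are each indecomposable and pairwise non-isomorphic (they are distinguished by their rational characters together with the invariants of $M_0$ and $M_1$), and the block decomposition shows they exhaust the indecomposables; tallying one $\bm{Z}$, the $h_p$ ideals $\c{B}$, and the $h_p$ extensions $W_{\c{B}}$ gives $2h_p+1$. The step I expect to be the main obstacle is the decomposition itself: proving rigorously that the patching datum splits into exactly these elementary blocks and no others, which is a normal-form argument for a pair of subspace embeddings into finite-dimensional $\bm{F}_p$-spaces, coupled to the Dedekind structure on the $\bm{Z}[\zeta_p]$-side. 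I would aim only for the existence of such a decomposition into indecomposables of the listed types, not for a Krull--Schmidt uniqueness of multiplicities --- indeed the excerpt emphasizes (Theorem \ref{t3.4}, Theorem \ref{t3.5}) that integral Krull--Schmidt fails.
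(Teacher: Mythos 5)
The paper never proves this statement: Theorem \ref{t5.1} is imported verbatim from Diederichsen and Reiner (via \cite[Theorem (34.31)]{CR} and Swan's lecture notes), and the only piece of it the paper reworks internally is the invariant-theoretic consequence in Lemma \ref{l5.3}, which is built on exactly the conductor square you write down. So there is no in-paper argument to compare against; what you propose is, in outline, the classical proof from the cited sources (Milnor patching over the pullback of $\bm{Z}[S]\to\bm{Z}[\zeta_p]$ and $\bm{Z}[S]\to\bm{Z}$ over $\bm{F}_p$, Steinitz theory on the cyclotomic side, and a unit computation to collapse the non-split extension classes). Your identifications $\fn{Ext}^1_{\bm{Z}[S]}(\bm{Z},\c{B})\simeq H^1(S,\c{B})\simeq\c{B}/P\c{B}\simeq\bm{F}_p$ and the surjectivity of $\bm{Z}[\zeta_p]^\times\to(\bm{Z}[\zeta_p]/P)^\times$ via $(1-\zeta_p^j)/(1-\zeta_p)\equiv j\pmod P$ are both correct, as is the count $1+h_p+h_p=2h_p+1$.

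The one genuine gap is the one you flag yourself: the normal-form argument showing that the patching datum splits into the three elementary blocks. To close it you need two facts. First, on the $\bm{Z}$-side, a surjection $\bm{Z}^a\to T\simeq\bm{F}_p^d$ can be put in coordinate form by a change of basis in $GL_a(\bm{Z})$ (elementary divisors). Second, and this is where the real work sits, on the cyclotomic side a surjection $L\to T$ from a $\bm{Z}[\zeta_p]$-lattice $L$ of rank $r\ge d$ factors through $L/PL\simeq\bm{F}_p^r$ (because $\sigma-1$ kills $T$ and acts on $L$ as the generator $\zeta_p-1$ of $P$, using total ramification of $p$), and one must produce a decomposition $L=\bigoplus_i L_i$ into rank-one summands each mapping onto a coordinate line of $T$ or to zero; this requires the refinement of the Steinitz theorem that lets one choose the rank-one summands subject to finitely many local conditions at $P$. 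That is precisely the step Reiner's paper is devoted to, and asserting ``bipartite matching'' does not yet deliver it. You also still owe the routine verifications that $\bm{Z}$, $\c{B}$, $W_{\c{B}}$ are indecomposable and pairwise non-isomorphic --- for instance, $\{x\in W_{\c{B}}:\Phi_p(\sigma)x=0\}\simeq\c{B}$ recovers the ideal class from $W_{\c{B}}$, and indecomposability of $W_{\c{B}}$ is what rules out $W_{\c{B}}\simeq\c{B}\oplus\bm{Z}$. Your closing caveat is exactly right: you need only existence of a decomposition into blocks of the listed types, not uniqueness of multiplicities, which is consistent with the paper's emphasis (Theorem \ref{t3.4}, Theorem \ref{t3.5}) that Krull--Schmidt fails for integral representations.
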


The $S$-lattices $W_{\c{B}}$ are rank-one projective modules over $\bm{Z}[S]$.

\begin{defn} \label{d5.2}
Let $R=\bm{Z}[\zeta_p]$ and $C(R)$ be the ideal class group of $R$
(written multiplicatively). For any $S$-lattice $N$, we define the
Steinitz class of $N$, denoted by $cl(N)$, by $cl(\bm{Z})=[R]$,
the equivalence class containing the principal ideal $R$,
$cl(\c{B})=[\c{B}]\in C(R)$, $cl(W_{\c{B}})=[\c{B}]\in C(R)$.
Furthermore, it satisfies the condition: If $0\to N'\to N\to N''
\to 0$ is an exact sequence of $S$-lattices, then
$cl(N)=cl(N')\cdot cl(N'')$ in $C(R)$ (see \cite[page 73; CR, page
729]{Sw1}). For any $S$-lattice $N$, its Steinitz class $cl(N)$ is
uniquely determined by $N$ \cite[page 75]{Sw1}.
\end{defn}

\begin{lemma}[{\cite[pages 78--80]{Sw1}}] \label{l5.3}
Let $S=\langle\sigma\rangle \simeq C_p$ where $p$ is a prime number,
$\Phi_p(X)=1+X+X^2+\cdots+X^{p-1}\in \bm{Z}[X]$ be the $p$-th cyclotomic polynomial.
For any $S$-lattice $N$, define $N_0=\{x\in N: \sigma(x)=x\}$, $N_1=\{x\in N:\Phi_p(\sigma)\cdot x=0\}$.
Then $N/N_0$ may be regarded as a module over $\bm{Z}[S]/\Phi_p(\sigma) \simeq \bm{Z}[\zeta_p]$.
Thus, as a module over $\bm{Z}[\zeta_p]$, $N/N_0\simeq \bigoplus_{1\le i\le t} I_i$ where each $I_i$ is an ideal in $\bm{Z}[\zeta_p]$.
The Steinitz class $cl(N)$ is equal to $[I_1\cdot I_2\cdot \cdots \cdot I_t] \in C(\bm{Z}[\zeta_p])$;
it is also equal to $cl(N_1)$.
\end{lemma}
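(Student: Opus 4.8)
The plan is to compute $cl(N)$ by feeding two natural short exact sequences attached to $N$ into the additivity property of the Steinitz class (Definition \ref{d5.2}), thereby reducing the statement to the classification of indecomposable $S$-lattices in Theorem \ref{t5.1} together with the single observation that a trivial lattice has Steinitz class $[R]$, the identity element of $C(R)$. The whole argument rests on the relation $(\sigma-1)\Phi_p(\sigma)=\sigma^p-1=0$ holding on $N$. First I would extract from it the two inclusions: for every $x\in N$ we have $\Phi_p(\sigma)x\in N_0$ and $(\sigma-1)x\in N_1$. The first says that $\Phi_p(\sigma)$ annihilates $N/N_0$, so that $N/N_0$ is indeed a module over $\bm{Z}[S]/\Phi_p(\sigma)\simeq\bm{Z}[\zeta_p]$; the second says that $\sigma$ acts trivially on $N/N_1$. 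A one-line purity check — if $mx$ lies in $N_0$ (resp.\ $N_1$) for some nonzero integer $m$, then $x$ does, since $N$ is $\bm{Z}$-torsion-free — shows that both $N/N_0$ and $N/N_1$ are torsion-free, hence genuine $S$-lattices, so that additivity of $cl$ is legitimately available for the sequences below.

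For the first assertion I would apply additivity to $0\to N_0\to N\to N/N_0\to 0$, which gives $cl(N)=cl(N_0)\cdot cl(N/N_0)$ in $C(R)$. Since $\sigma$ acts trivially on $N_0=N^S$, the lattice $N_0$ is a direct sum of copies of $\bm{Z}$, whence $cl(N_0)=[R]$. Regarding $N/N_0\simeq\bigoplus_{1\le i\le t} I_i$ as an $S$-lattice via $\sigma\mapsto\zeta_p$, each summand $I_i$ is one of the indecomposable lattices $\c{B}$ of Theorem \ref{t5.1}, for which $cl(I_i)=[I_i]$; additivity over the direct sum then yields $cl(N/N_0)=[I_1\cdot I_2\cdots I_t]$. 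Combining, $cl(N)=[I_1\cdot I_2\cdots I_t]$. (Here the decomposition $\bigoplus_i I_i$ is not unique, but the product class $[I_1\cdots I_t]$ is exactly the well-defined Steinitz invariant of a finitely generated torsion-free module over the Dedekind domain $\bm{Z}[\zeta_p]$, so the formula is unambiguous.)

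For the second assertion I would instead use $0\to N_1\to N\to N/N_1\to 0$. Because $\sigma$ acts trivially on $N/N_1$, this quotient is again a direct sum of copies of $\bm{Z}$, so $cl(N/N_1)=[R]$, and additivity gives $cl(N)=cl(N_1)\cdot[R]=cl(N_1)$.

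The computations here are light, so there is no serious obstacle; the only points demanding genuine care are the two submodule inclusions and the torsion-freeness of the two quotients, which together guarantee that every sequence in play is a sequence of lattices to which Definition \ref{d5.2} applies. The one fact I would invoke without reproving is the input from \cite[page 75]{Sw1} already cited in Definition \ref{d5.2}, namely that $cl$ is well-defined and additive on all $S$-lattices, not merely on the indecomposables; granted that, both equalities are formal consequences of Theorem \ref{t5.1}.
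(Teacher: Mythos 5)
Your proof is correct, and for the second equality ($cl(N)=cl(N_1)$) it is word-for-word the paper's argument: additivity applied to $0\to N_1\to N\to N/N_1\to 0$ together with $N/N_1\simeq \bm{Z}^{(m)}$. For the first equality the two routes differ slightly. The paper invokes the fibre-product description of $\bm{Z}[S]$ (pulling $N$ back from $N/N_0$ and $N/N_1$ over $N/(N_0+N_1)$) and then simply cites Swan's result that $cl(N)$ is determined by $N/N_0$ written in the normalized form $I_1\oplus R^{(t-1)}$; you instead derive the formula directly by applying the additivity axiom of Definition \ref{d5.2} to $0\to N_0\to N\to N/N_0\to 0$, using $cl(N_0)=[R]$ (trivial $\sigma$-action) and $cl(I_i)=[I_i]$ from Theorem \ref{t5.1}. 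Your version is the more self-contained of the two, resting only on the well-definedness and additivity of $cl$ already cited in Definition \ref{d5.2}, and your two preparatory checks --- that $\Phi_p(\sigma)$ kills $N/N_0$ and $\sigma-1$ kills $N/N_1$, and that both quotients are torsion-free so the sequences are sequences of lattices --- are exactly the points that need to be verified for either argument to go through. The only thing you lose relative to the paper is the pull-back picture, which is not needed for the stated conclusion but is the structural reason (in Swan's treatment) why $N/N_0$ alone already determines $cl(N)$.
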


\begin{proof}
Following the presentation of \cite[page 78]{Sw1}, we get the fibre product diagram
\[
\xymatrix{\bm{Z}[S] \ar[r] \ar[d] & \bm{Z}[S]/\langle\Phi_p(\sigma)\rangle\simeq \bm{Z}[\zeta_p] \ar[d] \\
\bm{Z}\simeq \bm{Z}[S]/\langle\sigma-1\rangle \ar[r] & \bm{Z}/p\bm{Z} }
\]

For any $S$-lattice $N$, $N/N_0$ is a lattice over
$\bm{Z}[S]/\Phi_p(\sigma)$, $N/N_1$ is a lattice over
$\bm{Z}[S]/\langle\sigma-1\rangle$. Moreover, we get the following
diagram
\[
\xymatrix{N \ar[r] \ar[d] & N/N_0 \ar[d] \\ N/N_1 \ar[r] & N/N_0+N_1}
\]
It follows that $N$ is isomorphic to the pull-back of $N/N_0$ and
$N/N_1$ along $N/\langle N_0+N_1 \rangle$. The Steinitz class
$cl(N)$ is uniquely determined by $N/N_0$ (see \cite[page
79]{Sw1}). In \cite[page 79]{Sw1}, $N/N_0$ is written as a
normalized form $\bigoplus_{1\le i\le t} I_i$ where $I_2\simeq
I_3\simeq \cdots \simeq I_t\simeq \bm{Z}[\zeta_p]$ and $I_1\simeq
\c{B}$ is a non-zero ideal of $\bm{Z}[\zeta_p]$.

The formula $cl(N)=cl(N_1)$ follows from the exact sequence $0\to N_1\to N\to N/N_1\to 0$ and $cl(N)=cl(N_1)\cdot cl(N/N_1)$ (see Definition \ref{d5.2}),
because $N/N_1$ is a lattice over $\bm{Z}[S]/\langle\sigma-1\rangle \simeq \bm{Z}$ and thus $N/N_1\simeq \bm{Z}^{(m)}$ for some integer $m$.
\end{proof}

\begin{theorem} \label{t5.4}
Let $G=\langle\sigma,\tau:\sigma^p=\tau^2=1,\tau\sigma\tau^{-1}=\sigma^{-1}\rangle\simeq D_p$ where $p$ is an odd prime number.
Assume that $h_p^+\ge 2$.
For any non-principal ideal $\c{A}$ of $\bm{Z}[\zeta_p+\zeta_p^{-1}]$,
let $M$ be one of the $G$-lattices $V_{\c{A}}$, $(Y_0)_{\c{A}}$, $(Y_1)_{\c{A}}$ or $(Y_2)_{\c{A}}$ in Theorem \ref{t4.1}.
Then $[M]^{fl}$ is not permutation.
\end{theorem}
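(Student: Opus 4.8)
The plan is to argue by contradiction, using the Steinitz class as a stable invariant after restriction to the cyclic subgroup $S=\langle\sigma\rangle\simeq C_p$. Suppose $[M]^{fl}$ were permutation. By Theorem \ref{t2.6}(1) there would then be a short exact sequence of $G$-lattices $0\to M\to P_1\to P_2\to 0$ with $P_1,P_2$ permutation. Restricting to $S$ keeps the sequence exact and turns $P_1,P_2$ into permutation $S$-lattices (the restriction of any permutation lattice is a permutation lattice). Applying the Steinitz class $cl(\,\cdot\,)\in C(R)$ of Definition \ref{d5.2}, which is multiplicative on short exact sequences of $S$-lattices, gives $cl(P_1)=cl(M)\cdot cl(P_2)$. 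Thus everything comes down to two computations: that every permutation $S$-lattice has trivial Steinitz class, and that $cl(M)\ne 1$.

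For the first, note that $S\simeq C_p$ has only the subgroups $1$ and $S$, so a permutation $S$-lattice is a direct sum of copies of $\bm{Z}$ and $\bm{Z}[S]$. Here $cl(\bm{Z})=[R]$ is trivial by Definition \ref{d5.2}, and $\bm{Z}[S]/(\bm{Z}[S])_0\simeq R$ as an $R$-module, so Lemma \ref{l5.3} gives $cl(\bm{Z}[S])=[R]$ as well. Consequently $cl(P_1)=cl(P_2)=1$ and hence $cl(M)=1$. To contradict this I will compute $cl(M)$ directly from the defining sequences of Theorem \ref{t4.1}, all lattices being viewed as $S$-lattices by restriction. The relevant building blocks are: $cl(\bm{Z})=cl(\bm{Z}_-)=cl(\bm{Z}[H])=1$ because $\sigma$ acts trivially on each; $cl(P)=[P]=1$ because $P=\langle 1-\zeta_p\rangle$ is principal in $R$; and, by Lemma \ref{l5.3} (the $\sigma$-fixed submodule being zero), $cl(R\c{A})=[R\c{A}]$ and $cl(P\c{A})=[P\c{A}]=[R\c{A}]$, the last equality again because $P$ is principal.

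Feeding these into the exact sequences of Theorem \ref{t4.1} and using multiplicativity of $cl$ yields, in every case, $cl(M)=[R\c{A}]$: from $0\to P\c{A}\to V_{\c{A}}\to \bm{Z}\to 0$ one gets $cl(V_{\c{A}})=[R\c{A}]$; from $0\to R\c{A}\to (Y_0)_{\c{A}}\to \bm{Z}[H]\to 0$ and $0\to P\c{A}\to (Y_1)_{\c{A}}\to \bm{Z}[H]\to 0$ one gets $cl((Y_0)_{\c{A}})=cl((Y_1)_{\c{A}})=[R\c{A}]$; and from $0\to R\c{A}\oplus P\to (Y_2)_{\c{A}}\to \bm{Z}[H]\to 0$ one gets $cl((Y_2)_{\c{A}})=[R\c{A}]\cdot[P]=[R\c{A}]$.

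It remains to see that $[R\c{A}]\ne 1$ in $C(R)$. This is where the one genuinely arithmetic input enters, and it is the main obstacle: the natural map $C(R_0)\to C(R)$ sending $[\c{A}]$ to $[R\c{A}]$ is injective, a classical fact for the CM extension $\bm{Q}(\zeta_p)/\bm{Q}(\zeta_p+\zeta_p^{-1})$ that also underlies the divisibility $h_p^+\mid h_p$ (see \cite{Wa}). Since $\c{A}$ is non-principal in $R_0$, its image $[R\c{A}]$ is non-trivial, so $cl(M)=[R\c{A}]\ne 1$, contradicting $cl(M)=1$. Hence $[M]^{fl}$ is not permutation. Once this injectivity is granted, the remainder is bookkeeping built on the multiplicativity of the Steinitz class and the principality of $P$ in $R$, which collapses the classes of $P\c{A}$ and $P$ to those of $R\c{A}$ and the trivial class respectively.
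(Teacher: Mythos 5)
Your proof is correct and follows essentially the same route as the paper: restrict to $S=\langle\sigma\rangle$, show permutation $S$-lattices have trivial Steinitz class, compute $cl(M)=[R\mathcal{A}]$ from the defining extensions, and conclude via the injectivity of $C(\bm{Z}[\zeta_p+\zeta_p^{-1}])\to C(\bm{Z}[\zeta_p])$. The only (harmless) difference is that you invoke the two-term permutation resolution $0\to M\to P_1\to P_2\to 0$ from Theorem \ref{t2.6}(1) directly, whereas the paper first passes through a flabby resolution $0\to M\to Q\to N\to 0$ using the invertibility of $M$ and then argues that $N$ cannot be stably permutation.
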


\begin{proof}
Step 1. By Proposition \ref{p4.8} $M$ is flabby. Hence it is
invertible by Theorem \ref{t2.7}. Choose a $G$-lattice $M$ such
that $M\oplus N$ is a permutation $G$-lattice. Write $Q:=M\oplus
N$. It follows that $0\to M\to Q\to N\to 0$ is a flabby resolution
of $M$ and $[M]^{fl}=[N]$. We will show that $N$ is not stably
permutation.

Suppose not.
There is a permutation $G$-lattice $Q_1$ such that $N\oplus Q_1$ is a permutation $G$-lattice.
Write $Q_2:= N\oplus Q_1$.
Then $0\to N\to Q_2 \to Q_1\to 0$ is exact.

Let $S=\langle\sigma\rangle$ be the subgroup of $G$. By
restricting to the subgroup $S$, we may regard the exact sequences
of $G$-lattices $0\to M\to Q\to N\to 0$, $0\to N\to Q_2\to Q_1\to
0$ as exact sequences of $S$-lattices. We will find a
contradiction by evaluating the Steinitz classes of these
$S$-lattices.

\bigskip
Step 2.
Recall $R=\bm{Z}[\zeta_p]$.
If $Q_0$ is a permutation $S$-lattices, we will show that $cl(Q_0)=[R]$.

Since $|S|=p$ is a prime number, any permutation $S$-lattice is a direct sum of $\bm{Z}$ and $\bm{Z}[S]$.
In particular, $Q_0=\bm{Z}^{(s)} \oplus (\bm{Z}[S])^{(t)}$ for some non-negative integers $s$ and $t$.
Note that $cl(\bm{Z})=[R]$.
We will show that $cl(\bm{Z}[S])=[R]$ also.

Write $L:=\bm{Z}[S]$.
Define $L_0=\{x\in \bm{Z}[S]: (\sigma-1)\cdot x=0\}$.
It is easy to see that $L_0=\langle\Phi_p(\sigma)\rangle$ the ideal generated by $\Phi_p(\sigma)$.
Thus $L/L_0=\bm{Z}[S]/\Phi_p(\sigma)\simeq R$.
By Lemma \ref{l5.3} $cl(L)=[R]$. Done.

\bigskip
Step 3. From the exact sequence $0\to N\to Q_2\to Q_1\to 0$, we
find that $cl(N)=[R]$ because $[R]=cl(Q_2)=cl(N)\cdot
cl(Q_1)=cl(N)\cdot [R]$.

On the other hand, from the exact sequence $0\to M\to Q\to N\to
0$, we have $[R]=cl(Q)=cl(M)\cdot cl(N)=cl(M)\cdot [R]$. Thus
$cl(M)=[R]$. We will show that $cl(M)=[R]$ is impossible. Thus a
contradiction is obtained.

\medskip
Step 4.
Recall that $M=V_{\c{A}}$, $(Y_0)_{\c{A}}$, $(Y_1)_{\c{A}}$, $(Y_2)_{\c{A}}$.
We will consider the case $M=(Y_1)_{\c{A}}$;
the other cases  may be proved similarly.

By Theorem \ref{t4.1}, we have an exact sequence of $G$-lattices
$0\to P\c{A} \to (Y_1)_{\c{A}}\to \bm{Z}_-\to 0$. Regard it as an
exact sequence of $S$-lattices by restriction. When the action of
$\tau$ is forgotten, then $P\simeq R$, $\bm{Z}_-\simeq \bm{Z}$ as
$S$-lattices. Hence, as $S$-lattices, we have $0\to R\c{A}\to
(Y_1)_{\c{A}} \to \bm{Z}\to 0$ and
$cl((Y_1)_{\c{A}})=cl(R\c{A})\cdot cl(\bm{Z})=[R\c{A}]$.

By \cite[page 40, Theorem 4.14]{Wa}, the natural map of the class
group of $\bm{Z}[\zeta_p+\zeta_p^{-1}]$ to that of
$R=\bm{Z}[\zeta_p]$ is injective. Since we choose $\c{A}$ to be a
non-principal ideal of $\bm{Z}[\zeta_p+\zeta_p^{-1}]$, it follows
that $R\c{A}$ is a non-principal ideal of $R$. Thus
$cl(M)=cl((Y_1)_{\c{A}})=[R\c{A}]\ne [R]$ as we claimed before.
\end{proof}

\begin{proof}[Proof of Theorem \ref{t1.4}] ~

If $h_p^+=1$, then all the $D_p$-tori are stably rational by Theorem \ref{t4.9}.

If $h_p^+\ge 2$ let $G=D_p$, choose a $G$-lattice $M$ such that
$[M]^{fl}$ is not permutation by Theorem \ref{t5.4}. Let $T$ be
the $G$-torus defined over $k$ with character module $M$. By
Theorem \ref{t2.6} $T$ is not stably $k$-rational (but $T$ is
retract $k$-rational by Proposition \ref{p3.7}).
\end{proof}

\section{Some related rationality problems}

By Theorem \ref{t4.9}, if $h_p^+=1$, $M$ is any $D_p$-lattice and
$K/k$ is a Galois extension with $Gal(K/k) \simeq D_p$, then
$K(M)^{D_p}$ is stably $k$-rational. In this section we will
estimate the number of variables $m$ (which depends on $M$ and its
decomposition; see Lemma \ref{l6.4}) such that
$K(M)^{D_p}(x_1,\ldots,x_m)$ is $k$-rational. The key idea of our
method is the notion of anisotropic lattices exploited by
Voskresenskii and his school (see \cite{Ku1,Ku2}). Before the
proof, we recall two known rationality criteria.

\begin{theorem}[{\cite[Theorem 2.1]{Ka1}}] \label{t6.1}
Let $L$ be a field and $G$ be a finite group acting on $L(x_1,\ldots,x_m)$,
the rational function field of $m$ variables over $L$.
Suppose that

{\rm (i)}
for any $\sigma \in G$, $\sigma(L)\subset L$;

{\rm (ii)}
the restriction of the action of $G$ to $L$ is faithful;

{\rm (iii)}
for any $\sigma\in G$,
\[
\begin{pmatrix} \sigma(x_1) \\ \vdots \\ \sigma(x_m) \end{pmatrix}
=A(\sigma)\begin{pmatrix} x_1 \\ \vdots \\ x_m
\end{pmatrix}+B(\sigma)
\]
where $A(\sigma)\in GL_m(L)$ and $B(\sigma)$ is an $m\times 1$ matrix over $L$.
Then $L(x_1,\ldots,x_m)=L(z_1,\ldots,z_m)$ where $\sigma(z_i)=z_i$ for any $\sigma\in G$, any $1\le i\le m$.
In particular, $L(x_1,\ldots,x_m)^G=L^G(z_1,\ldots,z_m)$.
\end{theorem}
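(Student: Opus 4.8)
The plan is to linearize the affine $G$-action by Galois descent. First I would set $k=L^G$. Since $G$ is finite and, by hypotheses (i) and (ii), acts faithfully on $L$ by field automorphisms, Artin's theorem shows that $L/k$ is a finite Galois extension with $\fn{Gal}(L/k)\simeq G$. This is the only place where (i) and (ii) are used, and it converts the abstract faithful action into an honest Galois situation to which descent applies.

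Next I would pass from the function field to a finite-dimensional semilinear representation. Let $W=L\cdot 1+\sum_{i=1}^m L\,x_i$ be the $L$-span of $1,x_1,\dots,x_m$ inside $L(x_1,\dots,x_m)$; since the $x_i$ are algebraically independent, $\dim_L W=m+1$. Condition (iii) together with $\sigma(1)=1$ shows that $W$ is $G$-stable and that $G$ acts on $W$ semilinearly over $L$, i.e. $\sigma(cw)=\sigma(c)\,\sigma(w)$ for $c\in L$, $w\in W$. Concretely, the whole content of (iii) is repackaged as the statement that, in the ordered $L$-basis $1,x_1,\dots,x_m$, each $\sigma$ acts by the invertible matrix $\left(\begin{smallmatrix}1 & 0\\ B(\sigma) & A(\sigma)\end{smallmatrix}\right)$, twisted by the Galois action on the entries.

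The heart of the argument, and the step I expect to be the main obstacle, is the generalized Hilbert 90 (Speiser's theorem): for a finite Galois extension $L/k$ with group $G$ and any semilinear $G$-module $W$ over $L$, the fixed set $W^G$ is a $k$-vector space and the natural map $L\otimes_k W^G\to W$ is an isomorphism; equivalently $H^1(G,GL_{m+1}(L))=1$. Granting this, $\dim_k W^G=\dim_L W=m+1$ and $W^G$ contains an $L$-basis of $W$. Because $1\in W^G$, I would extend it to a $k$-basis $1,z_1,\dots,z_m$ of $W^G$; by descent this is simultaneously an $L$-basis of $W$, and by construction $\sigma(z_i)=z_i$ for all $\sigma\in G$ and $1\le i\le m$.

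Finally I would read off the conclusion by elementary means. Since $\{1,x_1,\dots,x_m\}$ and $\{1,z_1,\dots,z_m\}$ are two $L$-bases of $W$, they are related by an invertible affine change of coordinates over $L$, so $L[x_1,\dots,x_m]=L[z_1,\dots,z_m]$ and hence $L(x_1,\dots,x_m)=L(z_1,\dots,z_m)$; comparing transcendence degrees over $L$ forces $z_1,\dots,z_m$ to be algebraically independent, so the right-hand side is genuinely a rational function field in $m$ variables. For the last assertion, the $z_i$ are $G$-fixed and algebraically independent over $L$, so $G$ acts on $L(z_1,\dots,z_m)$ only through its action on the coefficient field $L$; by the standard fact that $L(z_1,\dots,z_m)/L^G(z_1,\dots,z_m)$ is then Galois with group $G$, its fixed field is $L^G(z_1,\dots,z_m)$, which gives $L(x_1,\dots,x_m)^G=L(z_1,\dots,z_m)^G=L^G(z_1,\dots,z_m)$.
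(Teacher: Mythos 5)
Your argument is correct: passing to the $(m+1)$-dimensional semilinear $G$-module $W=L\cdot 1+\sum_i L\,x_i$, invoking Speiser's generalized Hilbert 90 to descend a basis containing $1$, and then identifying the fixed field via linear disjointness is exactly the standard proof of this linearization lemma. The paper itself gives no proof of Theorem \ref{t6.1} (it is quoted from \cite[Theorem 2.1]{Ka1}), and your route coincides with the argument in that cited source, so there is nothing to add.
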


\begin{prop} \label{p6.2}
Let $G$ be a finite group, $M$ be a $G$-lattice. Let $k'/k$ be a
finite Galois extension such that there is a surjection $G\to
\fn{Gal}(k'/k)$. Suppose that there is an exact sequence of
$G$-lattices $0\to M_0\to M\to Q\to 0$ where $Q$ is a permutation
$G$-lattice. If $G$ is faithful on the field $k'(M_0)$, then
$k'(M)=k'(M_0)(x_1,\ldots,x_m)$ for some elements
$x_1,x_2,\ldots,x_m$ satisfying $m=\fn{rank}_{\bm{Z}} Q$,
$\sigma(x_j)=x_j$ for any $\sigma\in G$, any $1\le j\le m$.
\end{prop}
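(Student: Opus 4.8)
The plan is to choose a $\bm{Z}$-basis of $M$ adapted to the exact sequence and to recognize the $G$-action on the resulting ``new'' variables as an $L$-linear (indeed monomial) action over the field $L := k'(M_0)$, so that Theorem \ref{t6.1} applies directly. First I would fix a $\bm{Z}$-basis $\{f_1,\dots,f_m\}$ of $Q$ permuted by $G$, which exists because $Q$ is a permutation lattice, with $m=\fn{rank}_{\bm{Z}}Q$; write $\sigma\cdot f_j=f_{\pi_\sigma(j)}$ for the associated permutations $\pi_\sigma$. Since $Q$ is free, the sequence $0\to M_0\to M\to Q\to 0$ splits as abelian groups, so I can lift each $f_j$ to some $\tilde f_j\in M$; together with a $\bm{Z}$-basis $e_1,\dots,e_r$ of $M_0$ these give a $\bm{Z}$-basis of $M$. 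Under Definitions \ref{d2.4} and \ref{d2.5} this identifies $k'(M)=L(X_1,\dots,X_m)$, where $L=k'(M_0)=k'(y_1,\dots,y_r)$ is the subfield generated by the variables $y_i$ attached to the $e_i$, and $X_j$ is the variable attached to $\tilde f_j$.

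Next I would compute the action on $X_1,\dots,X_m$. Because $M_0$ is $G$-stable, each $\sigma$ sends $L$ into $L$, so hypothesis (i) of Theorem \ref{t6.1} holds, and the restricted action on $L$ is exactly the purely quasi-monomial action attached to $M_0$; its faithfulness is precisely the standing hypothesis, giving (ii). For the $X_j$, since $\tilde f_j$ maps to $f_j$ in $Q$, the element $\sigma\cdot\tilde f_j-\tilde f_{\pi_\sigma(j)}$ lies in $M_0$, say $\sigma\cdot\tilde f_j=\tilde f_{\pi_\sigma(j)}+\sum_i c_{ij}e_i$ with $c_{ij}\in\bm{Z}$. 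Translating to the function field gives $\sigma(X_j)=\big(\prod_i y_i^{c_{ij}}\big)\,X_{\pi_\sigma(j)}$, with $\prod_i y_i^{c_{ij}}\in L^\times$.

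The crucial observation --- and the only real subtlety --- is that although this action is monomial, each $X_j$ is sent to a scalar (in $L^\times$) times a \emph{single} permuted variable $X_{\pi_\sigma(j)}$; hence the map $\sigma$ on the $L$-space $\bigoplus_j L\cdot X_j$ is $L$-linear with matrix $A(\sigma)\in GL_m(L)$ a generalized permutation matrix (a permutation matrix whose nonzero entries are the units $\prod_i y_i^{c_{ij}}$), and $B(\sigma)=0$. This is exactly condition (iii) of Theorem \ref{t6.1}. Having verified (i)--(iii), I would apply Theorem \ref{t6.1} to conclude $L(X_1,\dots,X_m)=L(z_1,\dots,z_m)$ with $\sigma(z_j)=z_j$ for all $\sigma\in G$ and all $j$; setting $x_j=z_j$ yields $k'(M)=k'(M_0)(x_1,\dots,x_m)$ with $m=\fn{rank}_{\bm{Z}}Q$ and the $x_j$ fixed by $G$, as required. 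The main thing to get right is the passage from the monomial description to the linear hypothesis (iii): one must check that $A(\sigma)$ is genuinely invertible over $L$ (immediate for a generalized permutation matrix) and that $\sigma\mapsto A(\sigma)$ is consistent with the group action inherited from $M$, which is automatic.
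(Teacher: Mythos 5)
Your proposal is correct and follows essentially the same route as the paper: lift a permuted $\bm{Z}$-basis of $Q$ to $M$, observe that each lifted variable is sent to an element of $k'(M_0)^\times$ times a single permuted variable (so the action is $k'(M_0)$-linear via a generalized permutation matrix), and invoke Theorem \ref{t6.1} using the faithfulness hypothesis. The paper's proof is a terser version of exactly this argument.
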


\begin{proof}
Note that the action of $G$ on $k'(M)$ is the purely
quasi-monomial action in Definition \ref{d2.3}.

Write $M_0=\bigoplus_{1\le i\le n} \bm{Z}\cdot u_i$,
$Q=\bigoplus_{1\le j\le m} \bm{Z}\cdot v_j$. Choose elements
$w_1,\ldots,w_m\in M$ such that $w_j$ is a preimage of $v_j$ for
$1\le j\le m$. It follows that $\{u_1,\ldots,
u_n,w_1,\ldots,w_m\}$ is a $\bm{Z}$-basis of $M$.

For each $\sigma\in G$, since $Q$ is permutation,
$\sigma(w_j)-w_l\in M_0$ for some $w_l$ (depending on $j$). In the
field $k'(M)$, if we write
$k'(M)=k(u_1,\ldots,u_n,w_1,\ldots,w_m)$ as the rational function
field in $m+n$ variables over $k'$, then
$\sigma(w_j)=\alpha_j(\sigma)w_l$ for some $\alpha_j(\sigma)\in
k'(M_0)$.

Since $G$ is faithful on $k'(M_0)$, apply Theorem \ref{t6.1}.
\end{proof}

\begin{defn}[Kunyavskii \cite{Ku1}] \label{d6.3}
Let $G$ be a finite group, $M$ be a $G$-lattice. $M$ is called an
anisotropic lattice if $M^G=0$ where $M^G:=\{x\in M:\sigma \cdot
x=x ~\forall \sigma\in G\}$. For a $G$-lattice $M$, define $M_0
:=\{x\in M:(\sum_{\sigma\in G}\sigma)\cdot x=0\}$. Then $M_0$ is
an anisotropic sublattice of $M$. Moreover, $(M/M_0)^G=M/M_0$
(for, if $\bar{x}\in M/M_0$ and $\sigma\in G$, then
$(\sigma-1)\cdot \bar{x}=0$, because $(\sigma-1)\cdot x\in M_0$).
\end{defn}

\begin{lemma} \label{l6.4}
Let
$G=\langle\sigma,\tau:\sigma^p=\tau^2=1,\tau\sigma\tau^{-1}=\sigma^{-1}\rangle
\simeq D_p$ where $p$ is an odd prime number. Let $M$ be any
$G$-lattice and $M_0=\{x\in M: (\sum_{\sigma\in G} g)\cdot x=0\}$.
If $h_p^+=1$, then $M_0\simeq X^{(s_0)} \oplus R^{(s_1)} \oplus
P^{(s_2)} \oplus \bm{Z}_-^{(t)}$ for some non-negative integers
$s_0$, $s_1$, $s_2$, $t$, which may not be uniquely determined by
the lattice $M_0$.
\end{lemma}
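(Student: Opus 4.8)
The plan is to use that the norm element $N=\sum_{g\in G}g$ is central in $\bm{Z}[G]$, so that multiplication by $N$ is an endomorphism of $\bm{Z}[G]$-modules. Consequently $M_0=\{x\in M:N\cdot x=0\}$ is a $G$-sublattice of $M$ (it is the kernel of the central element $N$, and it is $\bm{Z}$-free) on which $N$ acts as zero. First I would decompose $M_0$ into indecomposable $G$-lattices, $M_0\simeq\bigoplus_{j}L_j$; such a decomposition exists because the $\bm{Z}$-rank bounds the number of summands. Since $h_p^+=1$, Theorem~\ref{t4.9} identifies each $L_j$ with one of the ten indecomposables $\bm{Z},\bm{Z}_-,\bm{Z}[H],R,P,V,X,Y_0,Y_1,Y_2$. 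As $N$ annihilates $M_0$ it annihilates each summand $L_j$, so the lemma reduces to deciding which of these ten lattices are annihilated by $N$.

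The key step is to show that $N$ kills precisely $R,P,X,\bm{Z}_-$. Writing $N=\bigl(\sum_{0\le i\le p-1}\sigma^i\bigr)(1+\tau)$, on $R$ and on $P\subset R$ the factor $\sum_i\sigma^i$ acts as $1+\zeta_p+\cdots+\zeta_p^{p-1}=0$; on $\bm{Z}_-$ the factor $1+\tau$ acts as $0$; and on $X\simeq M_-$, in the basis $x_0,\dots,x_{p-1}$ with $\sigma\colon x_j\mapsto x_{j+1}$ and $\tau\colon x_j\mapsto -x_{-j}$, one has $\sum_i\sigma^i(x_j)=\sum_i x_i$ followed by $(1+\tau)\sum_i x_i=0$. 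For the other six lattices I would avoid explicit matrices by using the quotient maps of Theorem~\ref{t4.1}: if $\pi\colon L\twoheadrightarrow Q$ is a $G$-epimorphism then $\pi\circ N=N\circ\pi$, so $N\ne0$ on $Q$ forces $N\ne0$ on $L$. Now $N$ is multiplication by $2p$ on $\bm{Z}$, and, writing $\bm{Z}[H]=\bm{Z}t_0\oplus\bm{Z}t_1$ with $\tau\colon t_0\leftrightarrow t_1$ and $\sigma$ acting trivially, $N\cdot t_0=p(t_0+t_1)\ne0$; feeding the epimorphisms $V\twoheadrightarrow\bm{Z}$ and $Y_0,Y_1,Y_2\twoheadrightarrow\bm{Z}[H]$ into the criterion shows $N\ne0$ on $V,Y_0,Y_1,Y_2$ as well. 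Hence none of $\bm{Z},\bm{Z}[H],V,Y_0,Y_1,Y_2$ can occur as a summand $L_j$, so every $L_j\in\{X,R,P,\bm{Z}_-\}$ and $M_0\simeq X^{(s_0)}\oplus R^{(s_1)}\oplus P^{(s_2)}\oplus\bm{Z}_-^{(t)}$.

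The main obstacle is less a single hard computation than the combination of (i) verifying the norm action on all ten indecomposables, where one must be careful that multiplication by the central element $N$ genuinely respects the non-canonical decomposition $\bigoplus_j L_j$, and (ii) checking that the epimorphism criterion is legitimately applied to the extensions of Theorem~\ref{t4.1}. Finally, the clause that $s_0,s_1,s_2,t$ need not be uniquely determined is exactly the failure of the Krull--Schmidt--Azumaya theorem exhibited in Theorems~\ref{t3.4}--\ref{t3.6}: the indecomposable decomposition of $M_0$ is itself non-unique, so I would establish existence of the stated decomposition while explicitly declining to claim uniqueness of the exponents.
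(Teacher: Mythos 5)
Your proposal is correct and follows essentially the same route as the paper: decompose $M_0$ into indecomposables, invoke the classification of the ten indecomposable $D_p$-lattices under $h_p^+=1$, observe that only $\bm{Z}_-$, $R$, $P$, $X$ are killed by the norm element, and attribute the non-uniqueness of the exponents to the failure of Krull--Schmidt--Azumaya. The only difference is that you explicitly verify which indecomposables are annihilated by $N=\bigl(\sum_i\sigma^i\bigr)(1+\tau)$ (via the factorization and the epimorphism criterion), a check the paper asserts without proof; your verification is correct.
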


\begin{proof}
Note that $(\sum_{g\in G} g)\cdot M_0=0$, by definition. From
Theorem \ref{t4.9}, the only indecomposable $G$-lattices
annihilated by $\sum_{g\in G} g$ are $\bm{Z}_-$, $R$, $P$ and $X$.

Note that the Krull-Schmidt-Azumaya Theorem (see \cite[page
128]{CR}) is not valid in the category of $G$-lattices; for
examples, Theorem \ref{t3.4}, Theorem \ref{t3.5} and Theorem
\ref{t3.6} provide such counter-examples. Thus the integers $s_0$,
$s_1$, $s_2$, $t$ are not uniquely determined solely by the
lattice $M_0$.
\end{proof}

\begin{theorem} \label{t6.5}
Let the notations and assumptions be the same as in Lemma
\ref{l6.4}. Let $K/k$ be a Galois extension with $G\simeq
\fn{Gal}(K/k)$. Define $m=\fn{rank}_{\bm{Z}} M-\fn{rank}_{\bm{Z}}
M_0$, $n=s_0(p+1)+s_1(p+2)+s_2-t-m$, which may depends on the
decomposition of $M_0$ in Lemma \ref{l6.4}. If $n<0$, then
$K(M)^G$ is $k$-rational. If $n\ge 0$, then
$K(M)^G(z_1,z_2,\ldots,z_n)$ is $k$-rational where
$z_1,z_2,\ldots,z_n$ are elements algebraically independent over
$K(M)^G$.
\end{theorem}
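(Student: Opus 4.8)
The plan is first to peel off the "split" part of $M$. By Definition \ref{d6.3} the quotient $M/M_0$ is a trivial $G$-lattice, hence a permutation lattice of rank $m=\fn{rank}_{\bm{Z}}M-\fn{rank}_{\bm{Z}}M_0$, and $0\to M_0\to M\to M/M_0\to 0$ is exact. Since $K/k$ is Galois with group $G$, the action of $G$ on $K(M_0)$ is faithful (it already is on $K$), so Proposition \ref{p6.2} gives $K(M)^G=K(M_0)^G(x_1,\ldots,x_m)$ with the $x_i$ algebraically independent and $G$-fixed. Thus $K(M)^G(z_1,\ldots,z_j)=K(M_0)^G(x_1,\ldots,x_m,z_1,\ldots,z_j)$, and everything reduces to producing a number $N\le m+n=s_0(p+1)+s_1(p+2)+s_2-t$ of variables with $K(M_0)^G(w_1,\ldots,w_N)$ $k$-rational: then $j=n$ works when $n\ge 0$, while the $m$ variables already present suffice when $n<0$.

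Next I would compute $K(M_0)^G$ blockwise along the decomposition of Lemma \ref{l6.4}, accumulating variables by repeated use of Proposition \ref{p6.2}; recall that the fixed field of a permutation lattice is the function field of a quasi-split torus, hence $k$-rational of transcendence degree its rank. The key relations to establish, for any auxiliary $G$-lattice $B$ (so that $G$ stays faithful on $K(B)$), are
\begin{align*}
K(B\oplus P)^G(1)&\cong K(B)^G(p) &&\text{from } 0\to P\to \bm{Z}[G/\langle\tau\rangle]\to\bm{Z}\to 0,\\
K(B\oplus R)^G(p+2)&\cong K(B)^G(2p+1) &&\text{from } 0\to R\to \widetilde{M}_-\to\bm{Z}[G/\langle\sigma\rangle]\to 0,\\
K(B\oplus X)^G(p+1)&\cong K(B)^G(2p+1) &&\text{from } 0\to X\to\widetilde{M}_-\to\bm{Z}_-\to 0 .
\end{align*}
The relation for $P$ is immediate from Lemma \ref{l4.4} and Lemma \ref{l4.3} (which give $V\simeq M_+=\bm{Z}[G/\langle\tau\rangle]$ and $N_-\simeq P$). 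The relation for $R$ combines Lemma \ref{l4.5} and Lemma \ref{l4.3} with the stable isomorphism $\widetilde{M}_-\oplus\bm{Z}[G/\langle\tau\rangle]\simeq\bm{Z}[G]\oplus\bm{Z}$ of Theorem \ref{t3.5}: adjoining $\bm{Z}[G/\langle\tau\rangle]$ turns the middle term into a permutation lattice. For $X$ (Definition \ref{d3.3} identifies $M_-\simeq X$) the quotient $\bm{Z}_-$ is not permutation, so I would form the pullback of $\widetilde{M}_-\twoheadrightarrow\bm{Z}_-$ against the surjection $\bm{Z}[G/\langle\sigma\rangle]\twoheadrightarrow\bm{Z}_-$ (kernel the trivial lattice $\bm{Z}$); this yields $0\to X\to\widehat{Y}\to\bm{Z}[G/\langle\sigma\rangle]\to 0$ with permutation quotient, where $0\to\bm{Z}\to\widehat{Y}\to\widetilde{M}_-\to 0$ splits because $\widetilde{M}_-$ is stably permutation, so $\widehat{Y}\simeq\widetilde{M}_-\oplus\bm{Z}$ and the computation feeds into the relation for $R$.

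The summands $\bm{Z}_-$ are handled differently, and this is where the subtraction $-t$ enters. Because each $\bm{Z}_-$ is a direct summand of $M_0$, the corresponding torus is a direct factor; since every one-dimensional torus is $k$-rational (Theorem \ref{t1.1}) I expect $K(B\oplus\bm{Z}_-)^G\cong K(B)^G(\theta)$, so each copy contributes one free variable at no cost. Peeling off the $t$ copies first reduces $K(M_0)^G$ to $K(A)^G(t)$ with $A=X^{(s_0)}\oplus R^{(s_1)}\oplus P^{(s_2)}$; applying the three relations above $s_0,s_1,s_2$ times and bookkeeping the adjoined variables then shows $K(A)^G\big(s_0(p+1)+s_1(p+2)+s_2\big)$ is the fixed field of a permutation lattice, hence $k$-rational. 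Combining, $K(M_0)^G\big(s_0(p+1)+s_1(p+2)+s_2-t\big)$ is $k$-rational, i.e.\ $N=m+n$ variables suffice; feeding this back through the first paragraph settles both cases of the sign of $n$.

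The hardest points are exactly the two places where Proposition \ref{p6.2} and Theorem \ref{t6.1} do not apply verbatim. First, the anisotropic summand $\bm{Z}_-$ has $\tau$ acting by inversion $w\mapsto w^{-1}$, which is not an affine action and cannot be linearized by Theorem \ref{t6.1}; it must be absorbed either through the permutation sequence $0\to\bm{Z}_-\to\bm{Z}[G/\langle\sigma\rangle]\to\bm{Z}\to 0$ or through one-dimensional torus rationality, and extracting the \emph{sharp} count of one free variable per $\bm{Z}_-$ (hence $-t$) rather than a lossy count is the delicate step. Second, $X$ is not flabby, so it admits no permutation-to-permutation resolution; the auxiliary pullback against $\bm{Z}[G/\langle\sigma\rangle]\twoheadrightarrow\bm{Z}_-$ is precisely what reduces its fixed field to that of $R$ while saving one variable, and one must check the ranks and the faithfulness hypothesis of Proposition \ref{p6.2} at every stage so that the accumulated variable counts genuinely add up to the stated $n$.
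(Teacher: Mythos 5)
Your overall strategy is the paper's: peel off $M/M_0$ via Proposition \ref{p6.2}, absorb each $\bm{Z}_-$ summand into one free variable, and for each copy of $P$, $R$, $X$ adjoin variables along the exact sequences of Sections 3--4 until a permutation lattice is reached. Your counts for $P$ (one variable, via $0\to P\to V\simeq\bm{Z}[G/\langle\tau\rangle]\to\bm{Z}\to 0$) and for $R$ ($p+2$ variables, via $Y_0\simeq\widetilde{M}_-$ and Theorem \ref{t3.5}) agree with the paper, and your justification that $0\to\bm{Z}\to\widehat{Y}\to\widetilde{M}_-\to 0$ splits (stably permutation lattices have vanishing $H^1$) is sound. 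The treatment of the $\bm{Z}_-$ summands differs cosmetically: the paper substitutes $v=(1+y)/(1-y)$ (or $1/(1+y)$ in characteristic $2$) and linearizes by Theorem \ref{t6.1}, whereas you invoke the product structure of the torus and rationality of one-dimensional tori; both yield one free variable per copy.

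The genuine gap is in the $X$-block. You take the sequence $0\to M_-\to\widetilde{M}_-\to\bm{Z}_-\to 0$ of Definition \ref{d3.3} at face value and, since $\bm{Z}_-$ is not permutation, repair it by pulling back along $\bm{Z}[G/\langle\sigma\rangle]\twoheadrightarrow\bm{Z}_-$. That pullback has rank $p+2$, so what your construction actually delivers is $K(B\oplus X)^G(2)\cong K(B\oplus\widetilde{M}_-)^G(1)$ and hence $K(B\oplus X)^G(p+2)\cong K(B)^G(2p+2)$; the relation you state, $K(B\oplus X)^G(p+1)\cong K(B)^G(2p+1)$, would require cancelling an indeterminate from a stable isomorphism, which is not a legitimate step. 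As written, your argument therefore only shows that $n+s_0$ extra variables suffice, not $n$. The fix is that no pullback is needed: the displayed matrix $-\tilde{B}$ in Definition \ref{d3.3} (equivalently, the explicit action $\tau\colon w\mapsto w-\sum_i x_i$ in Case 2 of the proof of Lemma \ref{l4.5}) shows that $\tau$ fixes $w$ modulo $M_-$, so the quotient $\widetilde{M}_-/M_-$ is the \emph{trivial} lattice $\bm{Z}$ (the $\bm{Z}_-$ in the displayed short exact sequence of Definition \ref{d3.3} is a misprint; it is $\widetilde{M}_+/M_+$ that is isomorphic to $\bm{Z}_-$). Proposition \ref{p6.2} then applies directly, one variable gives $K(X)(y)\cong K(\widetilde{M}_-)$, and the sharp count $p+1$ per copy of $X$ follows exactly as in the paper.
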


\begin{proof}
Step 1. Note that $G$ acts trivially on $M/M_0$. By Proposition
\ref{p6.2}, $K(M)^G=K(M_0)^G (x_1,\ldots,x_m)$ where
$m=rank_{\bm{Z}}(M/M_0)$.

Note that $K(M_0)^G=K(X^{(s_0)}\oplus R^{(s_1)} \oplus P^{(s_2)})
(y_1,y_2,\ldots,y_t)$ where $\sigma\cdot y_i=y_i$, $\tau\cdot
y_i=1/y_i$ for $1\le i\le t$. Define $v_i=(1+y_i)/(1-y_i)$ if
$\fn{char}k\ne 2$, define $v_i=1/(1+y_i)$ if $\fn{char}k=2$. Then
$\sigma\cdot v_i=v_i$, $\tau\cdot v_i=-v_i$ or $v_i+1$ depending
on $\fn{char}k\ne 2$ or $\fn{char}k=2$. Apply Theorem \ref{t6.1},
we find that $K(M_0)^G=K(X^{(s_0)}\oplus R^{(s_1)} \oplus
P^{(s_2)})(w_1,\ldots,w_t)$ where $\sigma(w_i)=\tau(w_i)=w_i$ for
$1\le i\le t$.

It remains to add variables $z_1,\ldots,z_l$ such that
$K(X^{(s_0)}\oplus R^{(s_1)}\oplus P^{(s_2)})(z_1,\ldots,z_l)$ is
$G$-isomorphic to $K(N_1\oplus N_2\oplus\cdots\oplus N_r)$ where
each $N_i$ is a $G$-lattice satisfying the condition that
$K(N_1\oplus N_2\oplus\cdots\oplus N_d)^G$ is rational over
$K(N_1\oplus\cdots\oplus N_{d-1})^G$ for all $1\le d\le r$.

This condition may be fulfilled if (i) $N_d$ is a permutation
lattice by applying Theorem \ref{t6.1}, or (ii)
$\fn{rank}_{\bm{Z}} N_d=2$ by applying Voskresenskii's Theorem for
2-dimensional tori \cite[page 57]{Vo}, or (iii)
$\fn{rank}_{\bm{Z}} N_d=3$ and $N_d$ gives rise to a rational
torus in Kunyavskii list \cite[Theorem 1]{Ku3}. Once the lattices
$N_1,\ldots,N_r$ are found, we may show that
$K(N_1\oplus\cdots\oplus N_r)^G$ is $k$-rational inductively.
Hence $K(X^{(s_0)}\oplus R^{(s_1)} \oplus P^{(s_2)})^G
(z_1,\ldots, z_l)$ is $k$-rational. But we have $t+m$ variables
arising from $\bm{Z}_-^{(t)}$ and $M/M_0$. Thus $l-(t+m)$ extra
variables is required. This explains the definition of $n$ in the
statement of the theorem.

\bigskip
Step 2.
For simplicity we consider how many variables we should add to $K(X)$, $K(R)$, $K(P)$ to
 achieve the goal in Step 1.

Consider $K(P)$ first. By Theorem \ref{t4.1}, $0\to P\to V\to
\bm{Z}\to 0$. Thus $K(V)=K(P)(x)$ by Proposition \ref{p6.2}. By
Lemma \ref{l4.4}, $V\simeq M_+$ is a permutation $G$-lattice.
Hence one more variable is enough for $K(P)$.

Consider $K(X)$. By Lemma \ref{l4.4}, $X \simeq M_-$. By
Definition \ref{d3.3}, we have an exact sequence $0\to\ M_-\to
\widetilde{M}_-\to \bm{Z}_-\to 0$. Thus $K(\widetilde{M}_-)$ is
$G$-isomorphic to $K(X)(y)$ by Proposition \ref{p6.2}. By Theorem
\ref{t3.5}, $\widetilde{M}_-\oplus \bm{Z}[G/\langle \tau \rangle]$
is a permutation lattice. But $K(\widetilde{M}_- \oplus
\bm{Z}[G/\langle
\tau\rangle])=K(\widetilde{M}_-)(u_1,u_2,\ldots,u_p)$ by
Proposition \ref{p6.2}. Thus $p+1$ variables is required for
$K(X)$.

Consider $K(R)$. By Theorem \ref{t4.1}, we have $0\to R\to Y_0 \to
\bm{Z}[H]\to 0$. From Lemma \ref{l4.5}, $Y_0\simeq
\widetilde{M}_-$.

Use the fact $\widetilde{M}_-\oplus \bm{Z}[G/\langle\tau\rangle]$
is permutation again. Thus we need $p+2$ extra variable this time.

In summary, for $K(X^{(s_0)} \oplus R^{(s_1)}\oplus P^{(s_2)})$,
we need $s_0(p+1)+s_1(p+2)+s_2$ extra variables. Subtract the
$t+m$ variables which were obtained previously.
\end{proof}

The same method may be used to prove Theorem \ref{t1.1} and
Theorem \ref{t1.2}. For the convenience of the reader we indicate
some crucial steps of the proof because \cite{Ku2} has only the
Russian version. We emphasize that our proof is almost the same as
those given by Voskresenskii and Kunyavskii in \cite{Vo,Ku1,Ku2}.

\begin{proof}[Proof of Theorem \ref{t1.1} and Theorem \ref{t1.2}] ~

(A) Let $G=C_p$ (where $h_p=1$), $C_4$ or $S_3$ ($\simeq D_3$), $M$ be any $G$-lattice.
Suppose $K/k$ is a Galois extension with $G\simeq \fn{Gal}(K/k)$.
We will show that $K(M)^G$ is $k$-rational.

Define $M_0=\{x\in M: (\sum_{g\in G} g)\cdot x=0\}$. As in the
proof of Theorem \ref{t6.5}, it remains to show that $K(M_0)^G$ is
$k$-rational.

\begin{Case}{1} $G=\langle \sigma \rangle \simeq C_p$ with $h_p=1$. \end{Case}

By Theorem \ref{t5.1}, $M_0\simeq R^{(m)}$ where $R=\bm{Z}[\zeta_p]$.
Note that $K(R)=K(x_1,x_2,\ldots,x_{p-1})$ with $\sigma:x_1 \mapsto x_2\mapsto \cdots \mapsto x_{p-1} \mapsto 1/(x_1x_2\cdots x_{p-1})$.
Define
\begin{gather*}
y_0=1+x_1+x_1x_2+\cdots+x_1x_2\cdots x_{p-1}, \\
y_1=1/y_0,~ y_2=x_1/y_0,~ \ldots,~ y_{p-1}=x_1x_2\cdots x_{p-2}/y_0.
\end{gather*}

Then $K(x_1,x_2,\ldots,x_{p-1})=K(y_1,y_2,\ldots,y_{p-1})$ with
$\sigma: y_1\mapsto y_2\mapsto\cdots\mapsto y_{p-1} \mapsto
1-y_1-y_2-\cdots-y_{p-1}$. Hence
$K(y_1,\ldots,y_{p-1})^{\langle\sigma\rangle}=K(z_1,\ldots,z_{p-1})^{\langle\sigma\rangle}$
where $\sigma(z_i)=z_i$ for $1\le i\le p-1$ by Theorem \ref{t6.1}.
The case $m \ge 2$ can be proved similarly.

\begin{Case}{2} $G=\langle\sigma\rangle \simeq C_4$. \end{Case}

The indecomposable $G$-lattices are listed in \cite[page 64]{Vo}.
We choose only these lattices which are annihilated by
$1+\sigma+\sigma^2+\sigma^3$. They are the lattices listed below
\[
(-1), ~ \begin{pmatrix} 0 & -1 \\ 1 & 0 \end{pmatrix},~ \begin{pmatrix} 0 & -1 & -1 \\ 1 & 0 & 0 \\ 0 & 0 & -1 \end{pmatrix}.
\]

The first one gives rise to $K(x)$ with $\sigma(x)=1/x$. As
before, the action can be linearized by setting $y=(1+x)/(1-x)$ or
$1/(1+x)$ depending on $\fn{char}k\ne 2$ or $\fn{char}k=2$.

The second is a rank-two lattice.
Thus it is rational by Voskresenskii's Theorem of 2-dimensional tori \cite[page 57]{Vo}.

The third one is the kernel of the augmentation map $\bm{Z}[G]\to
\bm{Z}$ by \cite[page 65, line 7]{Vo}. It gives rise to
$K(x_1,x_2,x_3)$ with $\sigma: x_1\mapsto x_2\mapsto x_3\mapsto
1/(x_1x_2x_3)$. This action can be linearized by the same method
of Case 1.

\begin{Case}{3} $G=\langle\sigma,\tau:\sigma^3=\tau^2=1,\tau\sigma\tau^{-1}=\sigma^{-1}\rangle\simeq S_3$. \end{Case}

As in the proof of Theorem \ref{t6.5} with $p=3$, $M_0=X^{(s_0)}
\oplus R^{(s_1)} \oplus P^{(s_2)} \oplus\bm{Z}_-^{(t)}$. The
action for $\bm{Z}_-$ can be linearized as before. Since
$\fn{rank}_{\bm{Z}}R=\fn{rank}_{\bm{Z}} P=2$, Voskresenskii's
Theorem takes case of these situations; thus it is unnecessary to
add new variables to ensure $k$-rational. Since $X$ is a rank-3
$D_3$-lattice, we apply Kunyavskii's Theorem \cite{Ku3}. Thus no
new variables are needed and we find that $K(X)$ is $k$-rational.
Done.

\bigskip
(B) Let
$G=\langle\sigma,\tau:\sigma^2=\tau^2=1,\sigma\tau=\tau\sigma\rangle
\simeq C_2\times C_2$, $M$ be any $G$-lattice. Suppose that $K/k$
is a Galois extension with $G\simeq \fn{Gal}(K/k)$. If $K(M)^G$ is
stably $k$-rational (resp. retract $k$-rational), it is
$k$-rational. Consequently, if $[M]^{fl}$ is flabby and coflabby,
then $K(M)^G$ is $k$-rational.

Define $M_0=\{x\in M: (\sum_{g\in G} g)\cdot x=0\}$. It follows
that $K(M_0)^G$ is also stably $k$-rational (resp. retract
$k$-rational by \cite[Lemma 3.4]{Ka2}). Note that $M_0$ is a
direct sum of indecomposable $G$-lattices annihilated by
$\sum_{g\in G} g$. These ``special" indecomposable lattices were
enumerated by Kunyavskii in \cite[page 537--538]{Ku1}. Except for
two of them, the ranks of these lattices are $\le 2$. Hence we may
apply Voskresenskii's Theorem again. The remaining two lattices
are of rank 3: One is $N_1$ which is the kernel of the
augmentation map $\bm{Z}[G]\to \bm{Z}$, the other is
$N_2=\fn{Hom}(N_1,\bm{Z})$ (see \cite[page 540]{Ku1}). By
\cite{Ku3}, $K(N_1)^G$ is $k$-rational and $K(N_2)^G$ is not
retract $k$-rational.

Since $K(M_0)^G$ is retract $k$-rational, we find that $N_2$ will
not appear as a direct summand of $M_0$. Hence $M_0=M_1\oplus
M_2\oplus\cdots\oplus M_t$ where $M_i$ is either $N_1$ or is of
$\fn{rank}\le 2$. Thus $K(M_0)^G$ is $k$-rational.

Finally, when $[M]^{fl}$ is flabby and coflabby, apply Theorem
\ref{t4.11}.

\end{proof}

Note that Theorem \ref{t1.5} is a consequence of the following
theorem.

\begin{theorem} \label{t6.6}
Let $p$ is a prime number, $G=\langle\sigma\rangle\simeq C_p$, and
$k$ be a field admitting a $G$-extension.
\begin{enumerate}
\item[{\rm (1)}] If $T$ is a $G$-torus over $k$ which is stably
rational, then $T$ is rational.

 \item[{\rm (2)}]
$h_p=1$ if and only if all the $G$-tori over $k$ are stably
rational.

\end{enumerate}
\end{theorem}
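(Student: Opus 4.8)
The plan is to treat an arbitrary $C_{p}$-lattice $M$ by first peeling off its anisotropic part and then controlling that part by a Steinitz-class computation, exactly in the spirit of the proof of Theorem \ref{t5.4}. Set $M_{0}=\{x\in M:(\sum_{g\in G}g)\cdot x=0\}$ as in Definition \ref{d6.3}; since $\sum_{g\in G}g=\Phi_{p}(\sigma)$, this is the submodule killed by $\Phi_{p}(\sigma)$, and $M/M_{0}$ carries the trivial $G$-action, hence is a permutation lattice. Because $G\simeq C_{p}$ acts faithfully on $K$ and so on $K(M_{0})$, Proposition \ref{p6.2} applied to $0\to M_{0}\to M\to M/M_{0}\to 0$ yields $K(M)^{G}=K(M_{0})^{G}(x_{1},\ldots,x_{m})$ with $m=\fn{rank}_{\bm{Z}}(M/M_{0})$, reducing everything to $K(M_{0})^{G}$. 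As $M_{0}$ is annihilated by $\Phi_{p}(\sigma)$ it is a lattice over $\bm{Z}[G]/\langle\Phi_{p}(\sigma)\rangle\simeq R=\bm{Z}[\zeta_{p}]$, so as an $R$-module $M_{0}\simeq\bigoplus_{1\le i\le t}\c{I}_{i}$ for ideals $\c{I}_{i}$ of $R$, and by Lemma \ref{l5.3} and Definition \ref{d5.2} its Steinitz class is $cl(M_{0})=[\c{I}_{1}\cdots\c{I}_{t}]\in C(R)$.

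For part (1), let $T$ have character lattice $M$ and be stably $k$-rational. By Theorem \ref{t2.6}, $[M]^{fl}$ is permutation, so in a flabby resolution $0\to M\to P\to E\to 0$ the flabby lattice $E$ is stably permutation; Steps 2--3 of the proof of Theorem \ref{t5.4} then give $cl(E)=[R]$, and multiplicativity of $cl$ on this sequence forces $cl(M)=[R]$. Since $cl(M)=cl(M_{0})$ by Lemma \ref{l5.3}, we obtain $[\c{I}_{1}\cdots\c{I}_{t}]=[R]$, i.e. $\c{I}_{1}\cdots\c{I}_{t}$ is principal. By Steinitz's theorem over the Dedekind domain $R$ we have $\bigoplus_{i}\c{I}_{i}\simeq R^{(t-1)}\oplus(\c{I}_{1}\cdots\c{I}_{t})\simeq R^{(t)}$ as $R$-modules, and this isomorphism is $G$-equivariant because $\sigma$ acts throughout as multiplication by $\zeta_{p}\in R$. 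Hence $M_{0}\simeq R^{(t)}$, and it remains to show $K(R^{(t)})^{G}$ is $k$-rational. This is precisely Case 1 of the proof of Theorem \ref{t1.1} and Theorem \ref{t1.2}: on each copy of $R$ the substitution $y_{0}=1+x_{1}+x_{1}x_{2}+\cdots$, $y_{i}=x_{1}\cdots x_{i-1}/y_{0}$ turns the $\sigma$-action into an affine-linear one, and Theorem \ref{t6.1} supplies $G$-invariant coordinates. Thus $K(M_{0})^{G}$, and so $K(M)^{G}$, is $k$-rational, proving (1).

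For part (2), the direction ``$h_{p}=1\Rightarrow$ all $C_{p}$-tori are stably (indeed $k$-) rational'' needs no stability hypothesis: when $h_{p}=1$ every ideal of $R$ is principal, so the decomposition above already gives $M_{0}\simeq R^{(t)}$ for every $M$, and the same linearization shows $K(M)^{G}$ is $k$-rational. For the converse we use contraposition: assume $h_{p}\ge 2$ and pick a non-principal ideal $\c{B}$ of $R$, viewed as a $C_{p}$-lattice via $\sigma\cdot\alpha=\zeta_{p}\alpha$. Then $\c{B}$ is anisotropic with $cl(\c{B})=[\c{B}]\neq[R]$ by Definition \ref{d5.2}. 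Were the torus with character lattice $\c{B}$ stably $k$-rational, $[\c{B}]^{fl}$ would be permutation, forcing $cl(\c{B})=[R]$ by the Steinitz-class argument of part (1) --- a contradiction. Hence some $C_{p}$-torus over $k$ fails to be stably $k$-rational, which proves (2) and simultaneously recovers Theorem \ref{t1.5}.

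The genuine obstacle is the middle step of part (1): translating the flabby-class hypothesis ``$[M]^{fl}$ is permutation'' into the arithmetic statement ``$\c{I}_{1}\cdots\c{I}_{t}$ is principal''. The Steinitz class is the bridge, its multiplicativity on exact sequences (Definition \ref{d5.2}) together with its vanishing on stably permutation lattices (Steps 2--3 of Theorem \ref{t5.4}) being exactly what makes $cl$ a well-defined obstruction in $C(R)$; once $cl(M_{0})=[R]$ is secured, Steinitz's structure theorem over $R$ collapses $M_{0}$ to a free $R$-lattice. The only real computation, the rationality of $K(R^{(t)})^{G}$, is the already-established linearization via Theorem \ref{t6.1} and is not reproduced here.
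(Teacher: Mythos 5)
Your proof is correct and follows essentially the same route as the paper's: reduce to the anisotropic part $M_0$, decompose it as a sum of ideals of $\bm{Z}[\zeta_p]$ via Theorem \ref{t5.1}, use the Steinitz-class argument of Theorem \ref{t5.4} to show stable rationality forces the class to be trivial (hence $M_0\simeq R^{(t)}$), and finish with the linearization of Case 1 of the proof of Theorems \ref{t1.1}--\ref{t1.2}; the converse via a non-principal ideal is likewise identical. The only cosmetic difference is that you work with the product $[\c{I}_1\cdots\c{I}_t]$ directly where the paper first normalizes $M_0$ to $\c{B}\oplus R^{(m)}$.
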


\begin{proof}
(1) Working on the character module $M$ of $T$, it suffices to
show that, if $K/k$ is a Galois extension with
$\fn{Gal}(K/k)\simeq G$ and $K(M)^G$ is stably rational, then it
is $k$-rational.

Define $M_0=\{x\in M:(\sum_{g\in G} g)\cdot x=0\}$. By assumption,
$K(M_0)^G$ is stably $k$-rational. Since $M_0$ is annihilated by
$\sum_{g\in G}g$, $M_0$ is a direct sum of the ideals $\c{B}$'s by
Theorem \ref{t5.1} where $\c{B}$'s are ideals of
$\bm{Z}[\zeta_p]$. Without loss of generality, we may write
$M_0=\c{B}\oplus (\bm{Z}[\zeta_p])^{(m)}$ for some ideal $\c{B}$
and some non-negative integer $m$.

Since $K(M_0)^G$ is stably rational, the class $[M_0]^{fl}$ is
stably permutation. Because $[\bm{Z}[\zeta_p]]^{fl}$ is
permutation, we find that $[\c{B}]^{fl}$ is stably permutation.

As before, checking the Steinitz class, we find that $\c{B}$ is a
principal ideal. In other words, $M_0=\bm{Z}[\zeta_p]^{(m')}$ for
some integer $m'$. But then the action of $\sigma$ on $K(M_0)$ may
be linearized as in the proof of Case 1 of (A) for the proof of
Theorem \ref{t1.1} and Theorem \ref{t1.2}. Thus $K(M_0)^G$ is
$k$-rational.

(2) It remains to show that, if $h_p\ge 2$, then there is a
$G$-torus which is not stably rational.

Use the same method as in the proof of Theorem \ref{t5.4}. It
suffices to find a $G$-lattice $M$ such that $[M]^{fl}$ is not
stably permutation.

Since $h_p\ge 2$, there is a non-principal ideal $\c{B}$ in
$\bm{Z}[\zeta_p]$. Define $M=\c{B}$. If $[M]^{fl}$ is stably
permutation, then there exist permutation $G$-lattices $Q_1$ and
$Q_2$ such that $0\to M\to Q_1\to Q_2\to 0$ is exact. Hence the
Steinitz class $cl(M)=[\bm{Z}[\zeta_p]]$. However we know that
$cl(M)=[\c{B}]$ and $\c{B}$ is not a principal ideal. A
contradiction.

\end{proof}

\begin{remark}
Part (1) of the above theorem is just a special case of a more
general result. Let $k$ be a field admitting a $C_n$-extension and
$T$ be a $C_n$-torus over $k$. Thanks to the works of Endo and
Miyata, Voskresenskii, Chistov, Bashmakov and Klyachko (see
\cite[pages 62-63, 69-71]{Vo}), if $n=p^aq^b$ where $p,q$ are
prime numbers and $a,b$ are non-negative integers, then a
$C_n$-torus $T$ is stably $k$-rational if and if it is
$k$-rational.
\end{remark}

The proof of Theorem \ref{t6.6} may be adapted to solve another rationality problem.

\begin{defn} \label{d6.8}
Let $G$ be any finite group, $k$ be any field. Let $k(x_g:g\in G)$
be the rational function field in $|G|$ variables over $k$ with a
$G$-action via $k$-automorphism defined by $h\cdot x_g=x_{hg}$ for
any $h,g\in G$. Define $k(G):= k(x_g: g\in G)^G$ the fixed field.
Noether's problem asks whether $k(G)$ is $k$-rational.
\end{defn}

\begin{theorem} \label{t6.7}
Let $k$ be any field, $G=\langle \sigma,\tau:\sigma^n=\tau^2=1,\tau\sigma\tau^{-1}=\sigma^{-1}\rangle\simeq D_n$ where $n\ge 3$ is an odd integer.
Define an action of $G$ on the rational function field $k(x_1,x_2,\ldots,x_{n-1})$ through $k$-automorphisms defined by
\begin{align*}
\sigma &: x_1\mapsto x_2\mapsto \cdots \mapsto x_{n-1} \mapsto 1/(x_1x_2\cdots x_{n-1}), \\
\tau &: x_i \leftrightarrow x_{n-i}.
\end{align*}

Then $k(x_1,\ldots, x_{n-1})^G$ is stably $k$-rational if and only if $k(G)$ is stably $k$-rational.
\end{theorem}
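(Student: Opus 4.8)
The plan is to recognize the field $k(x_1,\ldots,x_{n-1})^G$ as $k(N_+)^G$, where $N_+$ is the $G$-lattice introduced after Definition~\ref{d3.2}, and then to transport stable rationality between $k(N_+)^G$ and $k(G)=k(\bm{Z}[G])^G$ through the two structural exact sequences already proved in Section~4, using Proposition~\ref{p6.2} to absorb permutation quotients as free $G$-fixed variables. First I would check the lattice identification: assigning to $x_i$ the basis vector $u_i$ of $N_+$ as in Lemma~\ref{l4.3}, the prescribed action $\sigma:x_1\mapsto\cdots\mapsto x_{n-1}\mapsto 1/(x_1\cdots x_{n-1})$, $\tau:x_i\leftrightarrow x_{n-i}$ translates exactly into $\sigma:u_1\mapsto\cdots\mapsto u_{n-1}\mapsto -(u_1+\cdots+u_{n-1})$, $\tau:u_i\leftrightarrow u_{n-i}$. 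Hence $k(x_1,\ldots,x_{n-1})^G=k(N_+)^G$, with $G$ acting trivially on $k$. Since $\sigma$ has order $n$ and $\tau$ acts nontrivially on $N_+$, the group $G$ is faithful on $k(N_+)$, and likewise on $k(N_-)$ and $k(N_+\oplus N_-)$; these faithfulness facts are precisely what permit Proposition~\ref{p6.2} to be applied with $k'=k$.

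The first reduction links $k(G)$ to $k(N_+\oplus N_-)^G$. By Lemma~\ref{l4.6} there is an exact sequence $0\to N_+\oplus N_-\to \bm{Z}[G]\to \bm{Z}[G/\langle\tau\rangle]\to 0$, and $\bm{Z}[G/\langle\tau\rangle]=M_+$ is a permutation lattice of rank $n$ (Definition~\ref{d3.1}). Applying Proposition~\ref{p6.2} with $M_0=N_+\oplus N_-$ gives $k(\bm{Z}[G])=k(N_+\oplus N_-)(x_1,\ldots,x_n)$ with each $x_j$ fixed by $G$; passing to invariants, $k(G)=k(\bm{Z}[G])^G=k(N_+\oplus N_-)^G(x_1,\ldots,x_n)$. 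Thus $k(G)$ is stably $k$-rational if and only if $k(N_+\oplus N_-)^G$ is.

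The second reduction links $k(N_+\oplus N_-)^G$ to $k(N_+)^G$ by sandwiching the lattice $N_+\oplus M_+$ between them. On one hand the split sequence $0\to N_+\to N_+\oplus M_+\to M_+\to 0$ has permutation quotient $M_+$, so Proposition~\ref{p6.2} gives $k(N_+\oplus M_+)=k(N_+)(z_1,\ldots,z_n)$ with the $z_i$ fixed. On the other hand, taking the direct sum of the identity on $N_+$ with the sequence $0\to N_-\to M_+\to \bm{Z}\to 0$ of Lemma~\ref{l4.4} yields $0\to N_+\oplus N_-\to N_+\oplus M_+\to \bm{Z}\to 0$ with permutation quotient $\bm{Z}$, whence $k(N_+\oplus M_+)=k(N_+\oplus N_-)(y)$ with $y$ fixed. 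Taking invariants, $k(N_+)^G(z_1,\ldots,z_n)=k(N_+\oplus M_+)^G=k(N_+\oplus N_-)^G(y)$, so $k(N_+)^G$ and $k(N_+\oplus N_-)^G$ are stably isomorphic over $k$. Combining the two reductions, $k(x_1,\ldots,x_{n-1})^G=k(N_+)^G$ is stably $k$-rational if and only if $k(G)$ is.

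I expect the delicate points to be bookkeeping ones rather than conceptual: verifying the faithfulness hypotheses of Proposition~\ref{p6.2} at each application, so that the $k'=k$ case is legitimate, and confirming that adjoining the $G$-fixed variables produced by Proposition~\ref{p6.2} genuinely gives a stable-isomorphism relation, i.e.\ that a field $A$ is stably $k$-rational precisely when $A(z_1,\ldots,z_n)$ is. Both are routine once the identification $k(x_1,\ldots,x_{n-1})^G=k(N_+)^G$ and the two exact sequences of Lemmas~\ref{l4.4} and~\ref{l4.6} are in hand.
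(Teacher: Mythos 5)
Your argument is correct, and it reaches the conclusion by a genuinely different chain of reductions than the paper's. The paper also starts from the identification $k(x_1,\ldots,x_{n-1})^G=k(N_+)^G$ and then climbs to $\bm{Z}[G]$ through the rank-$(n+1)$ lattice $\widetilde{M}_-$: it applies Proposition \ref{p6.2} to the sequence $0\to N_+\to \widetilde{M}_-\to \bm{Z}[G/\langle\sigma\rangle]\to 0$ of Lemma \ref{l4.5}, and then uses the stable-permutation isomorphism $\widetilde{M}_-\oplus\bm{Z}[G/\langle\tau\rangle]\simeq\bm{Z}[G]\oplus\bm{Z}$ of Theorem \ref{t3.5} to pass to $k(\bm{Z}[G])$. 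You instead route everything through $N_+\oplus N_-$, using Lemma \ref{l4.6} ($0\to N_+\oplus N_-\to\bm{Z}[G]\to\bm{Z}[G/\langle\tau\rangle]\to 0$) on the $k(G)$ side and the sandwich lattice $N_+\oplus M_+$ together with Lemma \ref{l4.4} ($0\to N_-\to M_+\to\bm{Z}\to 0$) on the $k(N_+)$ side, so that $\widetilde{M}_\pm$ and Theorem \ref{t3.5} never enter. Both routes rest on the same engine (Proposition \ref{p6.2} with $k'=k$, where faithfulness is supplied by the lattice since $\sigma$ has order $n$ on $N_+$ and $\tau$ acts nontrivially), and each ultimately depends on one nontrivial determinant computation from earlier in the paper --- Theorem \ref{t3.5} for the authors, Lemma \ref{l4.6} for you; your version has the mild advantage of using only exact sequences already needed for the classification in Section 4, at the cost of one extra application of Proposition \ref{p6.2}. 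The two bookkeeping points you flag at the end are indeed routine: adjoining $G$-fixed indeterminates is exactly the definition of stable isomorphism over $k$, and stable $k$-rationality passes back and forth across purely transcendental extensions by absorbing the added variables.
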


\begin{proof}
We may write $k(x_1,\ldots,x_{n-1})^G=k(M)^G$ where $M$ is the
$G$-lattice $\bm{Z}[G/\langle\tau\rangle]$ (for the definition of
the field $k(M)$ with $G$ actions, see Definition \ref{d2.5}).
Note that $M$ is nothing but $N_+$ in Definition \ref{d3.2}.

By Lemma \ref{l4.5}, we find that $0\to N_+\to \widetilde{M}_-\to \bm{Z}[G/\langle\sigma\rangle]\to 0$ is an exact sequence of $G$-lattices.
By Proposition \ref{p6.2}, $k(\widetilde{M}_-)$ is $G$-isomorphic to $k(M)(y_1,y_2)$ where $\sigma(y_i)=\tau(y_i)=y_i$ for $1\le i\le 2$.

By Theorem \ref{t3.5},
$\widetilde{M}_-\oplus\bm{Z}[G/\langle\tau\rangle]\simeq
\bm{Z}[G]\oplus \bm{Z}$. Thus, by Proposition \ref{p6.2} again,
$k(\widetilde{M}_-)(z_1,\ldots,z_n)$ is $G$-isomorphic to
$k(\bm{Z}[G])(z_0)$ where $\sigma(z_i)=\tau(z_i)=z_i$ for $0\le
i\le n$.

Hence $k(M)^G$ is stably isomorphic to $k(M)^G$ with
$M=\bm{Z}[G]$, which is nothing but $k(G)$.
\end{proof}

\section*{Appendix}

The ideas in this appendix were communicated by Shizuo Endo to the
second-named author.

\medskip
First we recall some terminology in \cite{EM}.

Let $G$ be a finite group. Define an equivalence relation in the
set of $G$-lattices: Two $G$-lattices $M$ and $N$ are equivalent,
denoted by $M - N$, if, for any field $k$ admitting a
$G$-extension, for any Galois extension $K/k$ with $Gal (K/k)
\simeq G$, the fields $K(M)^G$ and $K(N)^G$ are stably isomorphic
over $k$, i.e. $K(M)^G (X_1, \ldots, X_m) \simeq K(N)^G (Y_1,
\ldots, Y_n)$ for some algebraically independent elements
$X_i,Y_j$.

\setcounter{theorem}{0}
\renewcommand{\thetheorem}{A\arabic{theorem}}
\begin{lemma} \label{lA1}
Let $G$ be a finite group, $M$ and $N$ be $G$-lattices. Then $M -
N$ if and only if $[M]^{fl} =[N]^{fl}$.
\end{lemma}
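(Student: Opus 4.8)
The plan is to prove the two implications separately. Sufficiency, $[M]^{fl}=[N]^{fl}\Rightarrow M-N$, I expect to be essentially formal, resting on the no-name lemma in the form of Proposition \ref{p6.2}; necessity is where the genuine content lies.

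For sufficiency I would first normalize the flabby resolutions. Choose flabby resolutions $0\to M\to P_M\to E_M\to 0$ and $0\to N\to P_N\to E_N\to 0$ with $P_M,P_N$ permutation and $E_M,E_N$ flabby, so that $[E_M]=[M]^{fl}$ and $[E_N]=[N]^{fl}$. The hypothesis $[E_M]=[E_N]$ in $F_G$ gives permutation lattices $A,B$ with $E_M\oplus A\cong E_N\oplus B$; adding the split sequences with summand $A$, resp.\ $B$, I may assume from the outset that $M$ and $N$ sit in flabby resolutions with one and the same flabby term $E$, say $\phi\colon P\to E$ and $\phi'\colon P'\to E$ with $P,P'$ permutation:
\[
0\to M\to P\to E\to 0,\qquad 0\to N\to P'\to E\to 0.
\]

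The key device is then the fibre product $W=\{(x,y)\in P\oplus P':\phi(x)=\phi'(y)\}$. Since $\phi,\phi'$ are surjective, the two projections $W\to P'$ and $W\to P$ are surjective with kernels $M$ and $N$, giving exact sequences of $G$-lattices
\[
0\to M\to W\to P'\to 0,\qquad 0\to N\to W\to P\to 0
\]
whose quotients $P',P$ are permutation. For any field $k$ admitting a $G$-extension and any Galois $K/k$ with $\fn{Gal}(K/k)\simeq G$, the group $G$ acts faithfully on $K$, hence on $K(M)$ and on $K(N)$, so Proposition \ref{p6.2} applies to each sequence and yields purely transcendental extensions $K(W)^G=K(M)^G(x_1,\ldots,x_a)$ with $a=\fn{rank}_{\bm{Z}}P'$ and $K(W)^G=K(N)^G(y_1,\ldots,y_b)$ with $b=\fn{rank}_{\bm{Z}}P$. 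Thus $K(M)^G$ and $K(N)^G$ are each stably $k$-isomorphic to $K(W)^G$, hence to each other; as this holds for every such $K/k$, we get $M-N$.

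For the converse, $M-N\Rightarrow[M]^{fl}=[N]^{fl}$, I would fix one convenient $G$-extension $K/k$ (available by hypothesis) and argue that the class $[M]^{fl}\in F_G$ is a stable birational invariant of $K(M)^G$ over $k$: up to permutation summands it is recovered from the $G$-module $\fn{Pic}(\overline{X})$ of a smooth projective $G$-equivariant compactification of the torus with character lattice $M$, and this module is unchanged by stable isomorphism, being a birational invariant of smooth projective models that is insensitive to adjoining affine coordinates (i.e.\ to permutation summands). This is the theory of Voskresenskii and Endo–Miyata \cite{Vo,EM}, of which Theorem \ref{t2.6} is the special case detecting $[M]^{fl}=0$. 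The main obstacle is exactly here: the fibre-product construction makes sufficiency routine, but necessity needs the nontrivial input that $[M]^{fl}$ can be read off the stable birational class of $K(M)^G$, which is where the geometric/cohomological machinery enters. I would therefore invoke this invariance as a cited result from \cite{Vo,EM,CTS}, or, for a self-contained account, reconstruct it from the stable-birational invariance of the flabby class.
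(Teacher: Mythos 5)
Your proof is correct, but its two halves sit differently relative to the paper's argument. The paper handles both directions at once by citing the idea of Lenstra's Theorem 1.7 --- stable $k$-isomorphism of $K(M)^G$ and $K(N)^G$ is equivalent to the existence of a common $G$-lattice $E$ fitting into exact sequences $0\to M\to E\to P\to 0$ and $0\to N\to E\to Q\to 0$ with $P,Q$ permutation --- and then Swan's Lemma 8.8 to identify that condition with $[M]^{fl}=[N]^{fl}$. Your sufficiency argument (normalize the flabby resolutions to share the flabby term, form the fibre product $W$, apply Proposition \ref{p6.2} twice) is precisely the standard construction behind the ``if'' half of that equivalence and is complete as written; your $W$ is the common $E$. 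For necessity you genuinely depart from the paper: instead of Lenstra's purely algebraic recovery of a common lattice from the multiplicative structure of the function field, you read $[M]^{fl}$ off $\fn{Pic}$ of a smooth projective equivariant compactification and invoke its stable birational invariance up to permutation summands (Voskresenskii, Colliot-Th\'el\`ene--Sansuc). That route is legitimate, with two caveats: (i) you need such a compactification to exist over the chosen base --- harmless here, since the hypothesis quantifies over all $k$ admitting a $G$-extension, so you may pick one of characteristic zero (or invoke the equivariant toric resolution in general); and (ii) your closing alternative, ``reconstruct it from the stable-birational invariance of the flabby class,'' is circular as phrased, since that invariance \emph{is} the necessity direction --- the substance must come from the cited identification of $[M]^{fl}$ with the class of $\fn{Pic}(X_K)$ in $F_G$. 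The geometric route buys conceptual transparency (the flabby class is visibly an invariant of smooth models); the paper's route stays entirely inside lattice theory and function fields and avoids compactifications altogether.
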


\begin{proof}
Using the same idea in the proof of \cite[Theorem 1.7]{Len}, it is
not difficult to show that $K(M)^G$ and $K(N)^G$ are stably
isomorphic over $k$ if and only if there exist exact sequences $0
\to M \to E \to P \to 0$ and $0 \to N \to E \to Q \to 0$ where $P$
and $Q$ are permutation $G$-lattices and $E$ is some $G$-lattice.
The latter condition is equivalent to $[M]^{fl} =[N]^{fl}$ by
\cite[Lemma 8.8]{Sw2}.
\end{proof}

\begin{defn} \label{dA2}
Let $G$ be a finite group. The commutative monoid $T(M)$ is
defined as follows. As a set, $T(M)$ is the set of all equivalence
classes $[M]$ under the equivalence relation ``$-$" defined above
where $M$ is any $G$-lattice and $[M]$ is the equivalence class
containing $M$. The addition in $T(M)$ is defined by $[M]+[N]=[M
\oplus N]$.

Recall the flabby class monoid $F_G$ defined in Section 2. By
Lemma \ref{lA1}, it is easy to see that $T(G) \to F_G$ is an
isomorphism by sending $[M]$ to $[M]^{fl}$.
\end{defn}

\begin{defn} \label{dA3}
 For a
finite group $G$, let $\Lambda$ be a $\bm{Z}$-order satisfying
$\bm{Z}[G] \subset \Lambda \subset \bm{Q}[G]$. We will define the
locally free class group of $\Lambda$ following \cite{EM1}. Let
$K_0(\Lambda)$ be the Grothendieck group of the category of
locally free $\Lambda$-modules of finite constant ranks. Define a
subgroup $C(\Lambda)$ of $K_0(\Lambda)$ by $C(\Lambda) =\{[M] -
n[\Lambda]: M \, is \,\,locally \,\,free \,\,of \,\,rank \,n,$
$where \, \,n \,\, runs \,\, over \,\, all \,\, positive \,\,
integers \}$. The group $C(\Lambda)$ is called the locally free
class group of $\Lambda$.

For an idele definition of $C(\Lambda)$, see \cite[page 219]{CR2}.
\end{defn}

\begin{theorem} {\rm (Endo and Miyata \cite{EM})} \label{tA4}
Let $G=C_n$ or $D_p$ where $n$ is any positive integer and $p$ is
an odd prime number. Then $T(G) \simeq C(\Omega_{\bm{Z}[G]})$
where $\Omega_{\bm{Z}[G]} $ is a maximal order in $\bm{Q}[G]$
containing $\bm{Z}[G]$,
\end{theorem}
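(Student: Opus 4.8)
The plan is to combine the identification of $T(G)$ with the flabby class monoid with the fact that the groups in question have cyclic Sylow subgroups, and then to compare flabby classes with locally free $\Omega$-modules by extension of scalars to the maximal order. First I would invoke Definition \ref{dA2} (resting on Lemma \ref{lA1}), which gives a monoid isomorphism $T(G)\xrightarrow{\sim}F_G$, $[M]\mapsto [M]^{fl}$; thus it suffices to prove $F_G\simeq C(\Omega_{\bm{Z}[G]})$. For $G=C_n$ the Sylow subgroups are cyclic trivially, while for $G=D_p$ they are $\langle\sigma\rangle\simeq C_p$ and the conjugates of $\langle\tau\rangle\simeq C_2$, again cyclic. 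Hence Theorem \ref{t2.7} applies and every flabby $G$-lattice is invertible. In particular $F_G$ is not merely a monoid but an abelian group: if $E$ is flabby, choose $E'$ with $E\oplus E'\simeq P$ a permutation lattice (possible since $E$ is invertible); then $E'$ is again a direct summand of $P$, hence invertible, and $[E]+[E']=[P]=0$ in $F_G$.

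Next I would construct the comparison homomorphism $\Phi\colon F_G\to C(\Omega)$, writing $\Omega=\Omega_{\bm{Z}[G]}$ and decomposing $\bm{Q}[G]=\prod_i A_i$, $\Omega=\prod_i\Omega_i$ according to Wedderburn. Since $\Omega$ is a maximal order in the semisimple algebra $\bm{Q}[G]$ over the Dedekind domain $\bm{Z}$, it is hereditary, so for any $G$-lattice $E$ the scalar extension $\Omega\otimes_{\bm{Z}[G]}E$ is a projective $\Omega$-lattice, and in each block $\Omega_i\otimes_{\bm{Z}[G]}E$ has a well-defined reduced class in $C(\Omega_i)$ computed via reduced norms (the idelic description referenced in Definition \ref{dA3} and \cite{CR2}). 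I set $\Phi([E])\in C(\Omega)=\prod_i C(\Omega_i)$ to be this tuple of reduced classes. The map is additive in $E$, and it annihilates permutation lattices: exactly as in the Steinitz-class bookkeeping of Definition \ref{d5.2} and Step 2 of the proof of Theorem \ref{t5.4} (where $cl(\bm{Z})$ and $cl(\bm{Z}[S])$ are both the trivial class $[R]$), a permutation lattice extends to an $\Omega$-lattice of trivial reduced class in every block. Hence $\Phi$ descends to a well-defined homomorphism on $F_G$, and is in fact the global refinement of the Steinitz class already used in Section 5.

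It then remains to show $\Phi$ is bijective, and this is where the input of Endo--Miyata \cite{EM} (their Theorem 3.3) is essential. For surjectivity I would exhibit, for each class of $C(\Omega)$, an explicit flabby lattice mapping to it. For $G=C_n$ the rank-one projectives $W_{\c{B}}$ of Theorem \ref{t5.1} are cohomologically trivial, hence flabby, and satisfy $\Phi([W_{\c{B}}])=cl(W_{\c{B}})=[\c{B}]$, so as $\c{B}$ ranges over the ideal classes of the rings $\bm{Z}[\zeta_d]$ ($d\mid n$) one covers $C(\Omega)=\prod_{d\mid n}C(\bm{Z}[\zeta_d])$. For $G=D_p$ the lattices $V_{\c{A}}$ are flabby by Proposition \ref{p4.8} and map to $[\c{A}]\in C(R_0)\simeq C(\Omega)$, exactly as computed in Theorem \ref{t5.4}, so again every class is realized. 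Injectivity is the crux and the main obstacle: I must show that a flabby (hence invertible) lattice $E$ with $\Phi([E])=0$ is stably permutation. All such lattices lie in the genus of a permutation lattice, so the real content is that for $C_n$ and $D_p$ the position of an invertible lattice \emph{within its genus} is faithfully detected by the reduced class of its extension to the maximal order; this descent from $\Omega$ back to $\bm{Z}[G]$, in which the only possible discrepancy is a local error term controlled by the conductor of $\bm{Z}[G]$ in $\Omega$, is precisely what Theorem 3.3 of \cite{EM} supplies. Establishing that this error term vanishes --- so that $\Phi$ is injective rather than merely surjective with controlled kernel --- is the step I expect to be delicate, and it is the reason the computation is carried out for the specific groups $C_n$ and $D_p$.
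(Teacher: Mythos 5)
Your proposal is compatible with the paper, which in fact offers no proof of Theorem \ref{tA4} at all: the result is attributed to Endo and Miyata, and the remark following it simply defers to Theorem 3.3 of \cite{EM} (with pointers to \cite{EM3} and \cite{Sw3}). Your outline correctly locates the entire substance in exactly that citation --- the injectivity of the comparison map, i.e.\ that an invertible $G$-lattice whose extension to the maximal order has trivial reduced class is stably permutation --- and the scaffolding you build around it is sound: $T(G)\simeq F_G$ by Lemma \ref{lA1}; $F_G$ is a group because the Sylow subgroups of $C_n$ and $D_p$ are cyclic, so Theorem \ref{t2.7} makes every flabby lattice invertible; and permutation lattices become stably free over $\Omega_{\bm{Z}[G]}$. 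Two small repairs are worth making. First, the lattices $W_{\c{B}}$ of Theorem \ref{t5.1} are defined only for $G=C_p$ with $p$ prime, so for general $C_n$ your surjectivity argument should instead invoke the surjectivity of $C(\bm{Z}[G])\to C(\Omega_{\bm{Z}[G]})$ (\cite[page 230, Theorem 49.25]{CR2}, already used in the proof of Theorem \ref{tA6}) together with the fact that projective $\bm{Z}[G]$-lattices are cohomologically trivial, hence flabby; this also handles $D_p$ uniformly. Second, for $G=D_p$ the identification $C(\Omega_{\bm{Z}[G]})\simeq C(R_0)$ rests on the Morita equivalence of the maximal order in the $2\times 2$ matrix block of $\bm{Q}[G]$ with $R_0=\bm{Z}[\zeta_p+\zeta_p^{-1}]$, and should be stated before you compute $\Phi(V_{\c{A}})=[\c{A}]$; note also that the Steinitz class of Section 5 is taken in $C(\bm{Z}[\zeta_p])$ after restriction to $\langle\sigma\rangle$, so it is only a proxy for the reduced class you actually need. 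With those caveats, your sketch and the paper rest on the same external input and neither constitutes an independent proof of the injectivity step.
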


\begin{remark}
The above theorem is just a special case of Theorem 3.3 in
\cite{EM} (see \cite{EM3} also). When $G$ is a cyclic group,
besides the proof given in \cite{EM}, Swan has another proof in
\cite{Sw3}. When $G$ is a non-cyclic group (see \cite[page 189,
line -15]{EM3}), similar arguments and similar exact sequences as
in \cite[page 96]{EM} may be applied to obtain a proof.
\end{remark}

The following theorem gives a generalization of Theorem
\ref{t1.5}.

\begin{theorem} \label{tA5}
Let $G=C_n$ where $n$ is a positive integer, and $k$ be a field
admitting a $G$-extension. Then $h_n=1$ if and only if all the
$G$-tori over $k$ are stably $k$-rational.
\end{theorem}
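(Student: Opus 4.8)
The plan is to deduce the result from the isomorphism $T(G)\simeq C(\Omega_{\bm{Z}[G]})$ of Theorem \ref{tA4}, together with an explicit computation of the right-hand side. First I would record the dictionary between tori and the monoid $T(G)$. By Theorem \ref{t2.6}(1) and the isomorphism $T(G)\to F_G$, $[M]\mapsto[M]^{fl}$, of Definition \ref{dA2}, the fixed field $K(M)^G$ is stably $k$-rational if and only if $[M]^{fl}$ is permutation, i.e. $[M]$ is the zero element of $T(G)$. Hence all $G$-tori over $k$ are stably $k$-rational if and only if $T(G)$ is the trivial group. Observe that this criterion is intrinsic to the lattice $M$ and is independent of the particular field $k$ and of the chosen $G$-extension $K/k$; by Theorem \ref{tA4} the monoid $T(C_n)$ is in fact the finite abelian group $C(\Omega_{\bm{Z}[C_n]})$, so triviality is an honest statement about that class group.

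Next I would compute $C(\Omega_{\bm{Z}[C_n]})$. Since $\bm{Q}[C_n]\simeq\prod_{d\mid n}\bm{Q}(\zeta_d)$, a maximal order containing $\bm{Z}[C_n]$ is $\Omega=\prod_{d\mid n}\bm{Z}[\zeta_d]$, the product of the rings of integers. For a maximal order which is a finite product of Dedekind domains, a locally free module of constant rank splits as a product of projective modules over the factors, and its class in $K_0$ is recorded by the tuple of Steinitz (determinant) classes; consequently $C(\Omega)\simeq\prod_{d\mid n}C(\bm{Z}[\zeta_d])$, the product of the ideal class groups. Therefore $T(C_n)$ is trivial if and only if $h_d=1$ for every divisor $d$ of $n$.

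It then remains to show that the condition ``$h_d=1$ for all $d\mid n$'' is equivalent to the single condition $h_n=1$. One implication is immediate by taking $d=n$. For the converse I would invoke the divisibility $h_d\mid h_n$, valid for cyclotomic fields whenever $d\mid n$, so that $h_n=1$ forces $h_d=1$. I expect this number-theoretic input to be the main point of the argument. I would establish it by proving that the norm map $N_{\bm{Q}(\zeta_n)/\bm{Q}(\zeta_d)}\colon C(\bm{Z}[\zeta_n])\to C(\bm{Z}[\zeta_d])$ is surjective, whence $h_n=1$ gives $C(\bm{Z}[\zeta_d])=0$. Surjectivity follows from class field theory once one checks that $\bm{Q}(\zeta_n)/\bm{Q}(\zeta_d)$ admits no nontrivial unramified subextension, since the image of the norm map corresponds to the maximal unramified subextension cut out inside the Hilbert class field.

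To verify the absence of unramified subextensions I would argue with inertia groups. Writing $\fn{Gal}(\bm{Q}(\zeta_n)/\bm{Q}(\zeta_d))\simeq\prod_{p\mid n}G_p$, where $G_p=\ker\big((\bm{Z}/p^{v_p(n)}\bm{Z})^{\times}\to(\bm{Z}/p^{v_p(d)}\bm{Z})^{\times}\big)$ is the $p$-component, the absolute inertia subgroup at a prime over $p$ is exactly the factor $(\bm{Z}/p^{v_p(n)}\bm{Z})^{\times}$, so the inertia subgroup of the relative extension at such a prime is its intersection with $\prod_{p\mid n}G_p$, namely $G_p$ itself. Thus the inertia subgroups generate the entire Galois group, leaving only the trivial unramified subextension and giving the desired surjectivity. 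Combining the three steps, all $G$-tori over $k$ are stably $k$-rational precisely when $h_d=1$ for every $d\mid n$, precisely when $h_n=1$, which is the assertion.
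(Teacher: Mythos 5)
Your proposal is correct and follows essentially the same route as the paper: both reduce the statement via Theorem \ref{tA4} to showing that $C(\Omega_{\bm{Z}[C_n]})$ vanishes exactly when $h_n=1$, using $\Omega_{\bm{Z}[C_n]}\simeq\prod_{d\mid n}\bm{Z}[\zeta_d]$ so that $C(\Omega_{\bm{Z}[C_n]})\simeq\prod_{d\mid n}C(\bm{Z}[\zeta_d])$. The only difference is that the paper compresses the final step into ``hence the result,'' whereas you correctly make explicit the needed divisibility $h_d\mid h_n$ for $d\mid n$ via surjectivity of the norm map on class groups and the inertia argument showing $\bm{Q}(\zeta_n)/\bm{Q}(\zeta_d)$ has no nontrivial unramified subextension.
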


\begin{proof}
Apply Theorem \ref{tA4}. Since $T(G) \simeq F_G$, it remains to
show that $C(\Omega_{\bm{Z}[G]})=0$. When $G=C_n$, it is not
difficult to verify that the maximal order $\Omega_{\bm{Z}[G]}$ is
isomorphic to $\prod_{d | n} \bm{Z}[\zeta_d]$ \cite[page
243]{CR2}. Hence the result.
\end{proof}

\begin{theorem} \label{tA6}
Let $p$ be an odd prime number, $G=D_p$, and $k$ be a field
admitting a $G$-extension. Then $h_p^{+}=1$ if and only if all the
$G$-tori over $k$ are stably $k$-rational.
\end{theorem}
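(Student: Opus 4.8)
The plan is to deduce the theorem from the Endo--Miyata identification of Theorem \ref{tA4}, following the same strategy as the proof of Theorem \ref{tA5}; the only new ingredient is the computation of the locally free class group of a maximal order in $\bm{Q}[D_p]$.

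First I would translate the rationality statement into the language of $T(G)$. By Theorem \ref{t2.6}(1), a $G$-torus with character module $M$ is stably $k$-rational exactly when $[M]^{fl}$ is permutation, that is, when $[M]=0$ in $T(G)$; here the identity element of $T(G)$ is the class of the permutation lattices, which correspond to the stably rational tori (adjoining a permutation lattice only adds independent variables by Proposition \ref{p6.2}). Since every element of $T(G)$ has the form $[M]$ for some $G$-lattice $M$, the assertion that all $G$-tori over $k$ are stably $k$-rational is equivalent to $T(G)=\{0\}$. By Theorem \ref{tA4} we have $T(G)\simeq C(\Omega_{\bm{Z}[G]})$, so it remains to show that $C(\Omega_{\bm{Z}[G]})=0$ if and only if $h_p^+=1$.

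Next I would write out the Wedderburn decomposition of $\bm{Q}[D_p]$. The dihedral group $D_p$ has exactly two one-dimensional rational representations (trivial and sign) together with a single $(p-1)$-dimensional rational irreducible representation, obtained by combining the $\tfrac{p-1}{2}$ Galois-conjugate two-dimensional complex representations; concretely this is the component $\bm{Q}[G]/\langle f(\sigma)\rangle\simeq \bm{Q}(\zeta_p)\circ H$ of Definition \ref{d3.2}. Its center is $\bm{Q}(\zeta_p+\zeta_p^{-1})$, and its Schur index equals $1$, since the two-dimensional representation is afforded by honest rotation and reflection matrices (equivalently, the crossed product $\bm{Q}(\zeta_p)\circ H$ has a trivial cocycle because $\tau$ is a genuine involution, so it splits as a matrix algebra over the fixed field). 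Hence
\[
\bm{Q}[D_p]\simeq \bm{Q}\times\bm{Q}\times M_2\big(\bm{Q}(\zeta_p+\zeta_p^{-1})\big),
\]
the dimension count $1+1+4\cdot\tfrac{p-1}{2}=2p=|D_p|$ confirming the splitting. The associated maximal order is then $\Omega_{\bm{Z}[D_p]}\simeq \bm{Z}\times\bm{Z}\times M_2(R_0)$, where $R_0=\bm{Z}[\zeta_p+\zeta_p^{-1}]$ is the ring of integers of the real cyclotomic field.

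Finally I would compute the class group. The locally free class group of a maximal order splits over the simple components, so
\[
C(\Omega_{\bm{Z}[D_p]})\simeq C(\bm{Z})\times C(\bm{Z})\times C(M_2(R_0)).
\]
The two factors $C(\bm{Z})$ vanish, and by Morita equivalence $C(M_2(R_0))\simeq C(R_0)$, the ideal class group of $R_0$, whose order is $h_p^+$. Thus $C(\Omega_{\bm{Z}[D_p]})\simeq C(R_0)$ is trivial precisely when $h_p^+=1$, which finishes the proof. The main obstacle, and the only point requiring real care, is the identification of the central simple component as the \emph{split} algebra $M_2(\bm{Q}(\zeta_p+\zeta_p^{-1}))$ of Schur index one, with maximal order $M_2(R_0)$, together with the Morita reduction $C(M_2(R_0))\simeq C(R_0)$; once these structural facts about $\bm{Q}[D_p]$ are in place, the equivalence with $h_p^+=1$ is immediate.
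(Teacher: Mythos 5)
Your proposal is correct, and it shares the paper's overall framework---reduce via Theorem \ref{tA4} to computing the locally free class group $C(\Omega_{\bm{Z}[G]})$ of a maximal order---but it carries out that computation by a genuinely different route. The paper never touches the Wedderburn decomposition: it quotes the surjection $C(\bm{Z}[G])\to C(\Omega_{\bm{Z}[G]})$ and then the (nontrivial) Theorem 50.25 of Curtis--Reiner vol.~2, which gives $C(\Omega_{\bm{Z}[G]})\simeq C(\bm{Z}[G])\simeq C(\bm{Z}[\zeta_p+\zeta_p^{-1}])$ for $G=D_p$; the conclusion then falls out. You instead compute $C(\Omega_{\bm{Z}[G]})$ directly from the rational representation theory of $D_p$: the decomposition $\bm{Q}[D_p]\simeq\bm{Q}\times\bm{Q}\times M_2\bigl(\bm{Q}(\zeta_p+\zeta_p^{-1})\bigr)$ (Schur index one because the crossed product $\bm{Q}(\zeta_p)\circ H$ has trivial cocycle), the splitting of the class group of a maximal order over the simple components, and the Morita identification $C(M_2(R_0))\simeq C(R_0)$. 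This is arguably the more economical argument: Theorem \ref{tA4} already lands you at the maximal order, so you never need the much deeper statement that the kernel group $D(\bm{Z}[D_p])$ vanishes, i.e.\ that $C(\bm{Z}[D_p])$ itself equals $C(R_0)$---that is what the paper's citation buys, but it is not needed here. Two small points of care: a maximal order in $M_2(K_0)$ need not be conjugate to $M_2(R_0)$ when $h_p^+>1$, but the locally free class group is independent of the choice of maximal order in a given semisimple algebra, so your identification stands; and your opening reduction ``all $G$-tori stably $k$-rational $\Leftrightarrow T(G)=0$'' uses that every class in $F_G\simeq T(G)$ has the form $[M]^{fl}$, which is exactly the content of the isomorphism in Definition \ref{dA2}, so that step is also sound.
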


\begin{proof}
Apply Theorem \ref{tA4} again. We will show that
$C(\Omega_{\bm{Z}[G]})=0$.

Note that $C(\bm{Z}[G]) \to C(\Omega_{\bm{Z}[G]})$ is surjective
\cite[page 230, Theorem 49.25]{CR2}. Define $D(\bm{Z}[G])$ by the
exact sequence $0 \to D(\bm{Z}[G]) \to C(\bm{Z}[G]) \to
C(\Omega_{\bm{Z}[G]}) \to 0$ (see \cite[page 234]{CR2}).

By \cite[page 259, Theorem 50.25]{CR2}, we find that
$C(\Omega_{\bm{Z}[G]}) \simeq C(\bm{Z}[G]) \simeq C[\zeta_p +
\zeta_p^{-1}]$. Hence the result.
\end{proof}

\newpage
\renewcommand{\refname}{\centering{References}}

\end{document}